\newtheorem{lemma}{Lemma}[section]
\newtheorem{theorem}[lemma]{Theorem}
\newtheorem{corollary}[lemma]{Corollary}
\newtheorem{proposition}[lemma]{Proposition}
\theoremstyle{definition}
\newtheorem{definition}[lemma]{Definition}
\newtheorem{remark}[lemma]{Remark}
\newtheorem{example}[lemma]{Example}
\theoremstyle{remark}
\newtheorem*{remark*}{Remark}
\newtheorem*{note*}{Note}
\newcommand{\Cone}{\operatorname{Cone}}
\newcommand{\Bl}{\operatorname{Bl}}
\newcommand{\GL}{\operatorname{GL}}
\newcommand{\Spec}{\operatorname{Spec}}
\newcommand{\Proj}{\operatorname{Proj}}
\newcommand{\Hom}{\operatorname{Hom}}
\newcommand{\ns}{\operatorname{ns}}
\newcommand{\DG}{\operatorname{DG}}
\newcommand{\mc}[1]{\mathcal{#1}}
\newcommand{\s}{\sigma}
\newcommand{\CC}{\mathbb{C}}
\newcommand{\Af}{\mathbb{A}}
\newcommand{\VV}{\mathbb{V}}
\newcommand{\ZZ}{\mathbb{Z}}
\newcommand{\RR}{\mathbb{R}}
\newcommand{\QQ}{\mathbb{Q}}
\newcommand{\A}{\mathbb{A}}
\newcommand{\Pro}{\mathbb{P}}
\newcommand{\CCe}{{\CC^*}}
\newcounter{item-counter}
\newcommand{\map}[3]{\xymatrix{#2 \ar[r]^{#1} &#3}}
\newcommand{\mbf}[1]{{\mathbf{#1}}} 
\begin{document}
\title[Partial desingularizations of good moduli spaces]{Partial
  desingularizations of good moduli spaces of Artin toric
  stacks}

\author{Dan Edidin}
\address{Department of Mathematics, University of Missouri-Columbia, Columbia, Missouri 65211}
\email{edidind@missouri.edu}

\author{Yogesh More}
\address{Department of Mathematics, University of Missouri-Columbia, Columbia, Missouri 65211}
\email{yogeshmore80@gmail.com}
\thanks{The first author was partially supported by NSA grant 
H98230-08-1-0059 while preparing this article.}

\begin{abstract}
  Let ${\mathcal X}$ be an Artin stack with good moduli space
  ${\mathcal X}\to M$. We define the Reichstein transform of
  ${\mathcal X}$ relative to a closed substack ${\mathcal C} \subset
  {\mathcal X}$ to be the complement of the strict transform of the
  saturation of ${\mathcal C}$ in the blowup of ${\mathcal X}$ along
  ${\mathcal C}$.  The main technical result of the paper is that the
  Reichstein transform of a toric Artin stack relative to a toric
  substack is again a toric stack. Precisely, the Reichstein transform
  relative to a cone in a stacky fan is the toric stack determined by
  {\em stacky star subdivision}. This leads to our main theorem which
  states that for toric Artin stacks there is a canonical sequence of
  Reichstein transforms that produces a toric Deligne-Mumford
  stack. When the good moduli space of the toric stack is a projective
  toric variety our procedure can be interpreted in terms of Kirwan's
  \cite{Kir:85} partial desingularization of geometric invariant
  theory quotients.
\end{abstract}

\maketitle

\section{Introduction}
If $G$ is a linear algebraic group acting properly on a smooth
algebraic variety $X$ then there is a geometric quotient $X \to X/G$,
where $X/G$ is an algebraic space with only finite quotient
singularities (cf. \cite{KeMo:97,Kol:97}). Unfortunately, if $G$ does
not act properly there is no general method for determining if a
quotient exists, even as an algebraic space. However, if $G$ is
reductive and $X$ is a projective variety then geometric invariant
theory can be used to construct quotients of the open sets $X^{s}
\subset X^{ss}$ of the $G$-stable and semi-stable points. The quotient
$X^{ss}/G$ is projective, but in general highly singular, while the
open subspace $X^{s}/G$ has only finite quotient singularities.

In a landmark paper \cite{Kir:85} Kirwan described, when $X^{s}\neq
\emptyset$,  a systematic sequence of blowups along non-singular $G$-invariant subvarieties that yields a birational $G$-equivariant morphism $f:X' \to X$ such that every semi-stable point (with respect to a suitable linearization) of the $G$-variety $X'$ is stable. 
The quotient $(X')^{ss}/G$ is a
projective variety with only finite
quotient singularities. Furthermore, there is an induced projective birational morphism $\overline{f}:(X')^{ss}/G \to X^{ss}/G$, which is an isomorphism over the open
set $X^{s}/G$. Hence the quotient $(X')^{ss}/G$ may be viewed
as a partial resolution of singularities of the highly singular
quotient $X^{ss}/G$.

A natural problem is to try to understand to what extent Kirwan's
procedure can be replicated for non-projective quotients where the
techniques of geometric invariant theory do not apply. Precisely,
given a smooth $G$-variety $X$ and a good quotient (see Section
\ref{subsec.goodquotient} for definitions) $X \stackrel{q} \to X/G$ we
would like to find a systematic way of producing a birational map $X'
\to X$ such that $G$ acts properly on $X'$ and the induced map of
quotients $X'/G \to X/G$ is proper. More generally suppose ${\mathcal X}$
is a smooth
algebraic stack with a good moduli space (in the sense
of \cite{Alp:08}) ${\mathcal X} \to M$. Is there a systematic way of
producing a smooth separated Deligne-Mumford stack ${\mathcal X}'$ and a
morphism ${\mathcal X'} \to {\mathcal X}$ which is generically an
isomorphism, such that the induced map $M' \to M$ is proper and
birational, where $M'$ is the coarse moduli space of the
Deligne-Mumford stack ${\mathcal X'}$?

The main result of this paper (Theorem \ref{thm.torickirwan})  is to solve this
problem when $\mc{X}$ is an Artin toric stack (as defined by Borisov, Chen and Smith in
\cite{BCS:05}). From our perspective toric stacks are a class of
stacks with good moduli spaces which are not in general geometric
invariant theory quotients.

To prove our result we introduce certain birational transformations of
Artin stacks with good moduli spaces we call Reichstein transforms
(Section \ref{subsec.reichstein}). If ${\mathcal C} \subset {\mathcal X}$
is a closed substack then the {\em Reichstein transform} ${\mathcal
  R}({\mathcal X}, {\mathcal C})$ of ${\mathcal X}$ relative to
${\mathcal C}$ is defined to be the complement of the strict transform
of the saturation of ${\mathcal C}$ (relative to the quotient map
$q\colon {\mathcal X} \to M$) in the blowup of ${\mathcal X}$ along
${\mathcal C}$.  The main technical result of the paper is Theorem
\ref{thm.technical}. It states that the Reichstein transform of a
toric stack along a toric substack produces a toric stack
combinatorially related to the original stack by {\em stacky star
  subdivision}.  Successively applying Theorem \ref{thm.technical}
yields the procedure for obtaining a toric Deligne-Mumford stack.

Finally, in Section \ref{sec.divreich} we discuss the relationship
between divisorial Reichstein transformations and changes of
linearizations in geometric invariant theory. We prove (Theorem
\ref{thm.divsub}) that every toric stack contains a separated
Deligne-Mumford substack which can be obtained via a (non-canonical)
sequence of Reichstein transformations relative to divisors. We also
give an example (Example \ref{ex.secretchamber}) of a toric Artin
stack which has a projective good moduli space (and so is in fact a
geometric invariant theory quotient) but which contains a complete open
Deligne-Mumford toric stack whose moduli space is a complete
non-projective toric variety.

In a subsequent paper we will study the behavior of Reichstein transforms
on arbitrary quotient stacks with good moduli spaces.

{\bf Acknowledgments:} The authors are grateful to Jarod Alper and Johan de Jong for helpful discussions while preparing this article.

\section{Good quotients and Reichstein transformations}

\subsection{Standing assumptions}
Throughout this paper we work over the field $\CC$ of complex numbers.
All algebraic groups are assumed to be linear, that
is, isomorphic to subgroups of $\GL_n(\CC)$ for some $n$. 
Because we work in characteristic 0, all reductive groups are linearly reductive - that is every representation decomposes into a direct sum of irreducibles.

\subsection{Good quotients and good moduli spaces} \label{subsec.goodquotient}

Let $G$ be an
algebraic group acting on an algebraic variety $X$ (or more generally
an algebraic space). Following \cite{Alp:08} we make the following definition:
\begin{definition}
A map $q \colon X \to M$ with $M$ an algebraic space is called a {\em
  good quotient} if the following conditions are satisfied:

(i) The functor $G-{\mathcal Qcoh} X \to {\mathcal Qcoh} M$, ${\mathcal
  F} \mapsto (q_* {\mathcal F})^G$ is exact where $G-{\mathcal Qcoh}
X$ is the category of $G$-linearized quasi-coherent sheaves on $X$.

(ii) $(q_*{\mathcal O}_X)^G = {\mathcal O}_M$.

The map $q \colon X \to M$ is a {\em good geometric quotient} if, in addition, the orbits of closed points are closed.
\end{definition}

\begin{remark} 
Condition (i) implies that the quotient map is affine, and if $G$ is
linearly reductive it is equivalent to the quotient map being
affine. However, if $G$ is not linearly reductive then the quotient
map may be affine without condition (i) being satisfied.
\end{remark}

In \cite{Alp:08} Alper generalized the concept of good quotient to Artin stacks.
\begin{definition} \cite{Alp:08}
Let ${\mathcal X}$ be an Artin stack. A map $q \colon {\mathcal X} \to M$
with $M$ an algebraic space is a {\em good moduli space} for ${\mathcal X}$
if

i) The map $q \colon {\mathcal X} \to M$ is cohomologically affine. That is
the functor $q_* \colon {\mathcal Q}coh\;{\mathcal X} \to {\mathcal Q}\;coh M$ is exact.

ii) $q_* {\mathcal O_{\mathcal X}} = {\mathcal O}_M$.
\end{definition}

Good quotients and moduli spaces enjoy a number of natural
properties. The following theorem of Alper summarizes these
properties, and shows that Alper's notion of good quotient is
equivalent to other definitions in the literature.
\begin{theorem} \label{thm.alp} \cite[Theorem 4.16, Theorem 6.6]{Alp:08}
If $q\colon {\mathcal X} \to M$ is the good moduli space of an Artin stack then

(i) $q$ is surjective and universally closed.

(ii) If $Z_1, Z_2$ are closed substacks then $q(Z_1 \cap Z_2) = q(Z_1)
\cap q(Z_2)$. In particular the images of two disjoint closed
substacks are disjoint.

(iii) $q$ is a universal categorical quotient in the category of
algebraic spaces.
\end{theorem}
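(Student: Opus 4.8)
\emph{Proof plan.} The plan is to work from the two defining properties of a good moduli space $q\colon\mathcal{X}\to M$ — exactness of $q_*$ on quasi-coherent sheaves and $q_*\mathcal{O}_{\mathcal{X}}=\mathcal{O}_M$ — after two reductions. First, all three assertions are local on $M$ and the property of being a good moduli space restricts to preimages of opens, so one may assume $M=\Spec A$ with $A=\Gamma(\mathcal{X},\mathcal{O}_{\mathcal{X}})$. Second, I would establish that a good moduli space morphism is stable under \emph{arbitrary} base change $M'\to M$; this reduces the occurrences of ``universal'' in (i) and (iii) to their non-universal forms. Over affines $\Spec A'\to\Spec A$ this is elementary: the projection $\pi\colon\mathcal{X}':=\mathcal{X}\times_A A'\to\mathcal{X}$ is affine, so $H^{>0}(\mathcal{X}',-)$ vanishes on quasi-coherent sheaves — giving cohomological affineness of $\mathcal{X}'\to\Spec A'$ — and $\pi_*\mathcal{O}_{\mathcal{X}'}=\mathcal{O}_{\mathcal{X}}\otimes_A A'$; presenting $A'$ as a cokernel of free $A$-modules and using that $q_*$ is exact and commutes with direct sums then gives $\Gamma(\mathcal{X}',\mathcal{O}_{\mathcal{X}'})=A'$. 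Throughout, for a closed substack $Z$ with ideal sheaf $\mathcal{I}$, applying the exact functor $q_*$ to $0\to\mathcal{I}\to\mathcal{O}_{\mathcal{X}}\to\mathcal{O}_Z\to 0$ shows $q_*\mathcal{I}\subseteq\mathcal{O}_M$ is a quasi-coherent ideal and $q_*\mathcal{O}_Z=\mathcal{O}_M/q_*\mathcal{I}$; I write $V(q_*\mathcal{I})$ for the corresponding closed subscheme of $M$.

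For (i): given $m\in M$, the fiber $\mathcal{X}_m=\mathcal{X}\times_M\Spec\kappa(m)$ is a good moduli space over $\kappa(m)$ by the base-change fact, hence nonempty since an empty stack has $\Gamma(\mathcal{O})=0\neq\kappa(m)$; thus $q$ is surjective. For closedness it suffices (again by base change) to show $q$ is closed, and I claim $q(Z)=V(q_*\mathcal{I})$ as sets. The inclusion $q(Z)\subseteq V(q_*\mathcal{I})$ holds because $q_*\mathcal{O}_Z$ vanishes near any point outside $V(q_*\mathcal{I})$, forcing $Z$ to miss the preimage of a neighbourhood of that point; for the reverse inclusion, $Z\to V(q_*\mathcal{I})$ is itself a good moduli space — it is cohomologically affine by cancellation (if $g\circ f$ is cohomologically affine and $g$ is affine, then $f$ is, because $g_*$ is faithfully exact), and $q_*\mathcal{O}_Z=\mathcal{O}_{V(q_*\mathcal{I})}$ by construction — so the surjectivity already proved applies. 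For (ii), let $\mathcal{I}_j$ be the ideal of $Z_j$, so $q(Z_j)=V(q_*\mathcal{I}_j)$ by (i) and $Z_1\cap Z_2=V(\mathcal{I}_1+\mathcal{I}_2)$; applying the exact functor $q_*$ to the surjection $\mathcal{I}_1\oplus\mathcal{I}_2\twoheadrightarrow\mathcal{I}_1+\mathcal{I}_2$ gives $q_*(\mathcal{I}_1+\mathcal{I}_2)=q_*\mathcal{I}_1+q_*\mathcal{I}_2$, whence
\[
q(Z_1\cap Z_2)=V\bigl(q_*\mathcal{I}_1+q_*\mathcal{I}_2\bigr)=V(q_*\mathcal{I}_1)\cap V(q_*\mathcal{I}_2)=q(Z_1)\cap q(Z_2),
\]
and the disjointness statement is the case $Z_1\cap Z_2=\emptyset$.

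For (iii): by (ii) the fiber $\mathcal{X}_m$ contains no two disjoint nonempty closed substacks, hence — with the standing finiteness hypotheses — has a unique closed point, lying in the closure of every point, and it is connected because $\Gamma(\mathcal{O}_{\mathcal{X}_m})=\kappa(m)$. Invoking the local structure of good moduli spaces over a field, one concludes that every morphism from $\mathcal{X}_m$ to an algebraic space is constant; therefore a morphism $f\colon\mathcal{X}\to Y$ to an algebraic space is constant on the fibers of $q$, so every preimage $f^{-1}(U)$ of an open is $q$-saturated. Covering $Y$ by affines $Y_i=\Spec R_i$, the sets $V_i:=q(f^{-1}(Y_i))$ are then open (their complements are images of saturated closed substacks, closed by (i)), and over each $q^{-1}(V_i)$ the restriction of $f$ corresponds to a ring homomorphism $R_i\to\Gamma(q^{-1}(V_i),\mathcal{O})=\mathcal{O}_M(V_i)$, i.e.\ to a morphism $V_i\to Y$. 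These agree on overlaps and glue to a morphism $\overline f\colon M\to Y$ with $\overline f\circ q=f$, which is unique because $q$ is surjective (pinning $\overline f$ down on points) and universally submersive (so the set-map $\overline f$ is continuous, hence a morphism since it is locally one); the ``universal'' clause again follows from the base-change fact.

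I expect the two genuinely non-formal inputs to be the main obstacles: the stability of the good moduli space property under non-flat base change (everything in (i) and (ii) being a formal consequence of the exactness of $q_*$ once this is available), and, in (iii), the structural statement that a cohomologically affine stack whose ring of global functions is a field admits only constant maps to algebraic spaces — this is the point where the local geometry of good moduli spaces, rather than sheaf-cohomological formalism, is required.
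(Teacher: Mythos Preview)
The paper does not prove this theorem at all: it is stated with the citation \cite[Theorem 4.16, Theorem 6.6]{Alp:08} in the header and is followed immediately by a remark, with no proof environment. It is quoted from Alper's paper as background, so there is no ``paper's own proof'' to compare your proposal against.

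That said, your sketch is broadly the right shape for how Alper proves these statements, and you have correctly identified the two genuinely nontrivial inputs. A couple of points deserve tightening. First, your base-change argument is written only for affine $M'\to M$; to get ``universally closed'' and ``universal categorical quotient'' you need the good moduli space property to persist under arbitrary base change by algebraic spaces, and the passage from the affine case to the general case is not automatic (Alper handles this separately). Second, in (iii) the assertion that any morphism from $\mathcal{X}_m$ to an algebraic space is constant is exactly the substantive step; your phrase ``invoking the local structure of good moduli spaces over a field'' is a placeholder rather than an argument, and in Alper's treatment this is where real work happens (one shows the unique closed point has reductive stabilizer and uses that to factor maps through the residue field). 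Finally, in your proof that $q(Z)=V(q_*\mathcal{I})$, the cancellation claim ``if $g\circ f$ is cohomologically affine and $g$ is affine then $f$ is'' is correct but uses that affine morphisms have faithfully exact pushforward, which you should state; and the step ``$q_*\mathcal{O}_Z$ vanishes near any point outside $V(q_*\mathcal{I})$, forcing $Z$ to miss the preimage'' silently uses that $q_*$ commutes with flat base change (localization), which again follows from exactness but should be said.
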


\begin{remark}
If $G$ acts on $X$ and there exists a good quotient $X \to M$, then with Alper's definition, the induced map $[X/G] \to M$ is the good moduli space of the quotient stack $[X/G]$. Theorem \ref{thm.alp}
implies that for a linearly reductive group $G$, Alper's notion of a good quotient is equivalent the definition
given by Seshadri \cite[Definition 1.5]{Ses:72}.
\end{remark}

\subsection{Geometric invariant theory}
Let $G$ be a reductive group acting on scheme $X$ and let $L$ be a $G$-linearized line bundle on $X$.
\begin{definition} \cite[Definition 1.7]{MFK:94}
(i) A point $x \in X$ is {\em semi-stable} (with respect to $L$) if 
there is a section $s \in H^0(X,L^n)^G$ such that $s(x) \neq 0$
and $X_s$ is affine.

(ii) A point $x \in X$ is {\em stable} if it is
semi-stable, has finite stabilizer, and the action of $G$ on $X_s$ is closed.
\end{definition}
\begin{remark}
If $L$ is assumed to be ample then the condition that $X_s$ be affine is automatic.

Our use of the term stable differs slightly from the terminology used
in \cite{MFK:94} in that we require that the stabilizer of a stable
point be finite. In \cite{MFK:94} such points are referred to as
properly stable. 

Note that there can be strictly semi-stable points
which have finite stabilizer. For example, consider
the $\CC^*$ action on $\A^4$ given by $t\cdot(x,y,z,w) = (tx,t^{-1}y,
tz,t^{-1}w)$. Since any constant function is $\CC^*$-invariant, all
points are semi-stable with respect to the trivial
linearization, and every point except the origin has trivial stabilizer.
However, the stable locus is the complement of the
linear subspaces $V(x,z)$ and $V(y,w)$. We will mention other aspects of this action in Example \ref{ex.stack3}. 
\end{remark}

The main result of geometric invariant theory can be stated in the
language of good quotients as:
\begin{theorem} \cite{MFK:94}
Let $G$ be a reductive algebraic group acting on a scheme $X$ and let
$L$ be a line bundle on $X$ linearized with respect to the action of
$G$. If $X^{ss}$ and $X^s$ denote the open subsets of semi-stable and
stable points respectively then a good quotient $X^{ss}/G$ exists as a
quasi-projective scheme and contains as an open set a good geometric
quotient $X^{s}/G$. If $X$ is complete and $L$ is ample
then the quotient $X^{ss}/G$
is also projective.
\end{theorem}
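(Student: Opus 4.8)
The plan is to reproduce Mumford's construction, reducing the global assertion to an affine one via the invariant sections that define semistability, and then gluing. First I would note that, by the definition of semistability, $X^{ss}$ is the union of the $G$-invariant affine open subsets $X_s$, where $s$ ranges over homogeneous invariant sections in $\bigoplus_{n>0} H^0(X,L^n)^G$. It therefore suffices to build a good quotient of each $X_s$ and to check that these quotients are canonically identified on overlaps, so that they glue to a scheme receiving a good quotient map from $X^{ss}$.

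For the affine piece, write $X_s = \Spec A$ with $A$ a finitely generated $\CC$-algebra carrying a rational $G$-action. Since $G$ is reductive, hence linearly reductive in characteristic $0$, the invariant subring $A^G$ is finitely generated over $\CC$ (Hilbert--Nagata), and I would set $X_s/G := \Spec A^G$ with $q_s\colon X_s \to X_s/G$ induced by $A^G \into A$. Linear reductivity furnishes the $A^G$-linear Reynolds operator $A \to A^G$; since localization is exact this shows that taking $G$-invariants of $G$-linearized quasi-coherent sheaves on $X_s$ is exact and that $(q_{s*}\mathcal{O}_{X_s})^G = \mathcal{O}_{X_s/G}$, so $q_s$ is a good quotient. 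For gluing, given another invariant section $s'$ a suitable invariant ratio $f \in A^G$ cuts out $X_s \cap X_{s'}$ inside $X_s$, and because localization commutes with invariants one has $(A^G)_f = (A_f)^G$; hence the two candidate quotients of $X_s \cap X_{s'}$ agree. These identifications are compatible on triple overlaps, and the resulting scheme $X^{ss}/G$ carries a good quotient map $q\colon X^{ss}\to X^{ss}/G$, the defining properties being local.

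To see $X^{ss}/G$ is quasi-projective I would observe that a finite generating set of invariant sections gives a finite affine cover by the $X_{s_i}/G$ together with compatible trivializations, exhibiting $X^{ss}/G$ as an open subscheme of $\Proj\big(\bigoplus_{n\ge 0} H^0(X,L^n)^G\big)$; this graded ring is finitely generated (again by reductivity), so its $\Proj$ is projective over $\Spec$ of its degree-zero part. When $X$ is complete and $L$ is ample the degree-zero part is $\CC$, so $X^{ss}/G$ is in fact all of this $\Proj$ and is projective. Finally, for the stable locus, one shows $X^s$ is a saturated open subset of $X^{ss}$: each fiber of $q$ contains a unique closed orbit, and $x$ is stable exactly when this closed orbit is $G\cdot x$ itself and has finite stabilizer, so $X^s = q^{-1}(q(X^s))$ with $q(X^s)$ open by upper semicontinuity of stabilizer dimension. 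Setting $X^s/G := q(X^s)$, the restriction $X^s \to X^s/G$ is a good quotient all of whose orbits are closed, hence a good geometric quotient; the injectivity of $q$ on orbits over $X^s/G$ comes from the fact that disjoint closed invariant subsets have disjoint images, which is Theorem \ref{thm.alp}(ii) (or its classical ancestor).

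The main obstacle is the finite generation of the ring of invariants --- Hilbert's fourteenth problem --- which is exactly where reductivity of $G$ is essential and which fails in general; everything else is a careful but formal gluing argument together with the orbit-closure analysis identifying the stable locus as a saturated open subset.
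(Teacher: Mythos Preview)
The paper does not give a proof of this theorem at all: it is simply quoted from \cite{MFK:94} as background, with no argument supplied. Your sketch is the standard Mumford construction---cover $X^{ss}$ by invariant affines $X_s$, take $\Spec A^G$ using reductivity for finite generation and the Reynolds operator for exactness, glue via compatibility of invariants with localization, and identify the result with (an open subscheme of) $\Proj \bigoplus_n H^0(X,L^n)^G$---and is correct in outline. There is nothing to compare against in the paper itself.
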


\subsection{Reichstein transforms} \label{subsec.reichstein}
Let $G$ be an algebraic group acting on a scheme (or algebraic space)
$X$ and let $q \colon X \to M$ be a good quotient.
\begin{definition} \label{def.reich} Let $C$ be a closed $G$-invariant
  subscheme of $X$ and let $\tilde{C} = q^{-1}(q(C))$ be the
  saturation of $C$ relative to the quotient map.  The {\em Reichstein
    transform}, ${\mathcal R}(X,C)$ of $X$ relative to $C$ is defined
  to $(\Bl_C X) \smallsetminus (\tilde C)'$ where $\tilde{C}'$ is the
  strict transform of $\tilde{C}$ in the blow-up $\Bl_C X$ of $X$
  along $C$.

More generally if ${\mathcal X}$ is an Artin stack with good moduli
space
$q \colon {\mathcal X} \to M$ and ${\mathcal C} \subset \mc{X}$ is a closed substack
then we define the Reichstein transform ${\mathcal R}(\mc{X}, \mc{C})$
to be $(\Bl_{{\mathcal C}} {\mathcal X}) \smallsetminus \tilde{{\mathcal
    C}}'$
where $\tilde{{\mathcal C}} = q^{-1}(q({\mathcal C}))$ and $'$
indicates strict transform.

If $C$ (resp. ${\mathcal C}$) is a Cartier divisor then ${\mathcal R}(X,C)$
(resp. ${\mathcal R}({\mathcal X}, {\mathcal C})$) is called a {\em divisorial
Reichstein transform}. 
\end{definition}
\begin{remark}
A closed subvariety $C \subset X$ is saturated if $\tilde{C} = C$. If
$C$ is saturated then the Reichstein transform is just the 
blow-up $\Bl_C X$. When $q \colon X \to M$ is a good geometric quotient then
every closed subset is saturated. At the other extreme, if $\tilde{C}
= X$ then ${\mathcal R}(X,C)$ is empty.
\end{remark}
\begin{remark}
If $C$ (resp. ${\mathcal C}$) is a Cartier divisor then ${\mathcal R}(X,C)$
(resp. ${\mathcal R}({\mathcal X}, {\mathcal C})$) is open  in $X$
(resp. ${\mathcal X}$). Indeed it is the complement of the closure of
 $\tilde{C}\smallsetminus C $ (resp. the closure of 
$\tilde{{\mathcal C}}\smallsetminus {\mathcal C}$).
\end{remark}

The reason for our terminology comes from a theorem of Reichstein in
geometric invariant theory.
\begin{theorem}(cf. \cite[ Theorem 2.4]{Rei:89})
Let $X$ be a smooth projective variety and let $C \subset X$
be a smooth subvariety. Let $L$ be a $G$-linearized line bundle and
let $X^{ss}$ be the set of $L$-semi-stable points. Then, for a suitable
linearization of the $G$-action on $\Bl_C X$, we have $(\Bl_C X)^{ss} = {\mathcal R}(X^{ss},
C \cap X^{ss})$. In particular, a good quotient
of ${\mathcal R}(X^{ss}, C \cap X^{ss})/G$ exists as projective variety.
\end{theorem}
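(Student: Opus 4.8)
We take $L$ ample and $C$ to be $G$-invariant, as the statement implicitly requires. Write $\pi\colon\widetilde X:=\Bl_C X\to X$ for the blow-up and $E$ for its exceptional divisor; since $C$ is $G$-invariant, $E$ is a $G$-invariant Cartier divisor, so $\mathcal{O}_{\widetilde X}(E)$ carries the canonical $G$-linearization for which the section of $\mathcal{O}_{\widetilde X}(E)$ cutting out $E$ is invariant. The plan is to take as the ``suitable linearization'' the $G$-linearized line bundle
\[
M_d\;:=\;\pi^{*}L^{\otimes d}\otimes\mathcal{O}_{\widetilde X}(-E),
\]
which is ample for $d\gg 0$ because $L$ is ample and $\mathcal{O}_{\widetilde X}(-E)$ is $\pi$-ample. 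Since $C$ is smooth, $\pi_{*}\mathcal{O}_{\widetilde X}(-nE)=\mathcal{I}_C^{\,n}$, so the projection formula gives, for every $n\ge 1$, a $G$-equivariant identification
\[
H^{0}\!\bigl(\widetilde X,\,M_d^{\otimes n}\bigr)\;=\;\bigl\{\,\sigma\in H^{0}(X,L^{\otimes nd})\ :\ \ord_C\sigma\ge n\,\bigr\},\qquad \sigma\leftrightarrow\pi^{*}\sigma .
\]
Taking $G$-invariants, $p\in\widetilde X$ is $M_d$-semi-stable exactly when some $G$-invariant $\sigma$ with $\ord_C\sigma\ge n$ has $\pi^{*}\sigma(p)\neq 0$: off $E$ this reads $\sigma(\pi(p))\ne 0$, and at $p\in E=\mathbb{P}(N_{C/X})$ it reads $\ord_C\sigma=n$ together with the nonvanishing at $p$ of the initial form of $\sigma$ along $C$. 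Replacing $C$ by the union of its components meeting $X^{ss}$ (which changes neither side once $d\gg 0$) I may also assume $C\cap X^{ss}$ is dense in $C$.

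I would first prove $(\Bl_C X)^{ss}_{M_d}\subseteq\mathcal{R}(X^{ss},C\cap X^{ss})$. Writing $\widetilde C:=q^{-1}q(C\cap X^{ss})$, so that $\mathcal{R}(X^{ss},C\cap X^{ss})=\pi^{-1}(X^{ss})\smallsetminus\widetilde C{}'$, let $p$ be $M_d$-semi-stable and $\sigma$ a witness as above. Because the zero locus of a $G$-invariant section of a power of $L$ is $q$-saturated and $\sigma$ vanishes on $C$, it vanishes on $\widetilde C$; hence $\pi^{*}\sigma$ vanishes along the strict transform $\widetilde C{}'$, so $p\notin\widetilde C{}'$. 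It remains to see $\pi(p)\in X^{ss}$. If $p\notin E$ this is immediate from $\sigma(\pi(p))=\pi^{*}\sigma(p)\ne 0$. If $p\in E$, so $x:=\pi(p)\in C$, I would use the Hilbert--Mumford criterion together with the linearity of the numerical invariant in the line bundle,
\[
\mu^{M_d}(p,\lambda)\;=\;d\,\mu^{L}(x,\lambda)\;+\;b(p,\lambda),\qquad b(p,\lambda):=\mu^{\mathcal{O}(-E)}(p,\lambda).
\]
The correction term $b(p,\lambda)$ is (up to sign) the $\lambda$-weight of a line in the normal bundle $N_{C/X}$ at $\lim_{t\to 0}\lambda(t)\cdot p$; it is locally constant on the $\lambda$-fixed locus of $\widetilde X$, so combined with the Hesselink--Kempf stratification of the unstable locus (finitely many strata, each destabilized by a conjugate of one of finitely many one-parameter subgroups) it satisfies a bound $|b|\le B$ uniform in $p$ and in the one-parameter subgroups that can occur. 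If $x$ were $L$-unstable, a destabilizing $\lambda$ would have $\mu^{L}(x,\lambda)\le-1$ and hence $\mu^{M_d}(p,\lambda)\le-d+B<0$ for $d>B$, contradicting $M_d$-semi-stability; so $x\in X^{ss}$.

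For the reverse inclusion I would take $p\in\pi^{-1}(X^{ss})\smallsetminus\widetilde C{}'$ and produce an invariant section not vanishing at $p$. If $p\notin E$, then $x:=\pi(p)\in X^{ss}$ and $q(x)\notin q(C\cap X^{ss})$; since $q$ is universally closed (Theorem~\ref{thm.alp}) and $X^{ss}/\!/G$ is projective, $q(C\cap X^{ss})$ is a closed subset of $X^{ss}/\!/G$ missing $q(x)$, so Serre vanishing for the ample line bundle on $X^{ss}/\!/G$ descending a power of $L$ yields a $G$-invariant $\tau\in H^{0}(X,L^{\otimes e}\otimes\mathcal{I}_C)$ with $\tau(x)\ne 0$; a suitable power of $\tau$, multiplied by $G$-invariant sections of powers of $L$ nonvanishing at $x$ to fix the degree, is a $G$-invariant element of $H^{0}(X,L^{\otimes nd}\otimes\mathcal{I}_C^{\,n})$ nonzero at $p$. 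If $p\in E$, so $x:=\pi(p)\in C\cap X^{ss}$ and $p\notin\widetilde C{}'$, I would run the same construction on the projective $G$-variety $C$ (with linearization $L|_C$) to get a $G$-invariant section of $L^{\otimes nd}$ vanishing to order exactly $n$ along $C$ whose initial form is nonzero at $p$; the hypothesis $p\notin\widetilde C{}'$ is precisely what makes this possible once $d$, hence $n$, is large. This gives $(\Bl_C X)^{ss}_{M_d}=\mathcal{R}(X^{ss},C\cap X^{ss})$, and since $\widetilde X$ is projective with $M_d$ ample, geometric invariant theory produces the good quotient $\mathcal{R}(X^{ss},C\cap X^{ss})/G=(\Bl_C X)^{ss}_{M_d}/G$ as a projective variety.

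The hard part will be the step in the first inclusion showing that a semi-stable point of the exceptional divisor maps into $X^{ss}$: this is exactly where the numerical criterion and the uniform bound on $b(p,\lambda)=\mu^{\mathcal{O}(-E)}(p,\lambda)$ enter, and it is what forces $d\gg 0$ rather than an arbitrary linearization. When $C\subseteq X^{ss}$ --- the situation arising in Kirwan's algorithm --- that step is vacuous and the argument is considerably simpler.
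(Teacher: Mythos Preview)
The paper does not give its own proof of this theorem: it is stated with the attribution ``(cf.\ \cite[Theorem 2.4]{Rei:89})'' purely to motivate the terminology \emph{Reichstein transform}, and the text moves immediately on to toric stacks. So there is no in-paper proof to compare against; the result is simply quoted from Reichstein.

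That said, your sketch follows the standard line of argument one finds in Reichstein's and Kirwan's work: linearize by $M_d=\pi^*L^{\otimes d}\otimes\mathcal O(-E)$ for $d\gg0$, identify invariant sections of $M_d^{\otimes n}$ with invariant sections of $L^{\otimes nd}$ vanishing to order $\ge n$ along $C$, and use the Hilbert--Mumford criterion together with a uniform bound on the exceptional correction $\mu^{\mathcal O(-E)}(p,\lambda)$ to force $\pi(p)\in X^{ss}$. The overall strategy is sound. Two places in your sketch would need real work to become a proof rather than a plan. First, the reverse inclusion on the exceptional divisor: producing a $G$-invariant section of $L^{\otimes nd}$ with $\ord_C=n$ and prescribed initial form nonvanishing at a given $p\in E$ is not just ``the same construction on $C$''; one must show the relevant restriction map from $H^0(X,L^{\otimes nd}\otimes\mathcal I_C^{\,n})^G$ to invariant sections of $\operatorname{Sym}^n(N_{C/X}^\vee)\otimes L^{\otimes nd}|_C$ hits enough of the target, which is where Reichstein's argument does genuine work. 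Second, your uniform bound on $b(p,\lambda)$ via the Hesselink--Kempf stratification is morally right but, as written, only controls the destabilizing $\lambda$'s for the $L$-unstable locus of $X$, whereas you need the bound over all one-parameter subgroups (or at least over a set sufficient for Hilbert--Mumford on $\widetilde X$); the cleaner route is to bound $|b(p,\lambda)|$ by the weights of $G$ on $N_{C/X}$, which are finite in number and independent of $d$.
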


\section{Toric stacks}
The goal of this section is to define toric stacks. Toric stacks were
originally 
defined in \cite{BCS:05} using {\em stacky fans}. Iwanari \cite{Iwa:09}
and subsequently Fantechi, Mann and Nironi \cite{FMN:07}
gave an  intrinsic definition of Deligne-Mumford toric stacks and
showed the relationship between simplicial stacky fans and toric
Deligne-Mumford stacks. (Note that the toric variety does not uniquely
determine a stacky fan). In this article we are mainly interested in
toric Artin stacks which we define to be Artin stacks associated to possibly
non-simplicial stacky fans. We do not give an intrinsic definition in
this case. Our definition of stacky fan  is less general than the one given
by Satriano in \cite{Sat:09}.

\subsection{Toric notation}\label{section.toric}
We establish the notation that we will use for toric varieties. A
basic reference is the book by Fulton \cite{Ful:93}. Let $N$ be a
lattice of rank $r$ and let $M = \Hom(N,\ZZ)$ be the dual lattice.
Given a fan $\Sigma \subset N_{\RR} = \RR^r$ we denote the associated
toric variety by 
$X= X(\Sigma)$. Let
$\Sigma(1)$ denote the rays in $\Sigma$. We assume throughout that $\Sigma(1)$ spans $N_\RR$.
Given a cone $\s \in \Sigma$, we denote by $X_\s \subseteq X$ the
affine open set associated to $\s$. By definition $X_\sigma =
\Spec\CC[\sigma^\vee \cap M]$ and the open sets $X_\s$ form an affine cover
of the toric variety $X(\Sigma)$. Likewise let $\gamma_\s$ be the
distinguished point of $X_\s$ corresponding to the irrelevant ideal of
the $M$ graded ring $\CC[\sigma^\vee \cap M]$. The notation $V(\s)$ described next will come up often.
\begin{definition}\label{def.vs}
If $\s$ is a cone let $O(\s)$ denote the orbit of $\gamma_\s$ under the torus $T_N$ of $X$, and let $V(\s)=\overline{O(\s)}$ be its closure in $X$.
\end{definition}

The orbit-cone correspondence states 
\begin{equation}\label{eq.orbit-cone-corr}
V(\s)=\coprod_{\tau\ contains\ \s\ as\ a\  face} O(\tau).
\end{equation}

We also recall some facts about toric morphisms. Let $N'$ be a lattice
and let $\Sigma'$ be a fan in ${N'}_\RR$.  A map of lattices
$\overline{\phi}:N' \to N$ is said to be compatible with $\Sigma'$ and
$\Sigma$ if for every cone $\s'\in \Sigma'$, there is a cone $\s\in
\Sigma$ such that $\overline{\phi}_\RR(\s') \subseteq \s$ (where
$\overline{\phi}_\RR: (N')_\RR \to N_\RR$ is the map induced by
$\overline{\phi}$). Such a compatible map $\overline{\phi}$ induces a
toric morphism $\phi:X(\Sigma') \to X(\Sigma)$. For $\s' \in
\Sigma'$, if $\s$ is the minimal cone in $\Sigma$ such that
$\overline{\phi}_\RR(\s') \subseteq \s$, then
$\phi(\gamma_{\s'})=\gamma_\s$ and $\phi(O(\s'))\subseteq O(\s)$
(\cite[Lemma 3.3.21a]{CLS:10}).

Suppose $\overline{\phi}$ has finite cokernel, or equivalently
$\phi_\RR$ is surjective. Then $\overline{\phi}\otimes\CC^* :N' \otimes
\CC^* \to N \otimes \CC^* $ is surjective, and hence
$\phi\vert_{T_{N'}}:T_{N'} \to T_N$ is surjective. In this case,
$\phi(O(\s'))=\phi(T_{N'}\cdot\gamma_{\s'})=T_N\cdot\gamma_\s=O(\s)$.

\subsection{Cox construction of toric varieties as
  quotients}\label{cox-construction}
As a first step in our discussion of toric stacks we recall Cox's
\cite{Cox:95} or \cite[Thm. 5.1.11]{CLS:10} description of $X(\Sigma)$ as
a quotient of an open subset of $\A^n$ by a diagonalizable group.  

\begin{definition}[Cox construction of $X(\Sigma)$]\label{def.cox-construction}
  Given a fan $\Sigma \subseteq N$, let $\widetilde{N}=\ZZ^n$, where
  $n = | \Sigma(1)|$ and let $\{e_\rho \mid \rho \in \Sigma(1)\}$ be
  the standard basis of $\widetilde{N}$. For each cone $\s \in
  \Sigma$, define $\widetilde{\s}=\Cone(e_\rho \mid \rho\in\s(1))
  \subseteq \RR^{n}$. These cones form a fan
  $\widetilde{\Sigma}=\{\widetilde{\s} \mid \s \in \Sigma \}$. Define
  $\overline{q}:\widetilde{N} \to N$ by $\overline{q}(e_\rho)=u_\rho$,
  where $u_\rho$ is the primitive vector along the ray $\rho \in
  \Sigma(1)$. Then $\overline{q}$ is compatible with
  $\widetilde{\Sigma}$ and $\Sigma$, and so induces a toric morphism
  $q:X(\widetilde{\Sigma})\to X(\Sigma)$, called the Cox construction
  of $X(\Sigma)$. Cox \cite[Thm 2.1]{Cox:95} showed that $q$ is a good
  quotient for the group $G_\Sigma = \ker(T_{\widetilde{N}} \to T_N)$, and if $\Sigma$ is a simplicial fan, then $q$ is a good
  geometric quotient.
\end{definition}

A more geometric description of $X(\widetilde{\Sigma})$ is as follows. The homogeneous coordinate ring of $X(\Sigma)$ is the ring $S=\CC[x_\rho \mid \rho \in \Sigma(1)]$. For a cone $\sigma\in
\Sigma$, let $x^{\hat{\sigma}}=\Pi_{\rho \notin \sigma(1)} x_\rho$ be
the corresponding monomial. Let $B=B_\Sigma \subset S$ be the ideal of $S$
generated by the $x^{\hat{\sigma}}$, i.e. $B=\langle x^{\hat{\sigma}}
\mid \sigma \in \Sigma \rangle$. Let $Z_\Sigma \subset
\A^{n}$ be the subvariety $\mathbb{V}(B)$ associated to $B$.
By \cite[Proposition 5.1.9a]{CLS:10} we have
that $X(\widetilde{\Sigma})=\A^{n}\smallsetminus Z_\Sigma.$

Define a map 
$M \to \ZZ^{n}$ by $m \mapsto \left(\langle m,
  u_\rho \rangle\right)_{\rho \in \Sigma(1)}$ where $u_\rho$ is a
primitive generator for the ray $\rho$. This map induces a short exact sequence of diagonalizable groups
\begin{equation} \label{seq.diag}
1 \to G_\Sigma \to (\CC^*)^{\Sigma(1)} \to T \to 1
\end{equation}
$T=\Hom(M, \CC^*)$ is the torus acting on the toric variety $X(\Sigma)$.

Since $(\CC^*)^{\Sigma(1)}$ acts naturally on $\A^n$, we have an induced action
of $G_\Sigma$  on $\A^{n}$.

\begin{theorem}\cite[Thm 2.1]{Cox:95}\label{cox}
The toric variety $X(\Sigma)$ is naturally isomorphic to the
good quotient of $X(\widetilde{\Sigma}) = \A^{n} \smallsetminus
Z_\Sigma$ by $G_\Sigma$. 
Also, this quotient is a geometric quotient if and only if $\Sigma$ is simplicial.
\end{theorem}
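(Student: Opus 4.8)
This is essentially Cox's original argument, and I would organize it as follows: reduce to an affine computation over a cover of $X(\Sigma)$, carry out that computation, and glue; then dispose of the ``geometric iff simplicial'' clause separately. First, cover $X(\Sigma)$ by the affine opens $X_\sigma=\Spec\CC[\sigma^\vee\cap M]$, $\sigma\in\Sigma$. For each $\sigma$ the monomial $x^{\hat\sigma}=\prod_{\rho\notin\sigma(1)}x_\rho$ is a $G_\Sigma$-semi-invariant, so the principal open $\A^n_{x^{\hat\sigma}}\subset\A^n$ is $G_\Sigma$-stable and affine; reading off $\widetilde\sigma^\vee=\{m\in\RR^{\Sigma(1)}:m_\rho\ge 0\text{ for }\rho\in\sigma(1)\}$ identifies $\A^n_{x^{\hat\sigma}}$ with $X_{\widetilde\sigma}$, and since $X(\widetilde\Sigma)=\A^n\smallsetminus Z_\Sigma=\bigcup_\sigma\A^n_{x^{\hat\sigma}}$ these form a $G_\Sigma$-stable affine cover of $X(\widetilde\Sigma)$. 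Using that $u_\rho\in\sigma$ exactly when $\rho\in\sigma(1)$ (in a fan, cones contained in one another are faces), together with the compatibility facts for toric morphisms recalled in Section~\ref{section.toric} (so that $q(\gamma_{\widetilde\tau})=\gamma_\tau$), one checks $q^{-1}(X_\sigma)=X_{\widetilde\sigma}$. Since a good quotient can be built and tested locally on the target, it now suffices to show that $q\colon X_{\widetilde\sigma}\to X_\sigma$ exhibits $X_\sigma$ as the good quotient of $X_{\widetilde\sigma}$ by $G_\Sigma$.

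For this affine statement $G_\Sigma$ is diagonalizable, hence linearly reductive over $\CC$, so the good quotient of $X_{\widetilde\sigma}$ by $G_\Sigma$ is $\Spec\bigl(\CC[\widetilde\sigma^\vee\cap\widetilde M]\bigr)^{G_\Sigma}$, and everything comes down to identifying this invariant ring. The sequence \eqref{seq.diag} identifies the character group of $G_\Sigma$ with $\widetilde M/\im(M\to\widetilde M)$, where $M\to\widetilde M$ sends $m\mapsto(\langle m,u_\rho\rangle)_\rho$ and is injective because $\Sigma(1)$ spans $N_\RR$. The action of $(\CC^*)^{\Sigma(1)}$ grades $\CC[\widetilde\sigma^\vee\cap\widetilde M]$ by $\widetilde M$, with $x^{\widetilde m}$ in weight $\widetilde m$, so the $G_\Sigma$-invariants are the span of the monomials $x^{\widetilde m}$ with $\widetilde m\in\widetilde\sigma^\vee\cap\widetilde M\cap\im(M\to\widetilde M)$. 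The key point is that under $M\hookrightarrow\widetilde M$ one has $m\in\sigma^\vee$ if and only if $\langle m,u_\rho\rangle\ge 0$ for all $\rho\in\sigma(1)$, if and only if $(\langle m,u_\rho\rangle)_\rho\in\widetilde\sigma^\vee$; hence $m\mapsto(\langle m,u_\rho\rangle)_\rho$ restricts to an isomorphism of affine semigroups from $\sigma^\vee\cap M$ onto $\widetilde\sigma^\vee\cap\widetilde M\cap\im(M\to\widetilde M)$. As this semigroup map is precisely the comorphism of $q|_{X_{\widetilde\sigma}}$, it induces an isomorphism $\CC[\sigma^\vee\cap M]\cong\bigl(\CC[\widetilde\sigma^\vee\cap\widetilde M]\bigr)^{G_\Sigma}$, proving the affine claim.

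These local isomorphisms are restrictions of the single morphism $q$, hence agree on the overlaps $X_{\sigma_1}\cap X_{\sigma_2}=X_{\sigma_1\cap\sigma_2}$, and patch to the asserted isomorphism of $X(\Sigma)$ with the good quotient of $X(\widetilde\Sigma)$ by $G_\Sigma$; affineness of $q$ is local and was arranged above, and the remaining good-quotient axioms follow from linear reductivity of $G_\Sigma$ together with the identification $(q_*\mathcal{O}_{X(\widetilde\Sigma)})^{G_\Sigma}=\mathcal{O}_{X(\Sigma)}$. For the last clause, a good quotient by a linearly reductive group is geometric (in the sense of the definition of a good geometric quotient above) exactly when $G_\Sigma$ acts on $X(\widetilde\Sigma)$ with closed orbits, and I would check this one chart $X_{\widetilde\sigma}$ at a time. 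When $\sigma$ is simplicial, $\{u_\rho:\rho\in\sigma(1)\}$ is linearly independent, $G_\Sigma$ acts on $X_{\widetilde\sigma}$ with finite stabilizers, and one checks (following \cite{Cox:95}) that the fibers of $q$ over $X_\sigma$ are single $G_\Sigma$-orbits; conversely, a linear dependence among the $u_\rho$, $\rho\in\sigma(1)$, for a non-simplicial $\sigma$ gives a one-parameter subgroup of $G_\Sigma$ and a point of $X_{\widetilde\sigma}$ whose $G_\Sigma$-orbit is not closed (it degenerates under that subgroup to a point with strictly more vanishing coordinates, hence to a smaller orbit), so $q$ is not geometric there.

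I expect the real content to be the invariant-ring computation of the second stage --- propagating the $\widetilde M$-grading through \eqref{seq.diag} and recognizing that $\widetilde\sigma^\vee\cap\widetilde M\cap\im(M\to\widetilde M)$ is carried isomorphically onto $\sigma^\vee\cap M$ --- together with the bookkeeping in the first stage that pins down the relevant $G_\Sigma$-stable open subscheme of $\A^n$ as exactly $\A^n\smallsetminus Z_\Sigma$ (this is where the ideal defining $Z_\Sigma$ enters in an essential way). The ``geometric iff simplicial'' equivalence, while it requires genuine care about closed orbits, is comparatively standard and stays local on each chart $X_{\widetilde\sigma}$.
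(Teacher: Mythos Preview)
The paper does not supply its own proof of this theorem; it simply cites \cite[Thm 2.1]{Cox:95} and moves on. Your proposal is a correct reconstruction of Cox's original argument (affine cover by the $X_{\widetilde\sigma}$, invariant-ring computation via the $\widetilde M$-grading and the sequence \eqref{seq.diag}, gluing, and the closed-orbit analysis for the simplicial clause), so there is nothing to compare against beyond noting that you have reproduced the cited source.
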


\begin{definition}
  We refer to $X(\widetilde{\Sigma}) = \A^{n} \smallsetminus Z_\Sigma$
  as the {\em Cox space} of the toric variety $X(\Sigma)$. We denote
  the corresponding toric quotient stack
  $[X(\widetilde{\Sigma})/G_\Sigma]$ as ${\mathcal X}(\Sigma)$ and
    refer to it as the {\em Cox stack} of the toric variety
    $X(\Sigma)$.
\end{definition}

\begin{proposition}
The toric variety $X(\Sigma)$ is a good moduli space (in the sense of
\cite{Alp:08}) for the quotient stack ${\mathcal X}(\Sigma)$.
\end{proposition}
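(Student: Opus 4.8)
The plan is to reduce the statement to the already-cited theorem of Cox (Theorem \ref{cox}) together with Alper's machinery, and observe that Alper's definition of good moduli space is essentially tailored so that good quotients of schemes by linearly reductive groups induce good moduli spaces of the associated quotient stacks. First I would recall that $G_\Sigma = \ker(T_{\widetilde N} \to T_N)$ is a diagonalizable group, hence linearly reductive in characteristic $0$ (this is part of the standing assumptions of the paper). By Theorem \ref{cox}, the map $q \colon X(\widetilde\Sigma) \to X(\Sigma)$ is a good quotient for the $G_\Sigma$-action on $X(\widetilde\Sigma) = \A^n \smallsetminus Z_\Sigma$, in the sense of the first Definition in Section \ref{subsec.goodquotient}. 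The Cox stack is by definition ${\mathcal X}(\Sigma) = [X(\widetilde\Sigma)/G_\Sigma]$, so I want to upgrade the statement ``$q$ is a good quotient of the scheme'' to ``the induced map $[X(\widetilde\Sigma)/G_\Sigma] \to X(\Sigma)$ is a good moduli space.''

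The key step is exactly the content of the Remark following Theorem \ref{thm.alp}, which states: if $G$ acts on $X$ and there is a good quotient $X \to M$, then the induced map $[X/G] \to M$ is a good moduli space for $[X/G]$. So the proof is essentially a citation: apply this remark with $G = G_\Sigma$, $X = X(\widetilde\Sigma)$, $M = X(\Sigma)$. To make the argument self-contained I would spell out the two defining conditions. Condition (ii), that $q_* {\mathcal O}_{{\mathcal X}(\Sigma)} = {\mathcal O}_{X(\Sigma)}$, follows because quasi-coherent sheaves on the quotient stack $[X(\widetilde\Sigma)/G_\Sigma]$ are $G_\Sigma$-linearized quasi-coherent sheaves on $X(\widetilde\Sigma)$, and pushforward along the stacky quotient map followed by the good-quotient map $q$ computes $G_\Sigma$-invariants; condition (ii) of the good-quotient definition then gives $(q_* {\mathcal O}_{X(\widetilde\Sigma)})^{G_\Sigma} = {\mathcal O}_{X(\Sigma)}$. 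For condition (i), cohomological affineness, one notes that $q$ being a good quotient makes $q_* (-)^{G_\Sigma}$ exact on $G_\Sigma$-linearized quasi-coherent sheaves (this is condition (i) of the good-quotient definition), and this is precisely the exactness of the pushforward functor from ${\mathcal Q}coh\,[X(\widetilde\Sigma)/G_\Sigma]$ to ${\mathcal Q}coh\,X(\Sigma)$.

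The main — and really only — obstacle is purely bookkeeping: matching the categorical formulation of ``good moduli space for the stack $[X/G]$'' (pushforward along the cohomologically affine map $[X/G] \to M$) with the concrete formulation of ``good quotient of the scheme $X$ by $G$'' ($G$-invariant pushforward $({\mathcal F} \mapsto (q_* {\mathcal F})^G$). This identification is standard and is exactly what Alper's papers establish; since the paper has already quoted it in the Remark after Theorem \ref{thm.alp}, I would simply invoke that Remark, perhaps adding one sentence noting that $G_\Sigma$ is diagonalizable hence linearly reductive, so that the equivalence of ``affine quotient map'' with condition (i) (observed in the Remark after the good-quotient definition) applies and no subtlety arises. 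Thus the proposition is immediate from Theorem \ref{cox} and Theorem \ref{thm.alp} (with its following Remark).
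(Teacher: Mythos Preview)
Your proposal is correct and essentially matches the paper's argument: both reduce to the fact that a good quotient by the linearly reductive group $G_\Sigma$ induces a good moduli space on the quotient stack, citing Alper. The only cosmetic difference is that the paper phrases it locally---checking on each affine open $U_\sigma \subset X(\widetilde\Sigma)$, where the quotient is an affine scheme modulo a linearly reductive group---whereas you invoke Cox's global good-quotient statement together with the Remark after Theorem~\ref{thm.alp} directly; these amount to the same thing.
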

\begin{proof}
This can be checked locally on ${\mathcal X}(\Sigma)$. The open set
$X_\sigma \in {\mathcal X}(\Sigma)$ is the quotient of an affine open
subset of $\A^{\Sigma(1)}$ by the action of the linearly reductive group
$G_\Sigma$ and the result follows from \cite{Alp:08}.
\end{proof}

\begin{remark} If the toric variety  $X(\Sigma)$ is not projective
  or quasi-projective then ${\mathcal X}(\Sigma)$ is an example of an
  Artin stack with a good moduli space which is not a quotient stack arising from GIT, i.e. it is not
  of the form $[X^{ss}/G]$, where $X^{ss}$ is the set of semi-stable points for some linearization on a projective variety $X$.
\end{remark}

\subsection{Toric stacks}
Given a fan $\Sigma$, the Cox construction can be generalized to
produce other quotient stacks whose good moduli spaces are the toric
variety $X(\Sigma)$. Such stacks were defined by Borisov, Chen and
Smith in \cite{BCS:05} and were called {\em toric stacks}\footnote{It
  should be noted that Lafforgue also used the term toric stack. For
  Lafforgue a toric stack is the quotient stack $[X(\Sigma)/T_N]$ and
  is not the same as those constructed in \cite{BCS:05}.}. To give
Borisov, Chen and Smith's definition of toric stacks we begin by
defining the notion of a stacky fan.

\begin{definition}
A {\em stacky fan}
${\mathbf\Sigma}=(N, \Sigma, (v_1, \ldots, v_{n}))$ consists of:
\begin{list}{\arabic{item-counter}.}{\usecounter{item-counter}}
\item  a finitely generated abelian group $N$
of rank $d$
\item a (not necessarily simplicial) fan $\Sigma \subset N_\QQ$, whose rays we will denote by $\rho_1, \ldots, \rho_n$,
\item for every $1 \leq i \leq n$, an element $v_i \in N$ such that its image $\overline{v_i} \in N_\QQ$ lies on $\rho_i$.
\end{list}
\end{definition}
The data of a tuple $(v_1, \ldots, v_n)$ is equivalent to a
homomorphism $\map{\beta}{\ZZ^{n}}{N}$ such that
$\beta(e_i)=v_i$. Hence sometimes the stacky fan ${\mathbf\Sigma}$ is
written as ${\mathbf\Sigma}=(N, \Sigma, \beta)$.

Let $\DG(\beta)$ be the Gale dual group of $N$ (as defined in
\cite{BCS:05}) and let $G_{{\mathbf \Sigma}} =
\Hom(\DG(\beta),\CC^*)$. There is an induced map of abelian groups
$\beta^\vee\colon (\ZZ^n)^* \to \DG(\beta)$. Applying the functor $\Hom(\_,
\CC^*)$ to the homomorphism $\beta^\vee$ yields a map $G_{\mathbf
  \Sigma} \to (\CC^*)^n$ thereby defining an action of $G_{\mathbf
  \Sigma}$ on $\Af^n$.
\begin{definition}
  The toric stack ${\mathcal X}({\mathbf \Sigma})$ is defined to be
  the quotient stack $[X(\widetilde{\Sigma})/G_{\mathbf \Sigma}]$ where
as before  $X(\widetilde{\Sigma})=\A^n \smallsetminus Z_{\Sigma}$.
\end{definition}

\begin{remark} Note that while the stack ${\mathcal X}({\mathbf
    \Sigma})$ depends on the full stacky fan, the Cox space
  $X(\tilde{\Sigma})$ depends only on the underlying fan $\Sigma$.
\end{remark}

To understand the relationship between the Cox construction and toric
stacks we give (following \cite{BCS:05}) an explicit presentation for
the dual group $DG(\beta)$ and corresponding diagonalizable group
$G_{{\mathbf \Sigma}}$. This will allow us to see that when $N$ is free and
the $b_i$ are minimal generators for the rays in the underlying fan $\Sigma$
then $G_{{\mathbf \Sigma}} = G_{\Sigma}$.

Since $N$ is a finitely generated abelian group of rank $d$ it has a
projective resolution as a $\ZZ$-module of the form
 $\xymatrix{0 \ar[r] &\ZZ^{r} \ar[r]^{[Q]} &\ZZ^{d+r} \ar[r] & N
  \ar[r] & 0}$
where $Q$ is a  $(d+r) \times r$ integer valued matrix. The map
$\beta \colon \ZZ^n \to N$ lifts to a 
map $\ZZ^{n} \to \ZZ^{d+r}$ given by an integer matrix $B$.
Since we have two maps with target $\ZZ^{d+r}$ we obtain a map
$\ZZ^{n+r} \stackrel{[BQ]} \to \ZZ^{d+r}.$ Borisov, Chen and Smith show
that the dual group
$\DG(\beta)$ may be identified with the cokernel of the dual map
$(\ZZ^{d+r})^* \stackrel{[BQ]^*} \to (\ZZ^{n+r})^*$.

\begin{proposition}
If $N$ is free and $\beta_1= \rho_1, \ldots , \beta_n = \rho_n$ 
are generators for the
rays of $\Sigma$ then $G_{{\mathbf \Sigma}} = G_\Sigma$.
\end{proposition}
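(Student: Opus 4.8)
The plan is to exploit the freedom in the choice of projective resolution of $N$. Since $N$ is free of rank $d$ we may use the trivial resolution $0 \to 0 \to \ZZ^{d} \xrightarrow{\;\id\;} N \to 0$, i.e.\ take $r=0$; then the matrix $Q$ is empty and the lift $B$ of $\beta$ to $\ZZ^{d+r}=\ZZ^{d}$ is $\beta$ itself, so the combined map $[BQ]$ is simply $\beta\colon \ZZ^{n}\to \ZZ^{d}$. By the description of the Gale dual recalled above, this gives $\DG(\beta)=\coker\bigl(\beta^{*}\colon (\ZZ^{d})^{*}\to(\ZZ^{n})^{*}\bigr)$ and hence $G_{{\mathbf \Sigma}}=\Hom(\coker\beta^{*},\CCe)$.

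Next I would match this against the Cox group. Under the identifications $(\ZZ^{d})^{*}=M$ and $(\ZZ^{n})^{*}=\ZZ^{n}$, the homomorphism $\beta^{*}$ sends $m\in M$ to $\bigl(\langle m,v_{i}\rangle\bigr)_{1\le i\le n}$, and by hypothesis $v_{i}=\beta_{i}$ is the primitive generator $u_{\rho_{i}}$ of $\rho_{i}$, so $\beta^{*}$ is exactly the map $M\to\ZZ^{n}$ used to define \eqref{seq.diag}. Because $\Sigma(1)$ spans $N_{\RR}$ (our standing assumption) this map is injective, so we obtain a short exact sequence $0\to M\xrightarrow{\beta^{*}}\ZZ^{n}\to\DG(\beta)\to 0$. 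Applying $\Hom(-,\CCe)$, which is exact on finitely generated abelian groups since $\CCe$ is divisible, hence injective as a $\ZZ$-module, yields precisely the sequence \eqref{seq.diag} but with $\Hom(\DG(\beta),\CCe)=G_{{\mathbf \Sigma}}$ in the role of $G_{\Sigma}$. Since $G_{\Sigma}=\ker\bigl(T_{\widetilde{N}}\to T_{N}\bigr)$ is by definition the first term of \eqref{seq.diag}, this identifies $G_{{\mathbf \Sigma}}$ with $G_{\Sigma}$ as subgroups of $(\CCe)^{n}$.

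It remains to check that the two groups act on $\A^{n}$ in the same way, which I would do by comparing the maps $G_{{\mathbf \Sigma}}\to(\CCe)^{n}$ and $G_{\Sigma}\to(\CCe)^{n}$ directly. The $G_{{\mathbf \Sigma}}$-action is obtained by applying $\Hom(-,\CCe)$ to $\beta^{\vee}\colon (\ZZ^{n})^{*}\to\DG(\beta)$; with $r=0$ this $\beta^{\vee}$ is nothing but the quotient map $\ZZ^{n}\twoheadrightarrow\DG(\beta)=\coker\beta^{*}$, so the resulting embedding of $G_{{\mathbf \Sigma}}$ in $(\CCe)^{n}$ is the one coming from \eqref{seq.diag}, which is exactly the embedding defining the $G_{\Sigma}$-action on the Cox space. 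Hence ${\mathcal X}({\mathbf \Sigma})=[X(\widetilde{\Sigma})/G_{{\mathbf \Sigma}}]$ coincides with the Cox stack ${\mathcal X}(\Sigma)$.

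The only point requiring genuine care — and the one I would write out most carefully — is the bookkeeping of the dualizations: verifying that the map called $\beta^{*}$ in the Gale-dual description and the map $m\mapsto(\langle m,u_{\rho}\rangle)_{\rho}$ in the Cox construction are literally the same homomorphism (rather than related by a spurious transpose), and that $\beta^{\vee}$ truly reduces to the natural projection once $r=0$. Everything else is formal, using only the exactness of $\Hom(-,\CCe)$ and the standing assumption that the rays span.
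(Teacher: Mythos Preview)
Your proof is correct and follows essentially the same approach as the paper: take $r=0$ since $N$ is free, identify $[BQ]^*$ with the map $m\mapsto(\langle m,u_{\rho_i}\rangle)_i$, and apply $\Hom(-,\CCe)$ to the resulting short exact sequence to recover \eqref{seq.diag}. You are more explicit than the paper about the injectivity of $\beta^*$, the exactness of $\Hom(-,\CCe)$, and the compatibility of the actions on $\A^n$, all of which the paper leaves tacit.
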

\begin{proof}
  If $N$ is free then we can take $Q$ to be the 0 matrix, since $N =
  (\ZZ)^d$ and $\beta = B$ as maps $\ZZ^n \to \ZZ^d$. The dual map is
  the map $M = (\ZZ^d)^* \to (\ZZ^n)^*$ given by $m \mapsto (\langle
  m,\rho_1 \rangle, \langle m, \rho_2 \rangle, \ldots \langle m ,
  \rho_n\rangle)$.  The cokernel of this map is the Weil divisor class group
  $A_{n-1}(X(\Sigma))$, and taking duals gives equation \eqref{seq.diag} defining $G_\Sigma$.
\end{proof}

If $\Sigma$ is simplicial, then $\mathcal{X}(\mathbf{\Sigma})$ is a Deligne-Mumford stack.  
\begin{proposition}
If ${\mathbf \Sigma}$ is a stacky fan then 
the toric variety $X(\Sigma)$ is the good moduli space of the stack 
${\mathcal X}({\mathbf \Sigma})$. If $\Sigma$ is simplicial then $X(\Sigma)$ is the coarse moduli space of
${\mathcal X}({\mathbf \Sigma})$.
\end{proposition}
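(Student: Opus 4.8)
The plan is to verify both assertions Zariski-locally on $X(\Sigma)$, reducing them to a computation of rings of invariant Laurent monomials; this runs parallel to the proof given above for the Cox stack $\mathcal{X}(\Sigma)$, the only new input being that the diagonalizable group is now $G_{\mathbf{\Sigma}}=\Hom(\DG(\beta),\CC^*)$ rather than $G_\Sigma$. First I would recall that $X(\Sigma)=\bigcup_{\sigma\in\Sigma}X_\sigma$ with $X_\sigma=\Spec\CC[\sigma^\vee\cap M]$, and that the preimage of $X_\sigma$ under the Cox morphism in the Cox space $X(\widetilde{\Sigma})=\A^n\smallsetminus Z_\Sigma$ is the affine toric scheme $X_{\widetilde\sigma}$ of $\widetilde\sigma=\Cone(e_\rho\mid\rho\in\sigma(1))$, namely the affine open subset $\Spec\bigl(\CC[x_\rho\mid\rho\in\sigma(1)]\otimes\CC[x_\rho^{\pm1}\mid\rho\notin\sigma(1)]\bigr)$ of $\A^n$ (\cite[\S5.1]{CLS:10}). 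Since $G_{\mathbf{\Sigma}}$ is diagonalizable, hence linearly reductive, the stack $[X_{\widetilde\sigma}/G_{\mathbf{\Sigma}}]$ is cohomologically affine, so by \cite{Alp:08} it has the good moduli space $\Spec\bigl(\Gamma(X_{\widetilde\sigma},\mathcal{O})^{G_{\mathbf{\Sigma}}}\bigr)$; and because the property of being a good moduli space is local on the target (\cite{Alp:08}), it suffices to exhibit, compatibly with restriction to faces, ring isomorphisms $\CC[\sigma^\vee\cap M]\xrightarrow{\ \sim\ }\Gamma(X_{\widetilde\sigma},\mathcal{O})^{G_{\mathbf{\Sigma}}}$.

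The heart of the argument is this invariant computation. Since $G_{\mathbf{\Sigma}}$ acts on $\A^n$ through the torus $(\CC^*)^n$, it acts on a Laurent monomial $x^{\mathbf a}$ (with exponent vector $\mathbf a\in(\ZZ^n)^*$) through the character $\beta^\vee(\mathbf a)\in\DG(\beta)$, so the $G_{\mathbf{\Sigma}}$-invariants of any torus-stable subring are spanned by the monomials $x^{\mathbf a}$ with $\mathbf a\in\ker\beta^\vee$. I would then unwind the Borisov--Chen--Smith presentation of $\DG(\beta)$ as $\coker\bigl([BQ]^*\bigr)$, where $B$ lifts $\beta$ to $\ZZ^{d+r}$ and $0\to\ZZ^r\xrightarrow{Q}\ZZ^{d+r}\to N\to 0$ resolves $N$: using that $\ker Q^*=\Hom(N,\ZZ)=M$, one finds that $\ker\bigl(\beta^\vee\colon(\ZZ^n)^*\to\DG(\beta)\bigr)$ is exactly the image of the map $M\to(\ZZ^n)^*$, $m\mapsto(\langle m,v_i\rangle)_{1\le i\le n}$, which is moreover injective because $\Sigma(1)$ spans $N_\RR$ and $M$ is torsion-free. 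Hence the $G_{\mathbf{\Sigma}}$-invariant Laurent monomials are precisely $\chi^m:=\prod_i x_{\rho_i}^{\langle m,v_i\rangle}$ for $m\in M$, and $\chi^m\in\Gamma(X_{\widetilde\sigma},\mathcal{O})$ iff $\langle m,v_i\rangle\ge 0$ for every $\rho_i\in\sigma(1)$, i.e.\ (since $\overline{v_i}$ is a positive rational multiple of the primitive generator $u_{\rho_i}$) iff $m\in\sigma^\vee\cap M$. Thus $m\mapsto\chi^m$ is the required ring isomorphism, these isomorphisms are visibly compatible with the open immersions $X_\tau\into X_\sigma$ for faces $\tau\preceq\sigma$, and glueing them exhibits $X(\Sigma)$ as the good moduli space of $\mathcal{X}(\mathbf{\Sigma})$ (one sees along the way that the structure morphism is the toric morphism induced by $\beta\otimes\QQ\colon\ZZ^n\to N_\QQ$, which kills $G_{\mathbf{\Sigma}}$).

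For the final assertion, assume $\Sigma$ is simplicial, so that $\mathcal{X}(\mathbf{\Sigma})$ is Deligne--Mumford, as recalled above; moreover it has finite inertia, because the stabilizer in $G_{\mathbf{\Sigma}}$ of a point of $X(\widetilde{\Sigma})$ is contained in one of the finitely many subgroups $\{g\in G_{\mathbf{\Sigma}}\mid g_\rho=1\text{ for all }\rho\notin\sigma(1)\}$, $\sigma\in\Sigma$, each of which is finite in the simplicial case. Such a stack is tame over $\CC$, so its good moduli space is also its coarse moduli space (\cite{Alp:08}; cf.\ \cite{KeMo:97,BCS:05}); since good moduli spaces are unique up to canonical isomorphism (they are universal categorical quotients by Theorem \ref{thm.alp}), it follows that $X(\Sigma)$ is the coarse moduli space of $\mathcal{X}(\mathbf{\Sigma})$.

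I expect the localization and glueing to be essentially formal consequences of Alper's theory, and the simplicial case to follow once finite inertia is checked; the step requiring genuine care is the Gale-duality identification of $\ker\beta^\vee$ in the second paragraph, where one must keep track of the auxiliary lift $B$ and resolution $Q$ and of the fact that $N$ may have torsion and the $v_i$ need not be the primitive ray generators. It may in fact be cleanest to isolate this description of $\ker\beta^\vee$ as a separate lemma, since it is also the combinatorial fact underlying the stacky star subdivision computations elsewhere in the paper.
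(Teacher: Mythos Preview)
The paper states this proposition without proof. Your argument is correct and fills the gap. It parallels the paper's two-line proof of the preceding proposition on the Cox stack $\mathcal{X}(\Sigma)$, which simply invokes \cite{Alp:08} after noting that the affine cover consists of quotients of affines by the linearly reductive group $G_\Sigma$, implicitly relying on Cox's Theorem~\ref{cox} to identify the invariant ring. The genuinely new content you supply, and which the paper omits, is the Gale-duality identification $\ker\beta^\vee=\im\bigl(M\to(\ZZ^n)^*,\ m\mapsto(\langle m,v_i\rangle)_i\bigr)$: this is exactly what is needed to obtain $\Gamma(X_{\widetilde\sigma},\mathcal{O})^{G_{\mathbf{\Sigma}}}\cong\CC[\sigma^\vee\cap M]$ when the $v_i$ are not the primitive ray generators and $N$ may carry torsion, and your unwinding of the $[BQ]^*$ presentation (using $\ker Q^*=\Hom(N,\ZZ)=M$) is correct. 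Your treatment of the simplicial case via finite inertia and tameness is also sound; the finiteness of the subgroups $\{g\in G_{\mathbf\Sigma}\mid g_\rho=1\text{ for }\rho\notin\sigma(1)\}$ for simplicial $\sigma$ is precisely the rank-zero instance of Lemma~\ref{stabilizer-size}, which the paper proves shortly afterward. Your suggestion to isolate the description of $\ker\beta^\vee$ as a lemma is reasonable, though in the paper the later stacky star subdivision computations proceed by comparing the groups $G_{\mathbf\Sigma_\sigma}$ and $G_{\mathbf\Sigma}$ directly via diagram~\eqref{diag.diaggps} rather than by reusing this kernel description.
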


\subsection{The non-simplicial index of fan}
To analyze toric stacks (or toric varieties) when $\Sigma$ is non-simplicial, we introduce and prove some basic properties of what we call the {\em non-simplicial index} of a strongly convex polyhedral cone, or more generally a fan.
\begin{definition}
Let $\sigma \subset \RR^n$ be a strongly convex rational 
polyhedral cone. Define the {\em
  non-simplicial index of} $\s$ to be $\ns(\sigma) = |\sigma(1)| - d_\sigma$ where
$|\sigma(1)|$ is the number of rays in $\sigma$ and $d_\sigma=\dim \sigma$ is the
dimension of the linear subspace spanned by $\sigma$.
\end{definition}
\begin{remark}
With this definition, a cone is simplicial if and only if $\ns(\sigma) = 0$.
\end{remark}
\begin{lemma} \label{lemma.ns1}
If $\tau$ is a face of a cone $\sigma$ then $\ns(\sigma) \geq \ns(\tau)$.
\end{lemma}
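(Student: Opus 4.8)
The plan is to show directly that $\ns$ is monotone under passing to faces by comparing the two quantities that define it. Let $\tau \preceq \sigma$ be a face. I would write $\ns(\sigma) - \ns(\tau) = \bigl(|\sigma(1)| - |\tau(1)|\bigr) - \bigl(d_\sigma - d_\tau\bigr)$, so the inequality $\ns(\sigma)\geq\ns(\tau)$ is equivalent to the statement that the number of rays of $\sigma$ that do not lie in $\tau$ is at least the drop in dimension $d_\sigma - d_\tau$.

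First I would reduce to a dimension count. Since $\tau$ is a face of $\sigma$, we have $\tau(1) \subseteq \sigma(1)$, and every ray of $\sigma$ lying in the linear span $\langle\tau\rangle$ is actually a face of $\tau$ hence a ray of $\tau$ (a standard fact: a face of $\sigma$ contained in a supporting hyperplane of a face $\tau$ is a face of $\tau$; more simply, $\tau = \sigma \cap \langle\tau\rangle$ for a face). Thus the rays in $\sigma(1)\smallsetminus\tau(1)$ are exactly the rays of $\sigma$ not contained in the subspace $\langle\tau\rangle$. Let $k = |\sigma(1)| - |\tau(1)|$ be their number and let $\rho_1,\dots,\rho_k$ be the primitive generators of these rays. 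Because $\sigma(1)$ spans $\langle\sigma\rangle$ and $\tau(1)$ spans $\langle\tau\rangle$, the vectors $\rho_1,\dots,\rho_k$ together with a spanning set of $\langle\tau\rangle$ span $\langle\sigma\rangle$; hence the images of $\rho_1,\dots,\rho_k$ in the quotient space $\langle\sigma\rangle / \langle\tau\rangle$ span it, forcing $k \geq \dim\bigl(\langle\sigma\rangle/\langle\tau\rangle\bigr) = d_\sigma - d_\tau$. This is exactly the desired inequality.

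The main (and essentially only) obstacle is justifying cleanly that a ray of $\sigma$ contained in $\langle\tau\rangle$ is a ray of $\tau$, so that $\sigma(1)\smallsetminus\tau(1)$ really consists precisely of the rays outside $\langle\tau\rangle$; this is where the face hypothesis is used, via $\tau = \sigma \cap \langle\tau\rangle$ (equivalently $\tau = \sigma \cap H$ for a supporting hyperplane $H$, and $\langle\tau\rangle\subseteq H$). Everything after that is elementary linear algebra. I would state the dimension-count step as the heart of the argument and relegate the face fact to a one-line citation of the standard theory of cones (e.g. \cite{Ful:93} or \cite{CLS:10}). An alternative, slightly slicker phrasing avoids even naming the generators: project $\sigma$ to $\langle\sigma\rangle/\langle\tau\rangle$; the image is a strongly convex cone of dimension $d_\sigma - d_\tau$ whose rays are images of rays of $\sigma$ not in $\tau$, and a cone of dimension $m$ has at least $m$ rays, giving $|\sigma(1)\smallsetminus\tau(1)| \geq d_\sigma - d_\tau$ again. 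Either route finishes the proof.
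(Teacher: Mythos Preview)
Your proof is correct. The paper takes a slightly different, more elementary route: it reduces by induction to the case where $\tau$ is a facet of $\sigma$, and then simply observes that $\sigma$ must have at least one ray not in $\tau$ (so $|\sigma(1)| \geq |\tau(1)| + 1$) while $d_\sigma = d_\tau + 1$, giving $\ns(\sigma) \geq \ns(\tau)$ immediately. Your argument instead handles an arbitrary face in one shot via the quotient $\langle\sigma\rangle/\langle\tau\rangle$, which is a bit more conceptual and makes the inequality transparent as a spanning/dimension count. One small remark: the ``main obstacle'' you flag---that a ray of $\sigma$ lying in $\langle\tau\rangle$ must already be a ray of $\tau$---is true but not actually needed for your argument. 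All you use is that $\tau(1) \subseteq \langle\tau\rangle$, so the images of the rays in $\tau(1)$ vanish in the quotient and the images of $\sigma(1)\smallsetminus\tau(1)$ already span $\langle\sigma\rangle/\langle\tau\rangle$; the precise identification of $\sigma(1)\smallsetminus\tau(1)$ with the rays outside $\langle\tau\rangle$ plays no role in the bound.
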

\begin{proof}
The statement is trivial if $\sigma=\tau$, so assume $\sigma \neq \tau$.
By induction we may assume that $\tau$ is a facet (i.e. a codimension one face). 
Then $\sigma$ is spanned by at least $|\tau(1)| +1$ rays. Since $d_\sigma = d_\tau + 1$, it follows that $\ns(\sigma) \geq |\tau(1)|+1 - d_\sigma = |\tau(1)| - d_\tau=\ns(\tau)$.
\end{proof}
The next Lemma is slightly more subtle. 
\begin{lemma} \label{lemma.ns2}
If $\ns(\sigma) > 0$ then there is a unique minimal face $\tau \subset \sigma$
such that $\ns(\tau) = \ns(\sigma)$.
\end{lemma}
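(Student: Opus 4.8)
The plan is to show that the collection of faces of $\sigma$ on which $\ns$ attains the value $\ns(\sigma)$ is closed under intersection, so that it has a unique minimal element. Since the intersection of two faces of $\sigma$ is again a face of $\sigma$, it suffices to prove: if $\tau_1, \tau_2$ are faces of $\sigma$ with $\ns(\tau_i) = \ns(\sigma)$, then $\ns(\tau_1 \cap \tau_2) = \ns(\sigma)$ as well. Given this, the unique minimal face is simply $\tau = \bigcap \{\text{faces } \tau' \subseteq \sigma : \ns(\tau') = \ns(\sigma)\}$, which is itself a face of $\sigma$ and lies in the collection.

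First I would record the two elementary facts that drive the argument. By Lemma \ref{lemma.ns1}, $\ns$ is monotone under the face relation, so $\ns(\tau_1 \cap \tau_2) \leq \ns(\sigma)$ automatically; hence only the reverse inequality needs work. Second, I would use the basic fact about faces of cones: if $\tau$ is a face of $\sigma$, then $\tau(1) = \sigma(1) \cap \tau$, i.e. the rays of $\tau$ are exactly the rays of $\sigma$ that lie in $\tau$. Consequently $(\tau_1 \cap \tau_2)(1) = \tau_1(1) \cap \tau_2(1)$, and moreover the rays of $\sigma$ split according to which of $\tau_1, \tau_2$ contain them.

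The heart of the matter is a counting inequality. Write $a = |\tau_1(1)|$, $b = |\tau_2(1)|$, $c = |(\tau_1\cap\tau_2)(1)|$, and let $d_1, d_2$ be the dimensions of $\tau_1, \tau_2$ and $e$ the dimension of $\tau_1 \cap \tau_2$. The dimension of the linear span of $\tau_1 \cup \tau_2$ is at most $d_1 + d_2 - e$ (the span of $\tau_1\cap\tau_2$ has dimension $e$, and each $\tau_i$ contributes at most $d_i - e$ further independent directions). On the other hand, the rays counted by $a + b - c$ all lie in $\tau_1 \cup \tau_2$ and hence in this span, and if a ray $\rho$ of $\sigma$ lies outside $\tau_1 \cup \tau_2$ we do not count it here. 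The key point is that $\ns(\tau_1) + \ns(\tau_2) = 2\ns(\sigma)$ forces $\tau_1$ and $\tau_2$ each to be "as non-simplicial as possible," and I would like to conclude $c - e \geq \ns(\sigma)$, i.e. $\ns(\tau_1\cap\tau_2) \geq \ns(\sigma)$. The clean way to see this: the rays of $\sigma$ lying in $\tau_1$ but not in $\tau_2$ number $a - c$, and they contribute at most $d_1 - e$ to the dimension beyond $\tau_1\cap\tau_2$, so $a - c - (d_1 - e) \leq 0$ would give simpliciality of a face containing $\tau_1\cap\tau_2$ inside $\tau_1$; combining the analogous statement for $\tau_2$ with $\ns(\tau_1) = \ns(\tau_2) = \ns(\sigma)$ pins down $c - e$. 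I would also need to rule out, or absorb, rays of $\sigma$ lying outside $\tau_1 \cup \tau_2$; since $\tau_1, \tau_2$ need not be facets, I expect to reduce first (as in Lemma \ref{lemma.ns1}) to a chain of facets, or alternatively to argue directly that such outside rays only increase $\ns(\sigma)$ relative to $\ns(\tau_1), \ns(\tau_2)$, contradicting $\ns(\tau_1) = \ns(\sigma)$.

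The main obstacle I anticipate is exactly this dimension/ray bookkeeping when $\tau_1$ and $\tau_2$ are faces of large codimension in $\sigma$ and their union fails to span $\sigma$ — making sure the inequalities go the right way and that I have not double-counted the shared rays of $\tau_1\cap\tau_2$. A safe route around it is to prove the intersection-closure statement only for facets (where $d_1 = d_2 = d_\sigma - 1$, $e = d_\sigma - 2$ if $\tau_1 \neq \tau_2$, and the ray counts are transparent), and then bootstrap: any face of $\sigma$ is an intersection of facets, and I would run an induction on $\codim(\tau_1\cap\tau_2,\sigma)$ using the facet case as the base step, together with Lemma \ref{lemma.ns1} to control the inequality in the inductive step. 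This reduces the whole lemma to the single clean facet computation.
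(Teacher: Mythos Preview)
Your overall strategy---show that the set of faces with $\ns=\ns(\sigma)$ is closed under intersection---is exactly equivalent to the paper's argument, which phrases the same counting as a contradiction. The ingredients you list (inclusion--exclusion on rays, the dimension of the span of $\tau_1\cup\tau_2$, and the extra rays of $\sigma$ outside $\tau_1\cup\tau_2$) are precisely the ones the paper uses.

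Where your write-up goes wrong is in the ``clean way'' paragraph. You try to extract the inequality $a-c \le d_1-e$ from the observation that the $a-c$ rays of $\tau_1\smallsetminus\tau_2$ contribute at most $d_1-e$ new dimensions beyond $\tau_1\cap\tau_2$. That implication is backwards: many rays can span few dimensions, so a dimension bound does not bound the number of rays from above. The quantity $a-c-(d_1-e)=\ns(\tau_1)-\ns(\tau_1\cap\tau_2)$ is in fact \emph{nonnegative} by Lemma~\ref{lemma.ns1}, not $\le 0$. You cannot treat $\tau_1$ and $\tau_2$ separately; the argument only closes when you combine them with $\sigma$. The clean identity is
\[
\ns(\tau_1)+\ns(\tau_2)-\ns(\tau_1\cap\tau_2)=(a+b-c)-(d_1+d_2-e)
= |\tau_1(1)\cup\tau_2(1)| - (d_1+d_2-e),
\]
and since $d_1+d_2-e\ge d_{12}:=\dim\operatorname{span}(\tau_1\cup\tau_2)$ while the rays of $\sigma$ outside $\tau_1\cup\tau_2$ must span the remaining $d_\sigma-d_{12}$ dimensions, one gets $|\tau_1(1)\cup\tau_2(1)|-d_{12}\le |\sigma(1)|-d_\sigma=\ns(\sigma)$. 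Hence $\ns(\tau_1\cap\tau_2)\ge \ns(\tau_1)+\ns(\tau_2)-\ns(\sigma)=\ns(\sigma)$, which together with Lemma~\ref{lemma.ns1} gives equality. This is exactly the paper's computation, run forwards rather than by contradiction.

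The facet detour is unnecessary and, as stated, also flawed: two distinct facets need not meet in codimension two (e.g.\ opposite facets of the cone over a square meet only at the origin), so your claim $e=d_\sigma-2$ can fail; and once you intersect two facets you no longer have a facet, so the induction is not set up. One can salvage it by proving instead ``face with $\ns=\ns(\sigma)$ intersected with facet with $\ns=\ns(\sigma)$ still has $\ns=\ns(\sigma)$'' and iterating, but at that point you are doing more work than the direct count above.
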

\begin{proof}
Let $k=\ns(\s)$. We give a proof by contradiction. Suppose that $\tau_1$ and $\tau_2$
are two distinct minimal faces of $\s$ with $\ns(\tau_1) = \ns(\tau_2)=k$. By
assumption, $\ns(\tau_1 \cap \tau_2)<k$ and thus $\tau_1 \cap \tau_2$ is spanned by fewer than $k+\dim (\tau_1 \cap \tau_2)$ rays. Hence $|\tau_1(1) \cup
\tau_2(1)| > |\tau_1(1)| + |\tau_2(1)|- k-\dim(\tau_1 \cap \tau_2)$.  Let $d_{12}$ is the
dimension of linear subspace of $\RR^n$ spanned by $\tau_1$ and
$\tau_2$. We have $d_{12} = d_{\tau_1} + d_{\tau_2} - \dim(\tau_1 \cap \tau_2)$. Then 
\begin{align*}
\s(1) & \geq |\tau_1(1) \cup \tau_2(2)| + d_\s - d_{12} \\
& > |\tau_1(1)| + |\tau_2(1)|- k-\dim(\tau_1 \cap \tau_2) + d_\s - d_{12} \\
& = |\tau_1(1)| + |\tau_2(1)|- k-\dim(\tau_1 \cap \tau_2) + d_\s - (d_{\tau_1} + d_{\tau_2} - \dim(\tau_1 \cap \tau_2)) \\
& = k + d_\s
\end{align*}
where we used $|\tau_i(1)| = \ns(\tau_i) + d_{\tau_i}=k + d_{\tau_i}$ to get the last line. 
But $\s(1) > k + d_\s$ contradicts $k=\ns(\s)=\s(1)- d_\s$.
\end{proof}

\begin{definition}
If $\sigma$ is a non-simplical cone then the minimal face $\tau \subset \sigma$
such that $\ns(\tau) = \ns(\sigma)$ is called the {\em minimal non-simplicial face} of $\sigma$.
\end{definition}

More generally we can define the non-simplicial index of a fan.
\begin{definition} \label{def.nonsimplicial}
Let $\Sigma \subset \RR^n$ be a fan. The non-simplicial index of $\Sigma$, denoted by
$\ns(\Sigma)$, is defined to be $\max_{\sigma \in \Sigma}\{\ns(\sigma)\}$.
A minimal cone $\sigma \in \Sigma$ with $\ns(\sigma) = \ns(\Sigma)$ is a called
a {\em minimal non-simplicial cone} of $\Sigma$.
\end{definition}

The following is an immediate consequence of Lemma \ref{lemma.ns2}.
\begin{lemma} \label{lemma.ns3} No two minimal non-simplicial cones of
  $\Sigma$ are contained in a cone of $\Sigma$.
\end{lemma}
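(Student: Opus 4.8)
The plan is to deduce Lemma \ref{lemma.ns3} directly from Lemma \ref{lemma.ns2} by a short argument by contradiction. Suppose, for contradiction, that two distinct minimal non-simplicial cones $\sigma_1, \sigma_2 \in \Sigma$ are both faces of a common cone $\sigma \in \Sigma$. By Definition \ref{def.nonsimplicial} we have $\ns(\sigma_1) = \ns(\sigma_2) = \ns(\Sigma)$; call this common value $k$. Since $k = \ns(\Sigma) \geq \ns(\sigma) \geq k$ (the first inequality because $\ns(\Sigma)$ is the maximum over all cones, the second because $\sigma_1$ is a face of $\sigma$ so Lemma \ref{lemma.ns1} gives $\ns(\sigma) \geq \ns(\sigma_1) = k$), we conclude $\ns(\sigma) = k > 0$.

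Now $\sigma$ is a cone with $\ns(\sigma) = k > 0$, so Lemma \ref{lemma.ns2} applies: there is a \emph{unique} minimal face $\tau \subset \sigma$ with $\ns(\tau) = \ns(\sigma) = k$. But $\sigma_1$ and $\sigma_2$ are both faces of $\sigma$ with $\ns(\sigma_i) = k$, so each $\sigma_i$ contains the minimal such face $\tau$ as a face; in particular $\tau$ is itself a face of $\sigma_1$. Since $\ns(\tau) = k = \ns(\sigma_1) = \ns(\Sigma)$, minimality of $\sigma_1$ as a minimal non-simplicial cone of $\Sigma$ forces $\tau = \sigma_1$. By the same reasoning $\tau = \sigma_2$, hence $\sigma_1 = \sigma_2$, contradicting the assumption that they are distinct.

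One technical point worth checking carefully is the notion of "minimal" in play: Lemma \ref{lemma.ns2} asserts a unique minimal face $\tau$ of $\sigma$ among faces realizing the index $k$, and I am using that every face of $\sigma$ realizing $k$ must contain $\tau$ — this is exactly what "unique minimal" means in the face poset, and it is what the proof of Lemma \ref{lemma.ns2} establishes (any two such faces $\tau_1, \tau_2$ would have intersection of strictly smaller index, leading to the contradiction derived there). I should also make sure the empty-cone / trivial-cone edge cases cause no trouble: since $k > 0$, none of $\tau$, $\sigma_1$, $\sigma_2$ can be the zero cone, so there is no degeneracy. There is essentially no computational obstacle here; the only "hard part" is recognizing that the statement is simply the contrapositive-style corollary of the uniqueness in Lemma \ref{lemma.ns2} combined with the monotonicity in Lemma \ref{lemma.ns1}, and phrasing the deduction cleanly. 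The argument should take only a few lines.
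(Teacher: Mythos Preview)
Your proof is correct and follows exactly the route the paper indicates: the paper states only that the lemma ``is an immediate consequence of Lemma \ref{lemma.ns2}'', and your argument spells out precisely that deduction (combining the monotonicity of Lemma \ref{lemma.ns1} with the uniqueness of the minimal face from Lemma \ref{lemma.ns2}). One could streamline slightly by observing that each $\sigma_i$, being a minimal non-simplicial cone of $\Sigma$, is already itself a minimal face of $\sigma$ with $\ns = k$ (since every face of $\sigma_i$ lies in $\Sigma$), so uniqueness gives $\sigma_1 = \tau = \sigma_2$ directly without the intermediate step of showing $\tau \subset \sigma_i$; but this is a cosmetic difference.
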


\subsection{The fibers of the quotient map $q:X(\widetilde{\Sigma}) \to X(\Sigma)$}

The following variety $L_\s$ will be used throughout the paper. 

\begin{definition}[$L_\s$]\label{def.l}
Let $L_\sigma$ be the orbit closure
$V(\widetilde{\s})=\VV(\{x_\rho\}_{\rho \in \sigma(1)}) \cap
X(\tilde{\Sigma}).$ The subspace $L_\sigma$ is clearly
$(\CC^*)^n$-invariant.
\end{definition}

\begin{lemma}\label{stabilizer-size}
The $G_{\mathbf \Sigma}$-stabilizer at a general point of $L_\sigma$
has rank $\ns(\sigma)$ where $\ns(\sigma)$ is the non-simplicial index
of the cone $\sigma$ (Definition \ref{def.nonsimplicial}).
\end{lemma}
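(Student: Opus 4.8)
The plan is to compute the stabilizer directly using the Cox-type presentation of $G_{\mathbf\Sigma}$ and its action on $\A^n$. Recall that $G_{\mathbf\Sigma} = \Hom(\DG(\beta),\CC^*)$ acts on $\A^n$ via the map $G_{\mathbf\Sigma}\to(\CC^*)^n$ dual to $\beta^\vee\colon(\ZZ^n)^*\to\DG(\beta)$. A general point of $L_\sigma = \VV(\{x_\rho\}_{\rho\in\sigma(1)})\cap X(\widetilde\Sigma)$ has coordinates $x_\rho=0$ precisely for $\rho\in\sigma(1)$ and $x_\rho\neq0$ for $\rho\notin\sigma(1)$. So the stabilizer of such a point is the kernel of the composite $G_{\mathbf\Sigma}\to(\CC^*)^n\to(\CC^*)^{\{\rho\notin\sigma(1)\}}$, i.e. $\Hom$ applied to the cokernel of the restriction of $\beta^\vee$ to the coordinates outside $\sigma(1)$. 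Equivalently, the stabilizer group scheme is $\Hom$ of the cokernel of $(\ZZ^{\sigma(1)})^*\oplus(\ZZ^r)^*\xrightarrow{[B_\sigma\,Q]^*}(\ZZ^{d+r})^*$, where $B_\sigma$ is the submatrix of $B$ with columns indexed by $\sigma(1)$; the rank of $G_{\mathbf\Sigma}$-stabilizer equals the rank of this cokernel (the dimension of the maximal torus quotient).

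The next step is to identify that rank. Applying $\Hom(-,\CC^*)$ turns "rank of the cokernel" into "corank of the map $[B_\sigma\,Q]\colon\ZZ^{\sigma(1)+r}\to\ZZ^{d+r}$", i.e. $(d+r) - \operatorname{rk}[B_\sigma\,Q]$. Now $\operatorname{rk}[B_\sigma\,Q]$ is the rank of the image in $\ZZ^{d+r}$ of the lattice spanned by the columns of $B_\sigma$ together with the columns of $Q$. Since the columns of $Q$ span the kernel of $\ZZ^{d+r}\to N$ (rationally), the quotient $[B_\sigma\,Q]\otimes\QQ$ has image of rank $r + \dim_\QQ\bigl(\text{span of }\{\overline v_\rho : \rho\in\sigma(1)\}\text{ in }N_\QQ\bigr)$. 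But each $\overline v_\rho$ lies on the ray $\rho$, so the span of $\{\overline v_\rho:\rho\in\sigma(1)\}$ in $N_\QQ$ is exactly the linear span of $\sigma$, which has dimension $d_\sigma$. Therefore $\operatorname{rk}[B_\sigma\,Q] = r + d_\sigma$, and the corank — hence the rank of the stabilizer — is $(d+r)-(r+d_\sigma) = d - d_\sigma$. Wait: one must double-check the bookkeeping, since $d = \operatorname{rk} N$ need not equal $|\Sigma(1)|$; the correct statement should come out as $|\sigma(1)| - d_\sigma$, so I will need to track ranks of \emph{source} lattices rather than $N$ itself. The clean way: the stabilizer at a general point of $L_\sigma$ is $\Hom$ of $\coker\bigl((\ZZ^{d+r})^*\to(\ZZ^{\sigma(1)})^*\oplus(\ZZ^r)^*\bigr)$, whose rank is $|\sigma(1)| + r - \operatorname{rk}[B_\sigma\,Q] = |\sigma(1)| + r - (r+d_\sigma) = |\sigma(1)| - d_\sigma = \ns(\sigma)$.

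The main obstacle, and the step to execute carefully, is the middle rank computation: verifying that $\operatorname{rk}[B_\sigma\,Q] = r + d_\sigma$ over $\ZZ$ (equivalently over $\QQ$, since rank is insensitive to torsion). This requires knowing that the columns of $Q$ are linearly independent (they are, being the image of the injection $\ZZ^r\hookrightarrow\ZZ^{d+r}$ in the projective resolution of $N$), that the columns of $Q$ span exactly the rational kernel of $\ZZ^{d+r}\to N$, and that modulo that kernel the columns of $B_\sigma$ map to $\{\overline v_\rho:\rho\in\sigma(1)\}$, whose $\QQ$-span is the linear hull of $\sigma$ of dimension $d_\sigma$ because $\overline v_\rho\in\rho$ and distinct rays of $\sigma$ are linearly independent exactly to the extent recorded by $\ns(\sigma)$. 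A minor technical point worth a sentence: "general point of $L_\sigma$" should be interpreted so that all coordinates $x_\rho$ with $\rho\notin\sigma(1)$ are nonzero — such points exist in $L_\sigma\subseteq X(\widetilde\Sigma)$ because removing $Z_\Sigma$ only imposes that the nonvanishing coordinates form the complement of the rays of some cone, and $\sigma$ itself is such a cone — so the stabilizer genuinely only sees the coordinates indexed by $\sigma(1)$, which is what makes the answer depend only on $\sigma$.
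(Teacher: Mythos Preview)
Your proposal is correct and follows essentially the same strategy as the paper: identify the stabilizer of a general point of $L_\sigma$ as $\Hom(-,\CC^*)$ applied to a cokernel built from the Gale dual data, then compute its rank by linear algebra over $\QQ$. The one substantive difference is in how torsion in $N$ is handled. The paper first reduces to the torsion-free quotient $N'=N/T$ by invoking \cite[Proposition~5.4]{Sat:09} (the injection $\DG(\beta')\hookrightarrow\DG(\beta)$ has maximal rank), after which $Q$ disappears and the rank is read off directly from $\beta'$. You instead keep the full presentation $[B_\sigma\,Q]$ and compute $\operatorname{rk}[B_\sigma\,Q]=r+d_\sigma$ by observing that the columns of $Q$ span the rational kernel of $\ZZ^{d+r}\to N$ and the columns of $B_\sigma$ project to vectors spanning the linear hull of $\sigma$. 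Your route is slightly more self-contained; the paper's is terser once the cited reduction is granted. One small cleanup: in your first display of the map you write $[B_\sigma\,Q]^*$ with source $(\ZZ^{\sigma(1)})^*\oplus(\ZZ^r)^*$ and target $(\ZZ^{d+r})^*$, which is backwards---you correct this a few lines later (``The clean way''), and it is that corrected version whose cokernel has rank $|\sigma(1)|+r-(r+d_\sigma)=\ns(\sigma)$.
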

\begin{proof}
  The $(\CC^*)^{\Sigma(1)}$-stabilizer of a general point of
  $L_\sigma$ is the torus $(\CC^*)^{l}$ where $l = | \sigma(1)|$. The $G_{\bf
    \Sigma}$-stabilizer is the group $G_{{\bf \Sigma}, \sigma}
  :=\Hom(\DG(\beta)_\sigma,\CC^*)$ where $\DG(\beta)_\sigma :=
(\ZZ^l)^* \otimes_{(\ZZ^n)^*} \otimes \DG(\beta)$
and the map $(\ZZ^n)^* \to (\ZZ^l)^*$ is dual to the
inclusion map $\ZZ^l \to \ZZ^n$. Thus to compute the rank of
the stabilizer $G_{{\mathbf \Sigma},\sigma}$ we need to compute the
rank of the abelian group $\DG(\beta)_\sigma$.  Let $N' =
N/T$ where $T$ is the torsion subgroup of $N$ and let $\beta'$ be the
composite of $\beta$ with the projection $N \to N'$.  As noted in the
proof of \cite[Proposition 5.4]{Sat:09} there is an injection of
abelian groups $\DG(\beta') \to \DG(\beta)$. Since both groups have
the same rank the map also has maximal rank.

Thus, to compute the rank of $\DG(\beta)_\sigma$
it suffices to compute the rank of the group $\DG(\beta')_\sigma :=
\DG(\beta') \otimes_{(\ZZ^n)^*} (\ZZ^{l})^*$. The map
$\beta'$ maps standard basis vectors in $\RR^n$ to 
rational generators of the cones of the fan $\Sigma$. Thus the rank of
$\DG(\beta')_\sigma$ is $|\sigma(1)| - \dim \sigma = \ns(\sigma)$.
\end{proof}

Let $q \colon X(\widetilde{\Sigma}) \to X(\Sigma)$ be the quotient map. For the next lemma, recall that $V(\s)$ was introduced in Definition \ref{def.vs} as the closure of the orbit of the point $\gamma_\s$.

\begin{lemma} \label{lemma.imagelsigma}
If $\sigma$ is a cone of $\Sigma$ then
$${q(L_\sigma)} = V(\sigma).$$
\end{lemma}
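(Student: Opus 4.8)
The plan is to reduce the statement to a computation about toric morphisms, using the Cox construction as a morphism of toric varieties and the facts recalled in Section \ref{section.toric} about images of orbits. First I would set up the toric description of $q$: it comes from the lattice map $\overline{q}\colon \widetilde{N} = \ZZ^n \to N$ sending $e_\rho \mapsto u_\rho$, which is compatible with the fan $\widetilde{\Sigma}$ and $\Sigma$, and $L_\sigma = V(\widetilde{\sigma})$ is the closure in $X(\widetilde{\Sigma})$ of the orbit $O(\widetilde\sigma)$ of the distinguished point $\gamma_{\widetilde\sigma}$. By the orbit-cone correspondence \eqref{eq.orbit-cone-corr}, $L_\sigma = \coprod_{\widetilde\tau \supseteq \widetilde\sigma} O(\widetilde\tau)$, where the $\widetilde\tau$ range over cones of $\widetilde\Sigma$ containing $\widetilde\sigma$ as a face; since $\widetilde\Sigma = \{\widetilde\tau \mid \tau \in \Sigma\}$ and $\widetilde\sigma \subseteq \widetilde\tau$ if and only if $\sigma$ is a face of $\tau$ (the cones $\widetilde\tau$ being generated by the coordinate basis vectors indexed by $\tau(1)$), the cones $\widetilde\tau$ containing $\widetilde\sigma$ are exactly those with $\sigma \preceq \tau$ in $\Sigma$.

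Next I would compute the image of each orbit. The key input is the fact recalled just before Section \ref{cox-construction}: for a toric morphism coming from a compatible lattice map, if $\widetilde\tau$ maps into the minimal cone $\tau'$ of $\Sigma$ with $\overline{q}_\RR(\widetilde\tau) \subseteq \tau'$, then $q(O(\widetilde\tau)) \subseteq O(\tau')$, and when $\overline q$ has finite cokernel (equivalently $q_\RR$ is surjective, which holds here since $\Sigma(1)$ spans $N_\RR$, so the $u_\rho$ span) one has $q(O(\widetilde\tau)) = O(\tau')$. So I need to identify, for each $\tau \in \Sigma$ with $\sigma \preceq \tau$, the minimal cone of $\Sigma$ containing $\overline q_\RR(\widetilde\tau) = \Cone(u_\rho \mid \rho \in \tau(1))$. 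Since the $u_\rho$ for $\rho \in \tau(1)$ are precisely (rational multiples of) the ray generators of $\tau$, we have $\overline q_\RR(\widetilde\tau) = \tau$ itself, so the minimal such cone is $\tau$, and $q(O(\widetilde\tau)) = O(\tau)$. Therefore
\begin{equation*}
q(L_\sigma) = \bigcup_{\sigma \preceq \tau \in \Sigma} q(O(\widetilde\tau)) = \bigcup_{\sigma \preceq \tau \in \Sigma} O(\tau) = V(\sigma),
\end{equation*}
the last equality again by the orbit-cone correspondence \eqref{eq.orbit-cone-corr}.

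The main technical point to handle carefully is the passage to the open subset $X(\widetilde\Sigma) = \A^n \smallsetminus Z_\Sigma$: $L_\sigma$ is defined as $V(\widetilde\sigma)$ intersected with $X(\widetilde\Sigma)$, and I should check that removing $Z_\Sigma$ does not delete any orbit $O(\widetilde\tau)$ with $\sigma \preceq \tau$. This is immediate from $X(\widetilde\Sigma)$ being the toric variety of the fan $\widetilde\Sigma$ (\cite[Proposition 5.1.9a]{CLS:10}): its torus orbits are exactly the $O(\widetilde\tau)$ for $\widetilde\tau \in \widetilde\Sigma$, so the orbit decomposition of $L_\sigma$ inside $X(\widetilde\Sigma)$ is literally the union over $\tau \in \Sigma$ with $\sigma \preceq \tau$. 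A secondary point is that the set-theoretic image of an orbit closure under a morphism need not be closed in general, but here $q$ is universally closed (Theorem \ref{thm.alp}(i)), or alternatively one notes directly that $\bigcup_{\sigma \preceq \tau} O(\tau)$ is closed, being $V(\sigma)$; so $q(L_\sigma) = q(\overline{O(\widetilde\sigma)}) = \overline{q(O(\widetilde\sigma))} = \overline{O(\sigma)} = V(\sigma)$, which is the cleanest way to phrase the final line. I expect the orbit-bookkeeping — matching faces of $\widetilde\sigma$ in $\widetilde\Sigma$ with faces of $\sigma$ in $\Sigma$ and confirming the minimal-cone identification $\overline q_\RR(\widetilde\tau) = \tau$ — to be the only place requiring genuine care; everything else is a direct appeal to the cited facts.
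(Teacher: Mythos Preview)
Your proof is correct but follows a different route from the paper's. The paper first reduces to the case that $N$ is torsion-free (passing from $G_{\mathbf{\Sigma}}$ to $G_{\mathbf{\Sigma}'}$), and then performs an explicit ring-theoretic check on a single affine chart: it verifies that the generic point of $L_\sigma$ maps to the generic point of $V(\sigma)$ by pulling back the ideal of $V(\sigma) \cap X_\sigma$ in $\CC[\sigma^\vee \cap M]$ along the homomorphism $\chi^m \mapsto x^{D_m}$, and then invokes closedness of $q$. Your argument instead decomposes $L_\sigma$ orbit by orbit via the orbit--cone correspondence and uses the fact, already recalled at the end of Section~\ref{section.toric}, that a toric morphism whose lattice map has finite cokernel sends $O(\widetilde\tau)$ \emph{onto} $O(\tau)$; this lets you read off $q(L_\sigma) = \bigcup_{\sigma \preceq \tau} O(\tau) = V(\sigma)$ directly. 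Your approach is more combinatorial and arguably cleaner, since its key input is set up earlier in the paper; the paper's approach is self-contained at the level of coordinate rings and does not appeal to the orbit-image statement. One small point: you take $q$ to be the Cox map of Definition~\ref{def.cox-construction} (lattice map $e_\rho \mapsto u_\rho$), whereas in the stacky-fan setting the quotient is by $G_{\mathbf{\Sigma}}$, which is why the paper opens with the torsion-free reduction. Your argument is insensitive to this distinction---it only uses that the underlying lattice map sends $e_\rho$ to some nonzero point on the ray $\rho$ and has finite cokernel---so it applies verbatim to $\beta'$ once that reduction is made.
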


\begin{proof}
  Again let $N'$ be the quotient $N/T$ where $T$ is the torsion
  subgroup and let ${\mathbf \Sigma}' = (N', \beta', \Sigma)$ be the
  associated stacky fan.  There is a finite morphism ${\mathcal X}({\mathbf
  \Sigma}) = [X(\widetilde{\Sigma})/G_{{\mathbf \Sigma}}] \to
{\mathcal X}({\mathbf \Sigma}') = [X(\widetilde{\Sigma})/G_{{\mathbf \Sigma}'}]$ and both stacks have $X(\Sigma)$
  as their good moduli spaces. Thus it suffices prove the Lemma for
  the quotient by $G_{{\mathbf \Sigma'}}$. As a consequence we may
  assume that the group $N$ is free abelian.

The map $q$ is constructed by patching quotient maps $q_\tau: U_\tau \to
X_\tau$ for each cone $\tau$ 
of the fan. Here $X_\tau = \Spec \CC[ \tau^\vee \cap M]$ 
is the affine toric variety corresponding to the cone $\tau$
and $U_\tau$ is the affine open set  $\A^{n} \smallsetminus
\VV(x^{\hat{\tau}})$. The map $q_\tau$ is induced by the map
of rings $\CC[\tau^\vee \cap M] \to 
S_\tau=\CC[\{x_\rho\}_{\rho \in \Sigma(1)}]_{x^{\hat{\tau}}}$, given by $\chi^m \mapsto x^{D_m}$ where 
$x^{D_m} = \prod_{\rho} x_\rho^{\langle m, b_\rho \rangle}$.
Given a cone $\sigma$ 
the linear space $L_\sigma$ has non-empty
intersection with the open set $U_\sigma$, and since $q(L_\sigma)$ is closed (because $q$ is a good categorical quotient), it suffices to prove that
the generic point of $L_\sigma$ maps to the generic point of
$V(\sigma) \subset X_\sigma$. The generic point of $L_\sigma$ is the
ideal $I_\sigma = \langle \{x_\rho\}_{\rho \in \sigma(1)}\rangle$. Its
inverse image under the ring homomorphism $\chi^m \mapsto x^{D_m}$ is the
ideal generated by $\{\chi^m| \langle m, \rho\rangle\ >0\}_{\rho \in
  \sigma(1)}$ But this is exactly the ideal of $V(\sigma) \cap X_\sigma =
\Spec \CC[\sigma^\perp \cap (\sigma^\vee \cap M)]$ in $X_\sigma=
\Spec \CC[\sigma^\vee \cap M]$.
\end{proof}

\begin{lemma} \label{lemma.key-inclusion}
Let $J \subseteq \Sigma(1)$ and let $\tau$ be the minimal cone (if
such exists) of
$\Sigma$
which contains $J$.
Then
$q(\VV(\{x_\rho \}_{\rho \in J} )) \subseteq V(\tau).$
\end{lemma}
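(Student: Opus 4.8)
The plan is to reduce to the already-established Lemma \ref{lemma.imagelsigma}. The variety $\VV(\{x_\rho\}_{\rho \in J})$ is the coordinate subspace cut out by the coordinates indexed by $J$; intersecting with $X(\widetilde{\Sigma}) = \A^n \smallsetminus Z_\Sigma$ gives an $(\CC^*)^n$-invariant subvariety, and I claim it equals $L_\tau \cup (\text{possibly lower-dimensional strata})$ — more precisely, if $\tau$ is the minimal cone of $\Sigma$ containing $J$, then every point of $\VV(\{x_\rho\}_{\rho\in J}) \cap X(\widetilde{\Sigma})$ lies in $L_{\tau'} $ for some face $\tau'$ of $\tau$ whose ray set contains $J$, and in particular the generic point lies in $L_\tau$. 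So the key reduction is: $\VV(\{x_\rho\}_{\rho\in J}) \cap X(\widetilde{\Sigma}) \subseteq L_\tau$, because the irrelevant locus $Z_\Sigma$ forces any point with vanishing $J$-coordinates to vanish on all of $\tau(1)$.

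First I would make the set-theoretic observation explicit. A point $x \in \A^n$ lies in $X(\widetilde{\Sigma}) = \A^n \smallsetminus Z_\Sigma$ if and only if the set $S_x = \{\rho \mid x_\rho = 0\}$ is contained in $\sigma(1)$ for some cone $\sigma \in \Sigma$ (this is the standard description, \cite[Prop. 5.1.9]{CLS:10}, which the paper has already invoked). Now if $x \in \VV(\{x_\rho\}_{\rho\in J})$, then $J \subseteq S_x \subseteq \sigma(1)$ for some $\sigma \in \Sigma$; by hypothesis the minimal such cone $\tau$ exists, hence $\tau$ is a face of $\sigma$ and $J \subseteq \tau(1)$, and moreover $S_x \subseteq \sigma(1)$ together with minimality does not immediately give $S_x \subseteq \tau(1)$ — but that is fine, because I only need $J \subseteq S_x$, i.e. $x \in L_J'$ where I just need $x$ to vanish on the coordinates in $J$. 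The cleaner route: $\VV(\{x_\rho\}_{\rho\in J}) \cap X(\widetilde{\Sigma}) \subseteq \VV(\{x_\rho\}_{\rho \in \tau(1)}) \cap X(\widetilde{\Sigma}) = L_\tau$ would be false in general (it adds constraints), so instead I argue on images directly.

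So the actual argument I would run is: decompose $\VV(\{x_\rho\}_{\rho\in J}) \cap X(\widetilde{\Sigma})$ as a union of $(\CC^*)^n$-orbit closures. By the orbit description on $\A^n$, this set is a union of torus orbits $O_{S}$ (the orbit where exactly the coordinates in $S$ vanish) over all $S \supseteq J$ with $S \subseteq \sigma(1)$ for some $\sigma \in \Sigma$; equivalently over all $S \supseteq J$ with $S \subseteq \tau'(1)$ for some cone $\tau'$ — and since $\tau$ is the minimal cone containing $J$, every such $\tau'$ containing such an $S$ contains $J$, hence contains $\tau$ as a face. For each such $S$, the orbit closure $\overline{O_S}$ equals $L_{\tau_S}$ where $\tau_S$ is the minimal cone containing $S$ (which contains $\tau$), and by Lemma \ref{lemma.imagelsigma} we get $q(\overline{O_S}) = V(\tau_S)$. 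By the orbit–cone correspondence \eqref{eq.orbit-cone-corr}, $V(\tau_S) \subseteq V(\tau)$ since $\tau$ is a face of $\tau_S$. Taking the union over all relevant $S$ and using that $q$ is continuous (indeed closed), we conclude $q(\VV(\{x_\rho\}_{\rho\in J}) \cap X(\widetilde{\Sigma})) \subseteq V(\tau)$. I would present this compactly: $\VV(\{x_\rho\}_{\rho\in J}) \cap X(\widetilde{\Sigma}) \subseteq \bigcup_{\tau \preceq \tau',\ \tau' \in \Sigma} L_{\tau'}$, then apply Lemma \ref{lemma.imagelsigma} termwise and the orbit-cone correspondence.

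The main obstacle, and the only place requiring care, is the bookkeeping about which coordinate subspaces $\VV(\{x_\rho\}_{\rho\in S})$ actually meet $X(\widetilde{\Sigma})$ and the identification of their (schematic or reduced) images with the $V(\tau_S)$ — in particular making sure the minimality hypothesis on $\tau$ is used correctly so that all the cones $\tau_S$ appearing genuinely contain $\tau$ as a face. One should also handle the torsion reduction exactly as in the proof of Lemma \ref{lemma.imagelsigma}: pass to $N' = N/N_{\tors}$, since the finite morphism $\mathcal{X}(\mathbf\Sigma) \to \mathcal{X}(\mathbf\Sigma')$ has the same good moduli space $X(\Sigma)$ and does not affect the set-theoretic image; after that reduction $N$ is free and $L_\tau$, $V(\tau)$ behave as in the classical toric dictionary. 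Everything else is immediate from results already in the excerpt.
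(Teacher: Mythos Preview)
Your orbit-decomposition strategy is sound in spirit, but the key identification $\overline{O_S} = L_{\tau_S}$ is false, and with it the ``compact'' inclusion $\VV(\{x_\rho\}_{\rho\in J}) \cap X(\widetilde{\Sigma}) \subseteq \bigcup_{\tau \preceq \tau'} L_{\tau'}$. The orbit closure of $O_S$ in $X(\widetilde{\Sigma})$ is $\VV(\{x_\rho\}_{\rho\in S}) \cap X(\widetilde{\Sigma})$, whereas $L_{\tau_S} = \VV(\{x_\rho\}_{\rho\in \tau_S(1)}) \cap X(\widetilde{\Sigma})$; these agree only when $S = \tau_S(1)$. For a concrete failure, take the single non-simplicial cone $\tau = \Cone(e_1,e_2,e_3,e_1-e_2+e_3)$ in $\RR^3$ and $J = \{\rho_1,\rho_3\}$: the minimal cone containing $J$ is $\tau$ itself, yet the point with $S_x = \{\rho_1,\rho_3\}$ lies in $\VV(x_1,x_3)\cap X(\widetilde{\Sigma})$ but not in $L_\tau$ (since $x_2,x_4 \neq 0$), and there is no larger $\tau'$. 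So Lemma~\ref{lemma.imagelsigma} does not apply to $\overline{O_S}$ as you wrote it. Your argument is easily repaired by invoking the toric-morphism fact already recorded in Section~\ref{section.toric}: since $\overline q_{\RR}$ is surjective, $q(O(\widetilde S)) = O(\tau_S)$ for $\tau_S$ the minimal cone of $\Sigma$ containing $\overline q(\widetilde S)$, and $\tau \preceq \tau_S$ gives $O(\tau_S) \subseteq V(\tau)$.

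The paper's own proof avoids all of this bookkeeping. It simply writes $\VV(\{x_\rho\}_{\rho\in J}) = \bigcap_{\rho\in J}\VV(x_\rho)$, uses the trivial inclusion $q(\bigcap) \subseteq \bigcap q$, applies Lemma~\ref{lemma.imagelsigma} to each single ray to get $q(\VV(x_\rho)) \subseteq V(\rho)$, and finishes with the standard toric identity $\bigcap_{\rho\in J} V(\rho) = V(\tau)$ from \cite{Ful:93}. This is a three-line argument with no orbit decomposition and no case analysis, and it uses the minimality of $\tau$ only through that last identity. Your (corrected) approach would also work and is more hands-on about what the fibers look like, but the paper's route is decidedly shorter.
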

\begin{remark} If $J$ does not lie in any cone of $\Sigma$
then $\VV(\{x_\rho\}_{\rho \in J}) \cap \tilde{X(\Sigma)}$ is empty.
\end{remark}
\begin{proof}
By Lemma  \ref{lemma.imagelsigma}
we know that  $q( \VV(x_\rho)) \subseteq V(\rho)$. Hence
\begin{eqnarray*}
q(\VV(\{x_\rho\}_{\rho \in J} ))& =&q(\cap_{\rho \in J} \VV(x_\rho ))\\ 
& &\subseteq \cap_{\rho \in J} q( \VV(x_\rho))\\
& &\subseteq \cap_{\rho \in J} V(\rho)= V(\tau) 
\end{eqnarray*}
where the equality in the last line follows from the discussion
on pp.99-100 of  \cite{Ful:93}.
\end{proof}

\begin{definition}\label{def.tilde-ls}
Let $\sigma$ be a cone in the fan $\Sigma$. 
Define  $\tilde{L}_\sigma = q^{-1}(V(\sigma))=q^{-1}(q(L_\s))$.
\end{definition}
 
\begin{definition} \label{def.itilde}
Let $\mu$ be a cone containing $\sigma$ and let
$\tilde{I}_{\mu,\sigma}$ be the ideal generated by 
the monomials
$x^{\hat{\tau}}_\mu :=\prod_{\rho \in \mu(1) \smallsetminus \tau(1)} x_\rho$ 
where
$\tau$ runs over all proper faces of $\mu$
such that 
that $\tau$ does not contain $\sigma$; i.e. $\tau \cap \sigma $ is a proper face of $\sigma$.
Let $\tilde{I}_\sigma = \cap_{\mu \supset \sigma} \tilde{I}_{\mu,\sigma}$.
\end{definition}

\begin{remark} \label{rem.maximal}
Observe that if $\mu' \subset \mu$ then  $\tilde{I}_{\mu',\sigma} 
\supset \tilde{I}_{\mu,\sigma}$ so we may assume that the cones $\mu$ in
Definition \ref{def.itilde} are maximal.
\end{remark}

\begin{proposition} \label{prop.ltilde}
$\tilde{L}_\sigma = \VV(\tilde{I}_\sigma)$.
\end{proposition}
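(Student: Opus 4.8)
The plan is to prove the set-theoretic equality $\tilde L_\sigma = \VV(\tilde I_\sigma)$ by computing $q^{-1}(V(\sigma))$ chart by chart over the affine cover $\{X_\mu\}_{\mu \in \Sigma}$ and matching it up with the ideal $\tilde I_\sigma$. Recall from the proof of Lemma~\ref{lemma.imagelsigma} that over the open set $U_\mu = \A^n \smallsetminus \VV(x^{\hat\mu})$ the quotient map $q$ is induced by the ring map $\CC[\mu^\vee\cap M] \to S_\mu$, $\chi^m \mapsto x^{D_m}$, and that the ideal of $V(\sigma)\cap X_\mu$ can be described via the orbit–cone correspondence \eqref{eq.orbit-cone-corr}. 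As in that lemma, we may first reduce to the case where $N$ is free abelian, using the finite morphism ${\mathcal X}({\mathbf\Sigma}) \to {\mathcal X}({\mathbf\Sigma}')$ and the fact that both stacks have $X(\Sigma)$ as good moduli space, so that pulling back $V(\sigma)$ is unaffected.

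The key step is the local identification. Fix a maximal cone $\mu$ (Remark~\ref{rem.maximal} lets us restrict to maximal $\mu$). By the orbit–cone correspondence, $V(\sigma) \cap X_\mu = \bigcup_{\tau} O(\tau)$ where $\tau$ ranges over faces of $\mu$ containing $\sigma$; equivalently, $X_\mu \smallsetminus V(\sigma) = \bigcup_{\tau} O(\tau)$ over faces $\tau$ of $\mu$ \emph{not} containing $\sigma$, and $O(\tau)$ is cut out in $X_\mu$ by the monomial $x^{\hat\tau}_\mu = \prod_{\rho \in \mu(1)\smallsetminus\tau(1)} x_\rho$ after passing through $q$ — more precisely, $q^{-1}(O(\tau)\text{-closure}) \cap U_\mu = \VV(x^{\hat\tau}_\mu)\cap U_\mu$, by the same monomial-ideal computation used in Lemma~\ref{lemma.imagelsigma}. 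Taking the union over those $\tau$ and then the closure, one gets that $q^{-1}(V(\sigma)) \cap U_\mu = \VV(\tilde I_{\mu,\sigma}) \cap U_\mu$, since $\tilde I_{\mu,\sigma}$ is by definition generated by exactly these monomials $x^{\hat\tau}_\mu$. Here one has to be slightly careful that $\VV$ of a union of orbit closures corresponds to the intersection of the corresponding monomial ideals, which is generated by the individual monomials — this is where the structure of $\tilde I_{\mu,\sigma}$ as generated by (not intersected over) the $x^{\hat\tau}_\mu$ is used.

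To globalize, I would intersect over all maximal $\mu \in \Sigma$: since the $U_\mu$ cover $X(\widetilde\Sigma)$ and $q^{-1}(V(\sigma)) \cap U_\mu = \VV(\tilde I_{\mu,\sigma})\cap U_\mu$, gluing gives $q^{-1}(V(\sigma)) = \bigcap_{\mu \supset \sigma}\VV(\tilde I_{\mu,\sigma}) = \VV\big(\sum_\mu \tilde I_{\mu,\sigma}\big)$ as subsets — but set-theoretically $\VV$ of a sum is $\VV\big(\bigcap_\mu \tilde I_{\mu,\sigma}\big)$ only up to radical, so I will instead argue directly on points: a point $p \in X(\widetilde\Sigma)$ lies in some $U_\mu$, and the local statement on that chart already determines membership in $q^{-1}(V(\sigma))$; matching this against the definition $\tilde I_\sigma = \bigcap_{\mu\supset\sigma}\tilde I_{\mu,\sigma}$ requires checking that on the chart $U_\mu$ the ideal $\tilde I_\sigma$ and $\tilde I_{\mu,\sigma}$ cut out the same set, which follows because the extra generators coming from other cones $\mu'$ involve variables $x_\rho$ with $\rho \notin \mu(1)$, hence are units on $U_\mu$ once one also knows $p \notin Z_\Sigma$. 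The main obstacle I anticipate is precisely this bookkeeping: verifying that restricting $\tilde I_\sigma$ to $U_\mu$ recovers $\tilde I_{\mu,\sigma}$ (so that the intersection in Definition~\ref{def.itilde} behaves correctly chart-locally), and confirming that intersecting over \emph{all} $\mu \supset \sigma$ rather than just the maximal ones gives the same subvariety — Remark~\ref{rem.maximal} handles the latter, but the former needs the observation that monomials $x^{\hat\tau}_{\mu'}$ for $\mu' \ne \mu$ either vanish identically on a face of $\mu$ or are invertible on $U_\mu$, together with Lemma~\ref{lemma.key-inclusion} to control which subsets $J$ actually meet $X(\widetilde\Sigma)$.
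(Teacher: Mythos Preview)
Your underlying strategy---compute $\tilde L_\sigma$ chart by chart via the orbit decomposition of $X_\mu$---is different from the paper's and, once fixed, is arguably more direct. The paper instead proves the two inclusions by separate techniques: for $\VV(\tilde I_\sigma)\supset\tilde L_\sigma$ it shows each generator $x^{\hat\tau}_\mu$ has a power divisible by some $x^{D_m}$ in the pullback ideal of $V(\sigma)$; for the reverse inclusion it analyzes the primary decomposition of the monomial ideal, characterizes the minimal primes $\mathfrak p_J$ containing it as exactly those where $J$ spans a face of $\mu$ containing $\sigma$, and then applies Lemma~\ref{lemma.key-inclusion}.

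However, the key identification you write down is wrong. You assert $q^{-1}(\overline{O(\tau)})\cap U_\mu=\VV(x^{\hat\tau}_\mu)\cap U_\mu$, but already for a simplicial $2$-cone $\mu$ with rays $\rho_1,\rho_2$ and $\tau=\rho_1$ this fails: $x^{\hat\tau}_\mu=x_{\rho_2}$, while $q^{-1}(V(\rho_1))=L_{\rho_1}=\VV(x_{\rho_1})$ by Lemmas~\ref{lemma.imagelsigma} and~\ref{lem.ltilde=l}. What is true is the \emph{open} statement: $q^{-1}(X_\tau)\cap U_\mu$ is the locus where $x^{\hat\tau}_\mu\neq 0$. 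The argument you want is therefore: from the orbit--cone correspondence, $X_\mu\smallsetminus V(\sigma)=\bigcup_{\tau\subset\mu,\ \tau\not\supset\sigma} X_\tau$; pulling back gives $U_\mu\smallsetminus\tilde L_\sigma=\bigcup_{\tau\not\supset\sigma}\{x^{\hat\tau}_\mu\neq 0\}$; taking complements yields $\tilde L_\sigma\cap U_\mu=\VV(\tilde I_{\mu,\sigma})\cap U_\mu$ immediately. There is no ``union then closure'' step---it is ``union of opens, then complement.''

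Your globalization paragraph is also not right as stated: you write $\bigcap_\mu\VV(\tilde I_{\mu,\sigma})=\VV(\sum_\mu\tilde I_{\mu,\sigma})$ and then worry about radicals, but the real issue is that $\tilde I_\sigma$ is the \emph{intersection} $\bigcap_\mu\tilde I_{\mu,\sigma}$, so one must check that on each $U_\mu$ the ideals $\tilde I_\sigma$ and $\tilde I_{\mu,\sigma}$ cut out the same locus. The paper does this by first showing both sides are empty on $U_\mu$ when $\mu\not\supset\sigma$, and then observing that for $\mu\supset\sigma$ the coordinates $x_\rho$ with $\rho\notin\mu(1)$ are units on $U_\mu$, so the localized $\tilde I_\sigma$ is generated by the $x^{\hat\tau}$ for faces $\tau$ of $\mu$ not containing $\sigma$. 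You allude to this at the end but do not carry it out.
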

\begin{proof}
It suffices to prove the statement for each open in the covering
$U_\mu$ of $X(\widetilde{\Sigma})$ where
$\mu$ is a (maximal) cone.
Note that if $\mu$ is a cone then $U_\mu \cap \tilde{L}_\sigma =\emptyset$   
unless $\sigma \subset \mu$ (because $p \in \tilde{L}_\sigma \cap U_\mu$ implies $q(p) \in V(\sigma) \cap X_\mu$, which is empty unless $\sigma \subset \mu$). We also claim that $U_\mu \cap \VV(\tilde{I}_\sigma) = \emptyset$ if $\mu$ does not contain $\sigma$. To see this note that if $\mu$ does not contain $\s$ and $\mu'$ is a cone containing $\sigma$
then $\tau' =\mu' \cap \mu$ is a face of $\mu'$ not containing $\sigma$
and $x^{\hat{\tau'}}_{\mu'} \neq 0$ on $U_\mu$.

Thus it suffices to prove the proposition 
for the quotient map $U_\mu \to X_\mu$ for any (maximal) cone $\mu$
containing $\sigma$. On $U_\mu$, $x_\rho \neq 0$ if $\rho \notin \mu$
so we may assume that ${\tilde I}_\sigma$ is generated
by the products $x^{\hat{\tau}}$ where $\tau$ runs over all faces
of $\mu$ that do not contain $\sigma$.

Step I: $\VV(\tilde{I}_\sigma) \supset \tilde{L}_\sigma$.

To prove Step I we will show that $\sqrt{\tilde{I}_\sigma} \subset
I(\tilde{L}_\sigma)$. Let $x^{\hat{\tau}} \in \tilde{I}_\s$ be a generator, where $\tau$ is a face of $\mu$ that does not contain $\s$. By definition $I(\tilde{L}_\sigma)$ is the radical of the ideal generated by
$x^{D_m}$ for all $m \in (\mu^\vee \cap M)$ such that $\langle m, \rho \rangle > 0$ for some ray $\rho \in \sigma(1)$.
If $\tau \cap \sigma$ is a proper face of $\sigma$ then there is an 
$m \in \tau^\perp \cap (\mu^\vee \cap 
M)$ such that $\langle m, \rho \rangle > 0$
for some ray $\rho \in \sigma(1) \cap \tau(1)^c$. 
Hence, for  
$N>>0$ such that $x^{D_m} | (x^{\hat{\tau}})^N$. Thus,
$\sqrt{\tilde{I}_\sigma} \subset I(\tilde{L}_\sigma)$ as claimed.

Step II: $q(\VV(\tilde{I}_\s)) \subset V(\sigma).$

Since $\tilde{I}_\sigma$ is a monomial ideal the primes in its primary
decomposition are all generated by the $x_\rho$ with $\rho \in
\mu(1)$.  For a subset $J \subseteq \mu(1)$ let ${\mathfrak p}_J$ be
the prime ideal generated by $\{x_\rho\}_{\rho \in J}$.  Let $\nu$ be
the minimal face of $\mu$ containing the rays in $J$. Suppose that $\nu
\supset \sigma$. If $\tau$ is a face of $\mu$ not containing $\sigma$
then there is a ray $\rho \in J$ such that $\rho \notin
\tau(1)$. Hence $x_\rho | x_\mu^{\hat{\tau}}$. Thus ${\mathfrak p}_J
\supset \tilde{I}_\sigma$.  On the other hand if $J$ spans a face
$\nu$ that doesn't contain $\sigma$ then $x_\mu^{\hat{\nu}} \in
\tilde{I}_\sigma$ but $x_\mu^{\hat{\nu}} \notin {\mathfrak p}_J$. Thus
${\mathfrak p}_J$ contains $\tilde{I}_\sigma$ if and only if $J$ spans
a face of $\mu$ which contains $\sigma$.   By Lemma
\ref{lemma.key-inclusion} $q_\sigma(\VV({\mathfrak p}_J)) \subset
V(\sigma)$ if $J$ spans a cone containing $\sigma$. 
This proves Step II and with it Proposition
\ref{prop.ltilde}.
\end{proof}

\begin{lemma} \label{lem.ltilde=l}
$\tilde{L}_\sigma = L_\sigma$ if and only if every cone $\mu$ containing 
$\sigma$ is simplicial.
\end{lemma}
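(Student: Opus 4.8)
The plan is to convert the equality $\tilde L_\sigma = L_\sigma$ into a statement about the monomial ideals $\tilde I_\sigma$ and $I_\sigma := \langle x_\rho : \rho \in \sigma(1)\rangle$ on the standard charts of the Cox space, and then analyse those ideals combinatorially. One inclusion costs nothing: by Lemma \ref{lemma.imagelsigma} we have $q(L_\sigma) = V(\sigma)$, so $L_\sigma \subseteq q^{-1}(V(\sigma)) = \tilde L_\sigma$ for every cone $\sigma$; hence the content of the lemma lies entirely in when $\tilde L_\sigma \subseteq L_\sigma$. Since both are closed in $X(\widetilde\Sigma) = \A^n \smallsetminus Z_\Sigma$, I would verify this on the charts $U_\mu$ with $\mu$ a maximal cone. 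If $\mu \not\supseteq \sigma$ then $L_\sigma \cap U_\mu = \emptyset = \tilde L_\sigma \cap U_\mu$ (the latter noted at the start of the proof of Proposition \ref{prop.ltilde}), so only the charts with $\mu \supseteq \sigma$ are relevant; on such a chart, Proposition \ref{prop.ltilde} together with Definition \ref{def.itilde} identifies $\tilde L_\sigma \cap U_\mu$ with $\VV(\tilde I_{\mu,\sigma})$, where $\tilde I_{\mu,\sigma}$ is generated by the monomials $x^{\hat\tau}_\mu = \prod_{\rho \in \mu(1)\smallsetminus\tau(1)} x_\rho$ as $\tau$ runs over the proper faces of $\mu$ not containing $\sigma$, while $L_\sigma \cap U_\mu = \VV(I_\sigma)$. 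Also, ``every cone of $\Sigma$ containing $\sigma$ is simplicial'' is equivalent to ``every maximal cone containing $\sigma$ is simplicial'', since any cone containing $\sigma$ is a face of a maximal cone containing $\sigma$ and faces of simplicial cones are simplicial. So the lemma reduces to: $\VV(\tilde I_{\mu,\sigma}) = \VV(I_\sigma)$ for all maximal $\mu \supseteq \sigma$ if and only if all such $\mu$ are simplicial.

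For the implication ``all such $\mu$ simplicial $\Rightarrow$ equality'', fix a simplicial maximal cone $\mu \supseteq \sigma$ and a ray $\rho \in \sigma(1)$. Because $\mu$ is simplicial, $\mu(1)\smallsetminus\{\rho\}$ spans a facet $\tau_\rho$ of $\mu$ whose ray set is exactly $\mu(1)\smallsetminus\{\rho\}$; in particular $\rho \notin \tau_\rho(1)$, so $\sigma \not\subseteq \tau_\rho$, and $x^{\hat{\tau_\rho}}_\mu = x_\rho$ is among the generators of $\tilde I_{\mu,\sigma}$. Thus $I_\sigma \subseteq \tilde I_{\mu,\sigma}$, giving $\VV(\tilde I_{\mu,\sigma}) \subseteq \VV(I_\sigma) = L_\sigma \cap U_\mu$; combined with the free inclusion this yields $\tilde L_\sigma = L_\sigma$.

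For the converse I argue by contraposition. If some cone containing $\sigma$ is non-simplicial then, by Lemma \ref{lemma.ns1}, so is every maximal cone containing it; fix such a maximal non-simplicial $\mu \supseteq \sigma$. I must exhibit an irreducible component of $\VV(\tilde I_{\mu,\sigma})$ not contained in $\VV(I_\sigma)$. Reading off from Step II of the proof of Proposition \ref{prop.ltilde}, for $J \subseteq \mu(1)$ the prime $\mathfrak p_J = \langle x_\rho : \rho \in J\rangle$ contains $\tilde I_{\mu,\sigma}$ exactly when the smallest face of $\mu$ containing $J$ contains $\sigma$; and $\VV(\mathfrak p_J)$ meets $U_\mu$, since $J$ lies in a cone of $\Sigma$ and so the corresponding coordinate subspace is not swallowed by $Z_\Sigma$. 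Therefore it suffices to find $J \subseteq \mu(1)$ with $\sigma(1) \not\subseteq J$ whose smallest enclosing face in $\mu$ still contains $\sigma$; equivalently, a ray $\rho_0 \in \sigma(1)$ and a cone $\nu$ with $\sigma \subseteq \nu \subseteq \mu$ such that $\nu(1)\smallsetminus\{\rho_0\}$ is not contained in any proper face of $\nu$. When $\sigma$ is itself non-simplicial and all its proper faces are simplicial, this is clean: for any $\rho_0 \in \sigma(1)$, a facet of $\sigma$ containing $\sigma(1)\smallsetminus\{\rho_0\}$ would, by a dimension count with the non-simplicial index, itself be non-simplicial, contradicting that its proper faces are simplicial; so $\nu = \sigma$ and $J = \sigma(1)\smallsetminus\{\rho_0\}$ work, and $\VV(\mathfrak p_J) \cap U_\mu$ is the extra component.

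The main obstacle is reducing the general case to this clean one: one must convert the mere non-simpliciality of some cone containing $\sigma$ into a ray of $\sigma$ that is inessential inside a cone containing $\sigma$, and this requires genuinely using how $\sigma$ sits in the face lattice of $\mu$ rather than just the numbers $|\sigma(1)|$ and $\dim\sigma$ --- a non-simplicial $\mu$ can contain a face $\sigma$ none of whose rays can be deleted without shrinking its enclosing face. I expect the cleanest route is to locate an appropriate non-simplicial face of $\mu$ on which to run the argument above and then to track, through the generators $x^{\hat\tau}_\mu$, that the resulting prime $\mathfrak p_J$ containing $\tilde I_{\mu,\sigma}$ really does omit a variable $x_{\rho_0}$ with $\rho_0 \in \sigma(1)$; getting this bookkeeping right, and checking the resulting component survives on the chart $U_\mu$, is where the care lies.
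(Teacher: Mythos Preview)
Your forward direction is correct but more laborious than the paper's: the paper simply notes that when every $\mu\supseteq\sigma$ is simplicial, each $q_\mu\colon U_\mu\to X_\mu$ is a \emph{geometric} quotient (Theorem~\ref{cox}), and closed $G_{\mathbf\Sigma}$-invariant subsets are automatically saturated under geometric quotients. That one line replaces your monomial-ideal computation entirely.

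For the converse you rightly sense trouble---and in fact the ``only if'' direction as stated is false, so no bookkeeping will close your gap. Take $N=\ZZ^4$ and let $\mu$ be the cone over a square pyramid: apex ray $a$ and base rays $b_1,b_2,b_3,b_4$ satisfying the single relation $b_1-b_2+b_3-b_4=0$. Let $\Sigma$ be the fan of faces of $\mu$ and $\sigma=\langle a\rangle$. Then $\mu\supseteq\sigma$ is non-simplicial, yet the base $\tau=\Cone(b_1,b_2,b_3,b_4)$ is a facet of $\mu$ not containing $a$, so $x^{\hat\tau}_\mu=x_a$ lies in $\tilde I_{\mu,\sigma}$; every other generator is also divisible by $x_a$ (since every proper face omitting $a$ is a face of the base), whence $\tilde I_{\mu,\sigma}=(x_a)=I_\sigma$ and $\tilde L_\sigma=L_\sigma$. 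Equivalently, $G_{\mathbf\Sigma}\cong\CC^*$ acts on $\A^5$ with weights $(0,1,-1,1,-1)$, so $x_a$ is $G_{\mathbf\Sigma}$-invariant and $L_\sigma=\VV(x_a)$ is visibly saturated. The paper's own proof addresses only the ``if'' direction; the ``only if'' is asserted but not argued, and your struggle to complete it reflects that it fails in general. Your own analysis already points to the obstruction: what matters is whether, for each $\rho\in\sigma(1)$, the set $\mu(1)\smallsetminus\{\rho\}$ is the ray set of a facet of $\mu$---a condition that can hold for particular rays of a non-simplicial cone.
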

\begin{proof}
If $\mu \supset \sigma$ is simplicial then the quotient map
$q_\mu \colon U_\mu \to X_\mu$ is a geometric quotient. Since
$L_\sigma$ is closed and 
$G_{{\mathbf \Sigma}}$-invariant, it follows that 
$q_\sigma^{-1}(q_\sigma(L_\sigma)) = L_\sigma$.
\end{proof}

\section{Blowups and the stacky star subdivision of a stacky fan}
Let $\mathbf{\Sigma}=(N, \Sigma, (v_1, \ldots, v_n))$ be a stacky fan,
where the fan $\Sigma \in N_\RR$ is not necessarily simplicial. 
\begin{definition}[stacky star subdivision of a stacky fan]\label{def.star}
  Let $\sigma$ be a cone in $\Sigma$. Let $v_0= \sum_{\{k | \in
    \sigma(1)\}} v_{k}$, and let $\rho_0$ be the ray in $N_\RR$ generated by
  $v_0$. Set $\Sigma_\sigma$ to be the fan obtained by replacing
  every cone of $\Sigma$ containing $\sigma$ with the joins of its
  faces with $\rho_\sigma$ (cf. \cite[p. 47]{Ful:93}). The stacky fan $\mbf{\Sigma_\s}=(N, \Sigma_\s, (v_0, v_1, \ldots, v_n))$ will be called the ``star subdivision of $\s$''; for brevity, mention of $\mbf{\Sigma}$ is omitted. Let $\beta_\s:{\ZZ^{n+1}} \to N$ be the
  homomorphism associated to the tuple $(v_0, v_1, \ldots, v_n)$,
  i.e. we have $\beta_\s(e_i)=v_i$ for $0\leq i\leq n$.
\end{definition}

\begin{remark}
Observe that if $\dim \s \geq 2$, the fan $\Sigma_\sigma$ has exactly
$n +1$ rays.  The cones of $\Sigma_\sigma$ can be described
as follows. For each cone $\mu$ of $\Sigma$ not containing $\sigma$
there is a corresponding cone $\mu$ of $\Sigma_\sigma$. Each cone
$\mu$ of $\Sigma$ which contains $\sigma$ is replaced by cones (which we will refer to as the ``new'' cones of $\Sigma_\s$) of the
form $\tau' = \Cone(\tau, \rho_0)$ where $\tau$ is a face of $\mu$
such that $\rho_0 \notin \tau$. Since the cones are convex, $\rho_\s
\notin \tau$ if and only if $\tau$ doesn't contain $\sigma$. Thus the
new cones of $\Sigma_\sigma$ are in bijective correspondence with the
cones of $\Sigma$ which were used in the definition of the ideal
$\tilde{I}_\sigma$.

Also observe that $\sigma$ is replaced by the cones
$\Cone(\tau,\rho_\sigma)$ where $\tau$ runs over the facets of
$\sigma$.
\end{remark}

\begin{lemma}\label{ns-drops}
If $\sigma$ is a minimal non-simplicial cone of $\Sigma$ then every
new cone $\tau'$ of $\Sigma_\sigma$ has $\ns(\tau') < \ns(\sigma)$.
\end{lemma}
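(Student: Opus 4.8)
The plan is to analyze the new cones $\tau' = \Cone(\tau,\rho_\sigma)$ of $\Sigma_\sigma$, where $\tau$ ranges over faces of some cone $\mu \supset \sigma$ with $\tau \not\supset \sigma$, and bound $\ns(\tau')$ by direct computation of the two quantities in its definition: the number of rays $|\tau'(1)|$ and the dimension $d_{\tau'}$. First I would observe that since $\rho_\sigma \notin \tau$, the rays of $\tau'$ are exactly $\tau(1) \cup \{\rho_\sigma\}$, so $|\tau'(1)| = |\tau(1)| + 1$. Consequently $\ns(\tau') = |\tau(1)| + 1 - d_{\tau'}$, and the whole problem reduces to showing $d_{\tau'} = \dim \Cone(\tau,\rho_\sigma)$ is large enough, precisely that $d_{\tau'} \geq |\tau(1)| + 1 - \ns(\sigma) + 1 = |\tau(1)| + 2 - \ns(\sigma)$... wait, more carefully: we want $|\tau(1)| + 1 - d_{\tau'} < \ns(\sigma)$, i.e. $d_{\tau'} > |\tau(1)| + 1 - \ns(\sigma)$, i.e. $d_{\tau'} \geq |\tau(1)| + 2 - \ns(\sigma)$.

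The key input is that $\sigma$ is a \emph{minimal} non-simplicial cone, so by Lemma \ref{lemma.ns1} every proper face of $\sigma$ is simplicial, and moreover by Lemma \ref{lemma.ns3} the minimal non-simplicial faces behave rigidly. The crucial geometric fact I would establish is that $v_0 = \sum_{k \in \sigma(1)} v_k$ does \emph{not} lie in the span of $\tau$ when $\tau \not\supset \sigma$. Here is the idea: write $\eta = \tau \cap \sigma$, a proper face of $\sigma$; since $\sigma$ is non-simplicial but minimal, $\eta$ is simplicial, so its rays are linearly independent, and $\eta(1) \subsetneq \sigma(1)$. If $v_0$ were in $\mathrm{span}(\tau)$, then because $v_0 = \sum_{\rho \in \sigma(1)} v_\rho$ and the rays not in $\eta$ point genuinely "out of" $\mathrm{span}(\tau) \cap \mathrm{span}(\sigma) = \mathrm{span}(\eta)$ (using strong convexity of $\sigma$ and that $\eta$ is a proper face cut out by a supporting hyperplane), one derives that the sum $\sum_{\rho \in \sigma(1) \setminus \eta(1)} v_\rho$ lies in $\mathrm{span}(\eta)$, contradicting that these rays lie strictly on the positive side of the supporting functional defining $\eta$ in $\sigma$. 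Hence $\rho_\sigma \not\subset \mathrm{span}(\tau)$, so $d_{\tau'} = d_\tau + 1$.

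With $d_{\tau'} = d_\tau + 1$ in hand, the computation finishes: $\ns(\tau') = |\tau(1)| + 1 - (d_\tau + 1) = |\tau(1)| - d_\tau = \ns(\tau)$. Now $\tau$ is a proper face of $\mu$, hence by Lemma \ref{lemma.ns1}, $\ns(\tau) \leq \ns(\mu)$; but I actually need the strict inequality $\ns(\tau') < \ns(\sigma)$. So I would refine the count: $\tau$ is a face of $\mu$ that does \emph{not} contain $\sigma$. If $\ns(\tau) \geq \ns(\sigma) = \ns(\Sigma)$ then $\ns(\tau) = \ns(\Sigma)$, making $\tau$ a non-simplicial cone; by minimality of $\sigma$ and Lemma \ref{lemma.ns2}/\ref{lemma.ns3}, $\tau$ would have to contain a minimal non-simplicial cone, and since $\tau$ and $\sigma$ sit inside the common cone $\mu$, Lemma \ref{lemma.ns3} forces this minimal non-simplicial cone to be $\sigma$ itself, so $\tau \supset \sigma$ — contradicting $\tau \not\supset \sigma$. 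Therefore $\ns(\tau') = \ns(\tau) < \ns(\sigma)$, as claimed.

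The main obstacle I anticipate is the linear-algebra lemma that $v_0 \notin \mathrm{span}(\tau)$; one must be careful that the $v_i$ are only \emph{rational} generators on the rays (and $N$ may have torsion), so the argument should be run in $N_\QQ$ or $N_\RR$ using supporting hyperplanes of the strongly convex cone $\sigma$, and one must correctly handle the face $\tau \cap \sigma$ rather than $\tau$ itself. Everything else is bookkeeping with $\ns$ and the two cited lemmas on minimal non-simplicial cones.
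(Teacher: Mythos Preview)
Your approach is essentially the paper's: show $\ns(\tau')=\ns(\tau)$ via $|\tau'(1)|=|\tau(1)|+1$ and $d_{\tau'}=d_\tau+1$, then argue $\ns(\tau)<\ns(\sigma)$ from minimality. The paper is terser---it simply asserts $\dim\tau'=\dim\tau+1$ without justification, and it deduces $\ns(\tau)<\ns(\sigma)$ directly from Lemma~\ref{lemma.ns2} (since $\ns(\mu)=\ns(\sigma)$ and $\sigma$ is the unique minimal face of $\mu$ achieving this) rather than routing through Lemma~\ref{lemma.ns3}; your argument via Lemma~\ref{lemma.ns3} is equivalent.

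One small correction in your dimension step: the intermediate claim $\operatorname{span}(\tau)\cap\operatorname{span}(\sigma)=\operatorname{span}(\tau\cap\sigma)$ can fail for faces of a common cone (take $\mu$ the cone over a square in $\RR^3$ and $\tau,\sigma$ opposite $2$-faces: their spans meet in a line while $\tau\cap\sigma=\{0\}$). The clean fix is to bypass that claim entirely. Choose $m\in\mu^\vee$ with $\tau=\mu\cap m^\perp$; then $\langle m,v_\rho\rangle\ge 0$ for every $\rho\in\sigma(1)$, with equality exactly when $\rho\in\tau(1)\cap\sigma(1)=\eta(1)$. Since $\eta\subsetneq\sigma$ there is some $\rho\in\sigma(1)\smallsetminus\eta(1)$, whence $\langle m,v_0\rangle=\sum_{\rho\in\sigma(1)}\langle m,v_\rho\rangle>0$ and $v_0\notin m^\perp\supset\operatorname{span}(\tau)$. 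This gives $d_{\tau'}=d_\tau+1$ without any hypothesis on $\eta$ being simplicial.
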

\begin{proof}
Because $\sigma$ is a minimal non-simplicial cone, every cone $\mu$ 
of $\Sigma$ containing $\sigma$ is generated by exactly $\ns(\sigma) + \dim \mu$ rays, and hence $\ns(\mu)=\ns(\sigma)$. If $\tau$ is a face of $\mu$ not containing $\rho_0$
then the the uniqueness of minimal
cones (Lemma \ref{lemma.ns2}), implies that $\ns(\tau) < \ns(\sigma).$
The new cone $\tau' = \Cone(\tau,\rho_0)$ has dimension $\dim(\tau) + 1$ and 
is spanned by $\ns(\tau) + \dim \tau + 1$ rays. Therefore 
$\ns(\tau') < \ns(\sigma)$.
\end{proof}

Next we compare the groups acting on $X(\widetilde{\Sigma})$ and
$X(\widetilde{\Sigma_\s})$. 
For notational convenience, we order the elements 
$v_1, \ldots , v_n \in N$ so that $\sigma(1) = \{\rho_1, \ldots , \rho_l\}$, where $\rho_i$ is the ray through $\overline{v_i} \in N_{\RR}$.
Choose coordinates $(t_0, \ldots , t_n)$ on $(\CC^*)^{n+1}$
and $(x_0, \ldots , x_n)$ on $\A^{n+1}$ so that $x_0$ corresponds to
$v_0$
and $x_1, \ldots , x_l$ correspond to the rays in $\sigma(1)$.
Consider the exact sequence
\begin{equation}
1  \to \CC^* \stackrel{\lambda_0} \to (\CC^*)^{n+1} \stackrel{\theta}\to (\CC^*)^{n} \to
1
\end{equation}
where $\lambda_0(t) = (t^{-1}, t, \ldots ,t,1 \ldots 1)$ - with $l$ copies of $t$ and $n-l$ $1$'s -  
and $\theta(t_0, \ldots, t_n) = (t_0t_1, \ldots ,  t_0t_l, t_{l+1}, \ldots
t_n)$.
\begin{lemma}
The map $\theta$ restricts to a map $G_{{\mathbf \Sigma_\sigma}} \to
G_{{\mathbf \Sigma}}$ and induces a short exact sequence
\begin{equation} \label{seq.basicG}
1\to \CC^* \to G_{{\mathbf \Sigma}_\sigma} \to G_{{\mathbf \Sigma}} \to 1
\end{equation}
such that the diagram below commutes.
\begin{equation} \label{diag.diaggps}
\xymatrix{
1 \ar[r] & \CC^* \ar[r]\ar[d]^= & G_{{\mathbf \Sigma}_\sigma}
\ar[r]\ar[d]^{(\beta_\sigma)^\vee} & G_{{\mathbf \Sigma}} \ar[r]\ar[d]^{\beta^\vee}
& 1 \\
1 \ar[r] & \CC^* \ar[r]^{\lambda_0} & (\CC^*)^{n+1} \ar[r]^{\theta} &
(\CC^*)^n \ar[r] & 1
}
\end{equation}
\end{lemma}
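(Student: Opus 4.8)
The plan is to verify the claim by a direct computation with the two presentations of the Gale duals, using the commutative diagram \eqref{diag.diaggps} as the organizing principle: once we know that the rightmost square commutes and that the rows are exact, a diagram chase produces the map $G_{\mathbf{\Sigma}_\sigma} \to G_{\mathbf{\Sigma}}$ together with its kernel $\CC^*$. First I would recall that $G_{\mathbf{\Sigma}} = \Hom(\DG(\beta),\CC^*)$ and $G_{\mathbf{\Sigma}_\sigma} = \Hom(\DG(\beta_\sigma),\CC^*)$, and that the maps $\beta^\vee$ and $(\beta_\sigma)^\vee$ appearing in the bottom-right and middle-right positions are obtained by applying $\Hom(-,\CC^*)$ to the maps $(\ZZ^n)^* \to \DG(\beta)$ and $(\ZZ^{n+1})^* \to \DG(\beta_\sigma)$. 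So the real content is to produce a map $\DG(\beta) \to \DG(\beta_\sigma)$ compatible with the inclusion $\ZZ^n \hookrightarrow \ZZ^{n+1}$ (the one dual to $\theta$ on characters), fitting into a short exact sequence $0 \to \DG(\beta) \to \DG(\beta_\sigma) \to \ZZ \to 0$; applying $\Hom(-,\CC^*)$ (which is exact on finitely generated abelian groups because $\CC^*$ is divisible) then yields \eqref{seq.basicG} and the commutativity of \eqref{diag.diaggps}.

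The key computation is therefore at the level of Gale duals. Using the explicit presentation from the excerpt: choose a projective resolution $0 \to \ZZ^r \xrightarrow{[Q]} \ZZ^{d+r} \to N \to 0$ and a lift $B\colon \ZZ^n \to \ZZ^{d+r}$ of $\beta$, so that $\DG(\beta) = \coker\big((\ZZ^{d+r})^* \xrightarrow{[BQ]^*} (\ZZ^{n+r})^*\big)$. Since $v_0 = v_1 + \cdots + v_l$ (with the chosen ordering $\sigma(1) = \{\rho_1,\dots,\rho_l\}$), the \emph{same} resolution of $N$ works for $\beta_\sigma$, and a lift $B_\sigma\colon \ZZ^{n+1} \to \ZZ^{d+r}$ of $\beta_\sigma$ can be chosen with $B_\sigma(e_0) = B(e_1) + \cdots + B(e_l)$ and $B_\sigma(e_i) = B(e_i)$ for $1 \le i \le n$. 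One then checks directly that the column-addition corresponding to replacing the $e_0$-column by the sum of the $e_1,\dots,e_l$-columns is exactly the elementary transformation realizing $\theta$ (equivalently its dual), and this identifies $\coker([B_\sigma\,Q]^*)$ with $\coker([B\,Q]^*) \oplus \ZZ$ — the extra $\ZZ$ summand coming from the basis vector $e_0^*$ that is not hit. This gives the short exact sequence $0 \to \DG(\beta) \to \DG(\beta_\sigma) \to \ZZ \to 0$, with the surjection to $\ZZ$ dual to $\lambda_0$.

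The main obstacle — really the only place requiring care — is bookkeeping the signs and the precise form of $\lambda_0$, namely the appearance of $t^{-1}$ in the first coordinate. This is forced: the character lattice map dual to $\theta$ sends the coordinate characters so that $e_0^*$ (dual to the $x_0$/$v_0$ coordinate) maps to $e_1^* + \cdots + e_l^*$ in the appropriate sense, and tracking this through $\Hom(-,\CC^*)$ produces the inverse on the $\CC^*$ factor. I would verify this by checking that $\theta \circ \lambda_0$ is trivial and that $\lambda_0$ is injective with image exactly $\ker\theta$ (immediate from the displayed formulas), and then that $\theta$ carries $G_{\mathbf{\Sigma}_\sigma} = \Hom(\DG(\beta_\sigma),\CC^*) \subset (\CC^*)^{n+1}$ into $G_{\mathbf{\Sigma}} \subset (\CC^*)^n$ by composing with $\beta^\vee$ and $(\beta_\sigma)^\vee$ and using the identification above. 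Exactness of $\Hom(-,\CC^*)$ on the exact sequence of Gale duals then gives \eqref{seq.basicG}, and the commutativity of \eqref{diag.diaggps} is exactly the compatibility of the chosen lifts $B, B_\sigma$ with the column operation, so the lemma follows.
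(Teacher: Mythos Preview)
Your proposal is correct and follows essentially the same route as the paper: build the short exact sequence $0 \to \DG(\beta) \to \DG(\beta_\sigma) \to \ZZ \to 0$ of Gale duals compatibly with the lattice sequence dual to $\theta$, then apply $\Hom(-,\CC^*)$ (exact since $\CC^*$ is divisible) to obtain \eqref{seq.basicG} and \eqref{diag.diaggps}. The only difference is that where you compute the cokernel $\DG(\beta_\sigma)\cong \DG(\beta)\oplus\ZZ$ by an explicit column operation on the presentation $[B_\sigma\,Q]^*$, the paper instead packages the map $\ZZ^{n+1}\to\ZZ^n$ and $\beta,\beta_\sigma$ into a commutative diagram of maps with finite cokernels and invokes \cite[Lemma~2.3]{BCS:05} to obtain the exact sequence of Gale duals directly.
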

\begin{proof}
Consider the short exact sequence of free abelian groups
$$ 0 \to \ZZ \to \ZZ^{n+1} \to \ZZ^{n} \to 0$$
where the first map is given by $k \mapsto (-k,k, \ldots , k, 0,
\ldots 0)$ and the second is given by $(a_0, a_1, \ldots , a_n)
\mapsto
(a_0 + a_1, \ldots , a_0 + a_l, a_{l+1}, \ldots , a_n)$. There is a
commutative diagram of exact sequences.
\begin{equation} \label{diag.betas}
\xymatrix{
0 \ar[r] & \ZZ \ar[r] \ar[d] & (\ZZ)^{n+1} \ar[d]^{\beta_\sigma} \ar[r] &
\ZZ^n \ar[d]^\beta \ar[r] & 0\\
0 \ar[r] & 0 \ar[r] & N \ar[r]^{=} & N \ar[r] & 0
}
\end{equation}
The Gale dual of the map $\ZZ \to 0$ is the identity map $\ZZ^* \to
\ZZ^*$. Moreover, all of the vertical arrows in \eqref{diag.betas}
have finite cokernels. Thus,
by \cite[Lemma 2.3]{BCS:05} we obtain a commutative diagram with exact
rows.
\begin{equation} \label{diag.dgs} 
\xymatrix{0 \ar[r] & (\ZZ^*)^n \ar[r] \ar[d]^{\beta^\vee} &
  (\ZZ^*)^{n+1} 
\ar[d]^{{\beta_\sigma}^\vee}
    \ar[r] &
    \ZZ^* \ar[d]^{=} \ar[r] & 0\\
    0 \ar[r] & \DG(\beta)\ar[r] & \DG(\beta_\sigma) \ar[r] & \ZZ^* \ar[r] & 0
  } 
\end{equation}
Applying the functor $\Hom(\_, \CC^*)$ to \eqref{diag.dgs}
  yields \eqref{diag.diaggps}.
\end{proof}

\begin{definition}[Toric Reichstein transform]\label{def.reich-l}
  Given a stacky fan $\mbf{\Sigma}$ and a cone $\s$ in the underlying fan $\Sigma$, let $Y_\sigma$ be
  the blow-up of $X(\widetilde{\Sigma})=\A^{n} \smallsetminus
  Z_\Sigma$ along the linear subspace $L_\sigma$ and let $Y'_\sigma$
  be Reichstein transform (Definition \ref{def.reich}) of
  $X(\widetilde{\Sigma})$ relative to the linear subspace
  $L_\sigma$. The quotient stack $[Y'_\sigma/G_{{\mathbf
      \Sigma}}]$ is called the {\em toric Reichstein transform} of
  ${\mathcal X}({\mathbf \Sigma})$ relative to the cone $\sigma$, or relative to the substack $[L_\sigma/G_{\mbf{\Sigma}}] \subset {\mathcal X}({\mathbf \Sigma})$.
\end{definition}

\begin{remark}
$Y'_\sigma = Y_\sigma$ if and only if $\tilde{L}_\sigma =  L_\sigma$
which is equivalent (by Lemma \ref{lem.ltilde=l}) to the condition that 
every cone containing $\sigma$ is simplicial.
\end{remark}

We now come to our main technical result. It shows that the toric
Reichstein transform of a toric stack is again a toric stack. As a
result the toric Reichstein transform of a toric stack also has a good
moduli space.  The proof will be given in Section
\ref{subsec.proofofthmtechnical}.

\begin{theorem} \label{thm.technical}
Let $\mathbf{\Sigma} = (N,\Sigma, \beta)$ be a stacky fan, and let
$\s$ be a cone in $\Sigma$. Then the quotient stack
$[Y'_\sigma/G_{\mathbf \Sigma}]$
obtained from the Reichstein transform of ${\mathcal X}({\mathbf
  \Sigma})$
relative to the substack $[L_\sigma/G_{\mathbf \Sigma}]$
is isomorphic to the toric stack $\mc{X}(\mbf{\Sigma_\sigma})$ formed by stacky star subdivision of $\s$ (Definition \ref{def.star}), i.e. there is an isomorphism of stacks ${\mathcal X}(\mbf{\Sigma_\sigma})\simeq
[Y'_\sigma/G_{\mbf{\Sigma}}]$.
\end{theorem}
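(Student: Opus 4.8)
The plan is to factor the quotient presenting ${\mathcal X}(\mathbf{\Sigma_\sigma})$ through an intermediate $\CC^*$-quotient, identify that $\CC^*$-quotient with $Y'_\sigma$, and then invoke the extension \eqref{seq.basicG}. Keep the coordinates fixed before the statement, so $x_0$ corresponds to $v_0$ and $x_1,\ldots,x_l$ to the rays of $\sigma$. By diagram \eqref{diag.diaggps} the subgroup $\CC^*\subset G_{\mathbf{\Sigma_\sigma}}$ of \eqref{seq.basicG} acts on $\A^{n+1}$ through $\lambda_0$, i.e.\ by $t\cdot(x_0,x_1,\ldots,x_n)=(t^{-1}x_0,tx_1,\ldots,tx_l,x_{l+1},\ldots,x_n)$. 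First I would record the standard fact that for this weighted action the geometric quotient of $\{(x_1,\ldots,x_l)\neq 0\}\subset\A^{n+1}$ is the blow-up $\Bl_{\VV(x_1,\ldots,x_l)}\A^n$, realized as a $\CC^*$-quotient with blow-down map $\vartheta\colon(x_0,\ldots,x_n)\mapsto(x_0x_1,\ldots,x_0x_l,x_{l+1},\ldots,x_n)$ and exceptional divisor $\{x_0=0\}$; this blow-down satisfies $\vartheta(g\cdot x)=\theta(g)\cdot\vartheta(x)$ for $g\in(\CC^*)^{n+1}$. Restricting $\vartheta$ over $X(\widetilde{\Sigma})=\A^n\smallsetminus Z_\Sigma$ and putting $W_\Sigma=\vartheta^{-1}(X(\widetilde{\Sigma}))$ gives $Y_\sigma=[W_\Sigma/\CC^*]$; by \eqref{diag.diaggps}, under $\theta$ the image $(\beta_\sigma)^\vee(G_{\mathbf{\Sigma_\sigma}})$ maps onto $\beta^\vee(G_{\mathbf{\Sigma}})$, so the quotient map $W_\Sigma\to Y_\sigma$ carries the $G_{\mathbf{\Sigma_\sigma}}$-action to the $G_{\mathbf{\Sigma}}=G_{\mathbf{\Sigma_\sigma}}/\CC^*$-action on $Y_\sigma$ lifted from the $\beta^\vee$-action on $X(\widetilde{\Sigma})$ (using that $L_\sigma$ is $G_{\mathbf{\Sigma}}$-invariant).

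With this setup the theorem reduces to one claim: inside $\A^{n+1}$ one has $X(\widetilde{\Sigma_\sigma})=W_\Sigma\smallsetminus\VV(\tilde{I}_\sigma)$, and $\VV(\tilde{I}_\sigma)\cap W_\Sigma$ is the preimage in $W_\Sigma$ of the strict transform $\tilde{L}_\sigma'$. Granting this, $\CC^*$ acts freely on $X(\widetilde{\Sigma_\sigma})$ (which lies in $\{(x_1,\ldots,x_l)\neq 0\}$), so $X(\widetilde{\Sigma_\sigma})\to X(\widetilde{\Sigma_\sigma})/\CC^*$ is a $\CC^*$-torsor and $X(\widetilde{\Sigma_\sigma})/\CC^*=Y_\sigma\smallsetminus\tilde{L}_\sigma'=Y'_\sigma$; then the standard fact that for a group extension $1\to K\to G\to G/K\to 1$ with $K$ acting freely one has $[Y/G]\simeq[(Y/K)/(G/K)]$ gives ${\mathcal X}(\mathbf{\Sigma_\sigma})=[X(\widetilde{\Sigma_\sigma})/G_{\mathbf{\Sigma_\sigma}}]\simeq[(X(\widetilde{\Sigma_\sigma})/\CC^*)/G_{\mathbf{\Sigma}}]\simeq[Y'_\sigma/G_{\mathbf{\Sigma}}]$, compatibly with the actions by the first paragraph. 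The assertion about the strict transform is routine once the rest is known: $\vartheta$ pulls each monomial generator $x^{\hat{\tau}}_\mu$ of $\tilde{I}_\sigma$ back to $x_0^{e}\,x^{\hat{\tau}}_\mu$ with $e=|\sigma(1)\smallsetminus\tau(1)|\geq 1$ (because $\sigma(1)\subseteq\mu(1)$ while $\sigma(1)\not\subseteq\tau(1)$), so the total transform $\vartheta^{-1}(\tilde{L}_\sigma)$ equals $\{x_0=0\}\cup\VV(\tilde{I}_\sigma)$; since the exceptional divisor is $\{x_0=0\}$ and no component of $\VV(\tilde{I}_\sigma)$ lies in it, the strict transform is cut out on $W_\Sigma$ by $\tilde{I}_\sigma$.

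The substance of the proof, and the step I expect to be the main obstacle, is the equality $X(\widetilde{\Sigma_\sigma})=W_\Sigma\smallsetminus\VV(\tilde{I}_\sigma)$, equivalently the set identity $Z_{\Sigma_\sigma}=\VV_{\A^{n+1}}(\tilde{I}_\sigma)\cup\vartheta^{-1}(Z_\Sigma)$ (with $\tilde{I}_\sigma$ read in $\CC[x_0,\ldots,x_n]$) together with $\A^{n+1}\smallsetminus W_\Sigma\subseteq Z_{\Sigma_\sigma}$. This is exactly the combinatorics of star subdivision. I would compute $B_{\Sigma_\sigma}$ from the cone structure of $\Sigma_\sigma$ in the Remark following Definition \ref{def.star}: a cone $\mu$ of $\Sigma$ not containing $\sigma$ persists in $\Sigma_\sigma$ and contributes the generator $x_0\,x^{\hat{\mu}}$, whereas a ``new'' cone $\Cone(\tau,\rho_0)$ contributes $x^{\hat{\tau}}=x^{\hat{\tau}}_\mu\,x^{\hat{\mu}}$, in which the monomials $x^{\hat{\tau}}_\mu$ are exactly the generators of $\tilde{I}_\sigma$ from Definition \ref{def.itilde}; comparing this with $\vartheta^{*}B_\Sigma=\langle x_0^{|\sigma(1)\smallsetminus\mu(1)|}\,x^{\hat{\mu}}:\mu\in\Sigma\rangle$ and using $\tilde{L}_\sigma=\VV(\tilde{I}_\sigma)$ (Proposition \ref{prop.ltilde}) gives the identity. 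Concretely, for $x\in\A^{n+1}$ with vanishing set $J_x=\{\rho\in\Sigma_\sigma(1):x_\rho(x)=0\}$ one shows $J_x$ spans no cone of $\Sigma_\sigma$ iff either $J_x\cap\Sigma(1)$ spans no cone of $\Sigma$ --- which, after tracking the $x_0$-powers introduced by $\vartheta$, means $\vartheta(x)\in Z_\Sigma$ --- or $J_x\cap\Sigma(1)$ spans a cone of $\Sigma$ containing $\sigma$ --- which, by the argument of Step II in the proof of Proposition \ref{prop.ltilde} together with Lemma \ref{lemma.key-inclusion}, means $x\in\VV(\tilde{I}_\sigma)$; the bookkeeping for $\rho_0\in J_x$ is forced because the cones of $\Sigma_\sigma$ containing $\rho_0$ are exactly the $\Cone(\tau,\rho_0)$, and $\{\rho_1,\ldots,\rho_l\}$ spans no cone of $\Sigma_\sigma$ (it is the subdivided $\sigma$), which also yields $\VV(x_1,\ldots,x_l)\subseteq Z_{\Sigma_\sigma}$ and hence $\A^{n+1}\smallsetminus W_\Sigma\subseteq Z_{\Sigma_\sigma}$. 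Once this is established, the reductions above finish the proof. (If $\dim\sigma\leq 1$ the blow-up is an isomorphism and the same steps apply verbatim; for Theorem \ref{thm.torickirwan} only $\dim\sigma\geq 2$ occurs.)
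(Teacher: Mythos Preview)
Your proposal is correct and follows essentially the same strategy as the paper: realize $\Bl_{L_\sigma}\A^n$ as the $\lambda_0(\CC^*)$-quotient of $\{(x_1,\ldots,x_l)\neq 0\}\subset\A^{n+1}$, identify $X(\widetilde{\Sigma_\sigma})$ with the preimage of $Y'_\sigma$ under this torsor, and then invoke the extension \eqref{seq.basicG} to pass to stacks. The only difference is packaging of the key combinatorial step---where the paper proves $p^{-1}(Y'_\sigma)=X(\widetilde{\Sigma_\sigma})$ by a pointwise case analysis on and off the exceptional divisor (Lemma~\ref{lem.torsor}), you recast it as the set identity $Z_{\Sigma_\sigma}=\VV_{\A^{n+1}}(\tilde I_\sigma)\cup\vartheta^{-1}(Z_\Sigma)$ together with an ideal-theoretic identification of the strict transform via Proposition~\ref{prop.ltilde}; the underlying verification is the same.
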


\begin{remark}
  If one is interested primarily in toric varieties and not in toric
  stacks, the above result says that the toric variety $X(\Sigma_\s)$
  formed by star subdivision of the cone $\s$ is isomorphic to the
  quotient by $G_\Sigma$ of $Y_\s'$, the Reichstein transform of $L_\s
  \subset X(\widetilde{\Sigma})$. However, as we show in Example
  \ref{ex.stack}, the stack language is convenient and natural for
  describing some proper birational morphisms between toric varieties.
\end{remark}

\subsection{Examples}

\begin{example}\label{ex.stack}
  Let $\Sigma \subset \RR^2$ be the fan with a single maximal cone
  generated by $v_1 = e_1$ and $v_2 = e_1 + 2e_2$. 
 Let $\Sigma'$
be the star subdivision of $\Sigma$.  Then $X(\Sigma')$
  is a smooth toric variety with maximal cones $\sigma_1 =
  \Cone(e_1,e_1 + e_2)$ and $\sigma_2 =\Cone(e_1 + e_2, e_1 +
  2e_2)$. There is a map of toric varieties $X(\Sigma') \to X(\Sigma)$
  but there is no map of Cox stacks ${\mathcal X}(\Sigma') \to
  {\mathcal X}(\Sigma)$ because the vector $e_1 + e_2$ is not an
  integral linear combination of $v_1$ and $v_2$. However if we let
  ${\mathbf \Sigma'} = (\ZZ^2, \Sigma', 2(e_1 + e_2), e_1, e_2)$ then
  the map of toric varieties $X(\Sigma') \to X(\Sigma)$ is induced by
  map of toric stacks ${\mathcal X}({\mathbf \Sigma'})\to {\mathcal
    X}(\Sigma)$ corresponding to the Reichstein transform of the stack
  ${\mathcal X}(\Sigma)$ relative to the maximal cone $\sigma$ of
  $\Sigma$.  \end{example}

If ${\mathbf \Sigma} = (N,\Sigma, v_1, \ldots , v_n)$ is a toric fan
then the good moduli space of ${\mathcal X} ({\mathbf \Sigma})$ is the
  toric variety $X(\Sigma)$ and does not depend on the choice of
  elements $v_1, \ldots , v_n \in N$. However, as the next example shows,
the toric variety
 associated to a stacky subdivision relative to a cone $\sigma$
depends on the choice of $v_i$ corresponding to the rays of $\sigma$.
  
\begin{example} \label{ex.stacktwo}
 Let $\Sigma \in \RR^3$ be the fan with a single maximal cone $\s$
  generated by $v_1=e_1, v_2=e_2, v_3=
e_3, v_4=e_1-e_2+e_3$, where the $e_i$ are the
  standard basis vectors and let ${\mathcal X}(\Sigma)$ be the Cox
  stack of $X(\Sigma)$. 
The toric variety $X(\Sigma)$ is the quadric
  cone. Let $\Sigma'$ be the star subdivision of $\sigma$. Then
  $\Sigma'$ is a smooth (hence simplicial) toric variety. Since $v_0 =
  v_1 +v_2 + v_3+ v_4 = 2e_1 + 2e_3$ is not primitive, the  map of
  toric varieties $X(\Sigma') \to X(\Sigma)$ is  induced by a map
of stacks ${\mathcal
    X}({\mathbf \Sigma'}) \to {\mathcal X}(\Sigma)$ where
${\bf \Sigma'} = (\ZZ^3, \Sigma', 2e_1 + 2e_2, e_1,e_2,e_3,e_1 -e_2 + e_3)$
is the stacky star subdivision of the stacky fan ${\bf \Sigma} =
(\ZZ^3, \Sigma, e_1, e_2,e_3, e_1 -e_3 + e_3)$.

Now consider the stacky fan $\mbf{\Sigma}=(\ZZ^3, \Sigma, \{2e_1,
  2e_2, e_3, v_4\})$ 
The underlying toric variety of ${\mathcal X}({\bf \Sigma})$ is also
  $X(\Sigma)$, but the stacky star subdivision of $\mbf{\Sigma}$
relative to $\sigma$  gives the stacky fan $\mbf{\Sigma_\s}=(\ZZ^3, \Sigma_\s,
  \{3e_1+e_2+e_3,2e_1, 2e_2, e_3, v_4\})$, where $\Sigma_\s$ is the fan
  formed by star subdividing $\Sigma$ along the ray through
  $3e_1+e_2+e_3$. We have a commutative diagram

$\xymatrix{
\mc{X}(\mbf{\Sigma_\s})  \ar[r]^{\tilde{f}} \ar[d]_{q(\Sigma_\s)} &\mc{X}(\mbf{\Sigma}) \ar[d]^{q(\Sigma)}\\
X(\Sigma_\s) \ar[r]_{f} &X(\Sigma)
}$

\noindent where $\tilde{f}$ is birational,  $\mc{X}(\mbf{\Sigma_\s})$ is isomorphic to the Reichstein transform of $[L_\s/G_\Sigma] \subset \mc{X}(\mbf{\Sigma})$, and $f$ is a proper birational map between toric varieties. 
\end{example}

Suppose that ${\mathbf \Sigma} = (\Sigma, N, v_1, \ldots , v_n)$ is a
stacky fan.  If $\sigma = \rho$ is a ray in $\Sigma$ then $L_\sigma$
is a Cartier divisor in $X(\widetilde{\Sigma})$ so the blow-up of
$X(\widetilde{\Sigma})$ along $L_\sigma$ is identified with
$X(\widetilde{\Sigma})$. However, the hyperplane $L_\sigma$ need not
be saturated with respect to the quotient map $X(\widetilde{\Sigma})
\to X(\Sigma)$, so the Reichstein transformation of $L_\sigma$ may be
a proper open set in $X(\widetilde{\Sigma})$. As the next example
shows, this phenomenon is related to wall crossing in geometric
invariant theory and will be discussed further in Section
\ref{sec.divreich}.

\begin{example} \label{ex.stack3}
Let $\Sigma$ be the fan in $\RR^3$ considered in Example \ref{ex.stacktwo}. 
Let ${\mathcal X}$ be Cox stack of the toric variety $X(\Sigma)$. Then
${\mathcal X} = [\A^4/\CC^*]$ where $\CC^*$ acts with weights
$(1,-1,1,-1)$. If $L$ is any of the coordinate hyperplanes in $\A^4=\Spec \CC[x_1, x_2, x_3, x_4]$ then $L$ is
not
saturated so the Reichstein transform with respect to $L$ is an open
set in $\A^4$. For example, if $L = L_1 = \VV(x_1)$ then $\tilde{L} =
\VV(x_1) \cup \VV(x_2,x_4)$. Thus the Reichstein transform of
${\mathcal X}(\Sigma)$ with respect to $L$ produces the quotient stack
$[(\A^4 \smallsetminus \VV(x_2,x_4))/\CC^*]$. This is the Cox stack
of the toric variety $X(\Sigma')$ where $\Sigma'$ is the fan obtained
by subdividing $\sigma$ into two cones via the subdivision relative to
the lattice point $v = e_1$. 

This example can be interpreted in terms of change of linearizations
in Geometric Invariant Theory (cf. \cite[Example 1.16]{Tha:96}). The
quotient $\A^4/\CC^*$ is the GIT quotient of $\A^4$ linearized with
respect to the trivial character, while the quotient $(\A^4
\smallsetminus \VV(x_2,x_4))/\CC^*$ is the GIT quotient of $\A^4$
with respect to the character of weight $1$.  Likewise, the GIT quotient
$(\A^4 \smallsetminus \VV(x_1,x_3))/\CC^*$ corresponds to the GIT quotient
where the linearization has weight $-1$. The second quotient corresponds
to the Reichstein transformation with respect to the divisor $L_2 =
\VV(x_2)$.
\end{example}

\subsection{Reichstein transforms and the factorization of birational
  toric morphisms}
The phenomenon of Examples \ref{ex.stack}--\ref{ex.stack3}
can be generalized.
\begin{proposition} \label{prop.subdivision}
Let $\Sigma$ be a fan in a lattice $N$ and let $\Sigma'$ be the subdivision
of $\Sigma$ relative to a ray $\rho_0 \in \vert \Sigma \vert$ in the support of $\Sigma$. Then there exist stacky fans ${\mathbf \Sigma}= (N,\Sigma, v_1,\ldots , v_n)$ and ${\mathbf \Sigma'} = (N, \Sigma', v_0, v_1, \ldots , v_n)$ with $v_0 =
v_1  + \ldots + v_l$ where $v_0$ is a lattice point on $\rho_0$ and $v_1, \ldots , v_l$ are lattice points on the rays of a cone $\sigma \in \Sigma$. Hence for any toric blow-up $X(\Sigma') \to
X(\Sigma)$ there is a toric Reichstein transform of toric  stacks
${\mathcal X}' \to {\mathcal X}$ and a commutative diagram of stacks
and good moduli spaces

$$\xymatrix{
{\mathcal X}'  \ar[r]\ar[d] & {\mathcal X}\ar[d]\\
X(\Sigma') \ar[r] & X(\Sigma)
}$$
\end{proposition}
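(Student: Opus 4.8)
The statement to prove is Proposition \ref{prop.subdivision}, which is really a packaging result: given an abstract star subdivision $\Sigma' = \Sigma_\sigma$ of a fan along a ray $\rho_0$ in the support of $\Sigma$, I want to exhibit stacky structures on $\Sigma$ and $\Sigma'$ that are related by a \emph{stacky} star subdivision in the sense of Definition \ref{def.star}, and then invoke Theorem \ref{thm.technical} to realize the toric morphism $X(\Sigma') \to X(\Sigma)$ as the good-moduli-space map of a toric Reichstein transform. The key point is purely combinatorial: I need to choose lattice points $v_1, \dots, v_n$ on the rays of $\Sigma$ so that, for the cone $\sigma \in \Sigma$ that gets subdivided (i.e. the minimal cone of $\Sigma$ whose relative interior contains $\rho_0$), the vector $v_1 + \cdots + v_l$ (sum over the rays of $\sigma$) lies on $\rho_0$. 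Once that is arranged, setting $v_0 = v_1 + \cdots + v_l$ makes $\mathbf{\Sigma}_\sigma$ (Definition \ref{def.star}) equal to the desired $\mathbf{\Sigma}'$, and Theorem \ref{thm.technical} gives the isomorphism ${\mathcal X}(\mathbf{\Sigma}') \simeq [Y'_\sigma / G_{\mathbf{\Sigma}}]$.

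\textbf{First step: identify $\sigma$.} Given the ray $\rho_0 \subset |\Sigma|$, let $\sigma$ be the unique minimal cone of $\Sigma$ containing a point of $\rho_0 \setminus \{0\}$ in its relative interior; this is exactly the cone for which the star subdivision $\Sigma_\sigma$ (in Fulton's sense, \cite[p. 47]{Ful:93}) agrees with the given $\Sigma'$. Let $\rho_1, \dots, \rho_l$ be the rays of $\sigma$ and $u_i$ the primitive lattice generator of $\rho_i$.

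\textbf{Second step: adjust the lattice points.} Pick any positive lattice point $w$ on $\rho_0$, so $w$ lies in the relative interior of $\sigma$; hence $w = \sum_{i=1}^l c_i u_i$ with all $c_i > 0$ rational, and after scaling $w$ by a positive integer we may take all $c_i \in \ZZ_{>0}$. Now set $v_i := c_i u_i$ for $1 \le i \le l$ (a lattice point on $\rho_i$), and set $v_i := u_i$ (the primitive generator) for $l < i \le n$. Then $v_1 + \cdots + v_l = w \in \rho_0$, as required. Let $v_0 := w = v_1 + \cdots + v_l$. Define $\mathbf{\Sigma} := (N, \Sigma, (v_1, \dots, v_n))$ and note that $\mathbf{\Sigma}_\sigma = (N, \Sigma_\sigma, (v_0, v_1, \dots, v_n)) = (N, \Sigma', (v_0, v_1, \dots, v_n))$ is exactly the candidate $\mathbf{\Sigma}'$, since the underlying fan $\Sigma_\sigma$ obtained by joining faces of cones containing $\sigma$ with $\rho_0$ coincides with the prescribed subdivision $\Sigma'$.

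\textbf{Third step: apply the technical theorem and read off the diagram.} By Theorem \ref{thm.technical} the toric stack ${\mathcal X}(\mathbf{\Sigma}')= {\mathcal X}(\mathbf{\Sigma}_\sigma)$ is isomorphic to the quotient stack $[Y'_\sigma / G_{\mathbf{\Sigma}}]$, where $Y'_\sigma$ is the Reichstein transform of $X(\widetilde{\Sigma})$ relative to the linear subspace $L_\sigma$ (Definition \ref{def.reich-l}); thus the structure map ${\mathcal X}' := {\mathcal X}(\mathbf{\Sigma}') \to {\mathcal X} := {\mathcal X}(\mathbf{\Sigma})$ is a toric Reichstein transform. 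Passing to good moduli spaces, the earlier propositions identify the good moduli space of ${\mathcal X}(\mathbf{\Sigma})$ with $X(\Sigma)$ and that of ${\mathcal X}(\mathbf{\Sigma}')$ with $X(\Sigma') = X(\Sigma_\sigma)$, and functoriality of the good-moduli-space construction with respect to the compatible lattice map gives the commuting square with vertical arrows the quotient maps and bottom arrow the toric morphism $X(\Sigma') \to X(\Sigma)$ induced by $\id_N$ (compatible with $\Sigma'$ and $\Sigma$ since $\Sigma'$ refines $\Sigma$). Finally, since every toric blow-up $X(\Sigma') \to X(\Sigma)$ along (the closure of) an orbit is, up to iterating, a star subdivision along a ray \cite[p. 47]{Ful:93}, the construction applies to any such morphism.

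\textbf{Main obstacle.} The only subtle point is the compatibility of the two "$\sigma$'s": one must check that the cone $\sigma$ singled out by the ray $\rho_0$ is genuinely a cone of $\Sigma$ (not merely a subset of $|\Sigma|$) and that Fulton's star subdivision $\Sigma_\sigma$ along $\rho_0$ is literally the fan $\Sigma'$ we were handed — i.e. that "subdivision relative to a ray $\rho_0$" in the hypothesis is the same operation as replacing each cone $\mu \supseteq \sigma$ by the joins of its faces not containing $\sigma$ with $\rho_0$. This is standard but should be spelled out: the minimality of $\sigma$ forces $\rho_0$ to meet only the cones containing $\sigma$, so the two subdivisions modify exactly the same cones in exactly the same way. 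Everything else (choice of the $v_i$, invocation of Theorem \ref{thm.technical}, functoriality of good moduli spaces) is routine.
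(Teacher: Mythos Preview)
Your proposal is correct and follows essentially the same approach as the paper: identify the minimal cone $\sigma$ containing $\rho_0$ in its relative interior, express a lattice point on $\rho_0$ as a positive rational combination of the primitive generators of the rays of $\sigma$, clear denominators to obtain integral $v_i$, and then invoke Theorem~\ref{thm.technical}. The paper's proof is terser (it stops once $v_0 = v_1 + \cdots + v_l$ is established), while you spell out the application of Theorem~\ref{thm.technical} and the resulting diagram, but the argument is the same.
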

\begin{proof}
Let $\sigma$ be the unique cone of $\Sigma$ containing $\rho_0$ in its
interior. If $\rho_1, \ldots , \rho_l$ are the rays of $\sigma$ and $w_i$ is the primitive lattice point on
$\rho_i$ then we can write $w_0 = \sum_{i=1}^l a_iw_i$ where the $a_i \in {\mathbb Q}_{>0}$.
Clearing denominators we see that $b_0w_0 = \sum b_i w_i$ with $b_i \in
\ZZ_{>0}$. If we let $v_i = b_i w_i$ for $i = 0, \ldots , l$ and let
$v_{l+1}, \ldots , v_{n}$ be any lattice vectors on the remaining rays
$\rho_{l+1}, \ldots , \rho_n$ of $\Sigma$ then $v_0 = v_1 + \ldots + v_l$.
\end{proof}

Combining Proposition \ref{prop.subdivision} with weak factorization
for birational toric morphisms \cite[Theorem A]{Wlo:97} yields the
following result relating birational toric morphisms and toric
Reichstein transformations.
\begin{corollary} \label{cor.factorization} Let $\Sigma', \Sigma''$ be
  two fans in $\RR^d$ such that $|\Sigma'| = |\Sigma'|$.  Then there
  exists a sequence of stacky fans ${\mathbf \Sigma_i}=(\mathbb{Z},
  \Sigma_i, \beta_i)$, $i=0, \ldots, n$, such that $\Sigma_0=\Sigma'$
  and $\Sigma_n=\Sigma''$ such that the  toric stack ${\mathcal X}({\mathbf
    \Sigma}_i)$ is related to  ${\mathcal X}({\mathbf
    \Sigma}_{i-1})$ by a composition of toric Reichstein transformations
  for $i=1, \ldots, n$.
\end{corollary}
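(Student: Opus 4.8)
The plan is to deduce Corollary \ref{cor.factorization} from the weak factorization theorem for birational toric morphisms together with Proposition \ref{prop.subdivision}, by first reducing the general birational situation to a sequence of elementary star subdivisions and their inverses. First I would invoke \cite[Theorem A]{Wlo:97}: since $|\Sigma'| = |\Sigma''|$ (I read the typo ``$|\Sigma'| = |\Sigma'|$'' as ``$|\Sigma'| = |\Sigma''|$''), there is a sequence of fans $\Sigma' = \Sigma_0, \Sigma_1, \ldots, \Sigma_n = \Sigma''$ all with the same support, such that consecutive fans in the sequence are related by a single \emph{star subdivision} (in one direction or the other) along a ray. Thus it suffices to handle the case of a single star subdivision $\Sigma_{i} \to \Sigma_{i-1}$ relative to a ray $\rho_0$, in either orientation.

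Next I would apply Proposition \ref{prop.subdivision} to each such elementary step. Given the star subdivision $\Sigma_i$ of $\Sigma_{i-1}$ along $\rho_0$, the Proposition produces stacky fans ${\mathbf \Sigma}_{i-1} = (N, \Sigma_{i-1}, v_1, \ldots, v_n)$ and ${\mathbf \Sigma}_i = (N, \Sigma_i, v_0, v_1, \ldots, v_n)$ with $v_0 = v_1 + \cdots + v_l$ the sum of the chosen lattice vectors on the rays of the cone $\sigma \in \Sigma_{i-1}$ subdivided by $\rho_0$. By Theorem \ref{thm.technical}, ${\mathcal X}({\mathbf \Sigma}_i)$ is exactly the toric Reichstein transform of ${\mathcal X}({\mathbf \Sigma}_{i-1})$ relative to the cone $\sigma$; equivalently, the two toric stacks are related by a single toric Reichstein transformation. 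In the reverse orientation, where $\Sigma_{i-1}$ is obtained from $\Sigma_i$ by a star subdivision, we simply swap the roles of $\Sigma_{i-1}$ and $\Sigma_i$: ${\mathcal X}({\mathbf \Sigma}_{i-1})$ is then the Reichstein transform of ${\mathcal X}({\mathbf \Sigma}_i)$, so the two are still related by a toric Reichstein transformation (the statement being symmetric in that it only asserts the existence of a relating transformation, not a preferred direction). Concatenating these, we obtain the full chain ${\mathcal X}({\mathbf \Sigma}_0), \ldots, {\mathcal X}({\mathbf \Sigma}_n)$ with each consecutive pair related by a toric Reichstein transformation, as required.

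The one genuine subtlety — and the step I expect to be the main obstacle — is the compatibility of the stacky structures along the chain. Proposition \ref{prop.subdivision} constructs the lattice vectors $v_i$ step by step, and there is no guarantee a priori that the choice of $(v_1, \ldots, v_n)$ made when passing from $\Sigma_{i-1}$ to $\Sigma_i$ agrees with the choice made when passing from $\Sigma_i$ to $\Sigma_{i+1}$: the former may introduce the new ray vector $v_0$ with a prescribed length forced by the relation $v_0 = v_1 + \cdots + v_l$, while the latter is free to rescale the vectors on rays it does not touch but is constrained on the rays it does. I would handle this by observing that the good moduli space $X(\Sigma_i)$ is independent of the choice of the $v_i$, and that the statement to be proved only concerns the existence of stacky fans ${\mathbf \Sigma}_i$ and the relating transformations — so the stacky structures at intermediate stages are allowed to be whatever the construction dictates, and need not be ``the same'' stack viewed two ways. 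Concretely, at step $i$ we are free to re-choose a stacky structure ${\mathbf \Sigma}_i$ on $\Sigma_i$ compatible with the relation needed for step $i$; the relation needed for the previous step only constrained $v_0$ and the $v_j$ on rays of the relevant cone, and by rescaling we can always arrange a common refinement (or, more simply, just declare ${\mathbf \Sigma}_i$ at step $i$ to be whatever Proposition \ref{prop.subdivision} yields for the transition $\Sigma_i \to \Sigma_{i+1}$, noting that the transition $\Sigma_{i-1} \to \Sigma_i$ only requires \emph{some} compatible pair and we may choose the target of that pair to match). Making this bookkeeping precise — ensuring that at each node a single stacky fan ${\mathbf \Sigma}_i$ serves as the target of one elementary transformation and the source of the next — is the crux; once it is in place, the corollary follows immediately from Proposition \ref{prop.subdivision} and Theorem \ref{thm.technical}.
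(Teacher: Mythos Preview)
Your approach is essentially the paper's --- invoke W{\l}odarczyk's weak factorization and then Proposition \ref{prop.subdivision} step by step --- but you skip a preliminary reduction that the paper makes, and it matters. W{\l}odarczyk's Theorem A in \cite{Wlo:97} is stated for \emph{regular} fans: it connects two nonsingular fans with the same support by a chain of smooth star subdivisions and their inverses. You apply it directly to $\Sigma'$ and $\Sigma''$, which may be singular (indeed non-simplicial). The paper first uses toric resolution of singularities to subdivide each of $\Sigma'$, $\Sigma''$ into a regular fan; by Proposition \ref{prop.subdivision} these resolving subdivisions are themselves compositions of toric Reichstein transforms, so one is reduced to the regular case before invoking \cite{Wlo:97}. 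Inserting this step at the outset fixes the gap, and the rest of your argument goes through.

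Your extended discussion of the compatibility of the stacky structures along the chain is more careful than the paper's treatment, which simply asserts the conclusion. Your resolution is the right one: the statement only asks for \emph{some} sequence of stacky fans $\mathbf{\Sigma}_i$ with each consecutive pair related by a \emph{composition} of toric Reichstein transformations, so at each node you are free to re-choose the lattice vectors $v_j$ as Proposition \ref{prop.subdivision} dictates for the next step, without needing them to agree with the choices from the previous step. The phrase ``composition of toric Reichstein transformations'' absorbs any intermediate rescalings.
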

\begin{proof}
  By toric resolution of singularities the fans $\Sigma'$ and $\Sigma''$
  can be subdivided into regular fans. By Proposition
  \ref{prop.subdivision} these subdivisions correspond to Reichstein
  transformations of toric stacks with underlying fans $\Sigma'$ and
  $\Sigma''$. Hence we are reduced to the case that $\Sigma'$ and
  $\Sigma''$ are regular. By \cite[Theorem A]{Wlo:97} there is a sequence of fans
  $\Sigma_i$ $i=0, \ldots, n$, such that $\Sigma_0=\Sigma'$ and $\Sigma_n=\Sigma''$ and
 $\Sigma_i$ is related to $\Sigma_{i-1}$
  via a sequence of subdivisions. Once again using
  Proposition \ref{prop.subdivision} we may interpret these
  subdivisions in terms of toric Reichstein transforms.
\end{proof}

\subsection{Proof of Theorem \ref{thm.technical}} \label{subsec.proofofthmtechnical}
  Label the rays of $\Sigma$ as $\rho_1, \ldots , \rho_{l},
  \rho_{l+1}, \ldots , \rho_{N}$ with $\rho_1, \ldots \rho_l$ the rays
  of $\sigma$. Let $x_1, \ldots , x_n$ be the corresponding
  coordinates on $\A^n$. 
The linear subspace $L_\sigma$ is cut out by the
  regular sequence $(x_1, \ldots , x_l)$ so the blowup $Y_\s$ is
  embedded as the subvariety $X(\widetilde{\Sigma}) \times \Pro^{l-1}
    \subset
\A^n \times \Pro^{l-1}$
defined by the equations $x_iy_j = x_jy_i$ for
  $1 \leq i < j \leq l$.

Likewise label the rays of $\Sigma_\sigma$ as
$\rho_0 , \rho_1, \ldots , \rho_{l}, \rho_{l+1}, \ldots , \rho_N$ with
$\rho_0 = \rho_\sigma$ (introduced in Definition \ref{def.star}). Let $z_0,z_1, \ldots , z_N$ be the
corresponding coordinates on $\A^{n+1}$.

Observe that for each ray $\rho$ of $\s$, there is a cone $\tau'=\Cone(\tau, \rho_s)$ of $\Sigma_\sigma$, such that $\rho$ does not lie in $\tau'$ and such that $\tau$ is a face of $\s$. Hence not all of $z_1, \ldots, z_l$  vanish on
the open set $X(\widetilde{\Sigma_\sigma}) \subset \A^{n+1}$; i.e. 
$X(\widetilde{{\Sigma_\sigma}}) \subset \A^{n+1} \smallsetminus V(z_1, \ldots , z_l)$.

Define a map
$p \colon \A^{n+1} \smallsetminus V(z_1, \ldots , z_l) \to \A^n \times \Pro^{l-1}$
by the formula
$$(z_0, \ldots , z_n)
\mapsto
(z_0z_1, z_0z_2, \ldots, z_0z_l,z_{l+1}, \ldots , z_n) \times
[z_1: \ldots : z_l].$$
By definition the image of $p$ satisfies the equations $x_iy_j= x_j y_i$ for $1 \leq i < j \leq l$
so it is contained in the blowup $\Bl_{L_\sigma}\A^n$. 
Since not all of $z_1, \ldots , z_l$ vanish
on $\A^{n+1} \smallsetminus V(z_1, \ldots , z_l)$ the action of
$\lambda_0(\CC^*)$ is free and the map $p$ is $\lambda_0(\CC^*)$-invariant.

\begin{lemma}
The map $p \colon \A^{n+1} \smallsetminus V(z_1, \ldots , z_l) \to 
\Bl_{L_\sigma} \A^n$ is a $\lambda_0(\CC^*)$-torsor.
\end{lemma}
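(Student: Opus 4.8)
The plan is to show that $p$ is surjective onto $\Bl_{L_\sigma}\A^n$, that its fibers are exactly the $\lambda_0(\CC^*)$-orbits, and that these orbits are free; since $\lambda_0(\CC^*)\cong\CC^*$ is a torus and everything in sight is a variety with an action of a diagonalizable group, local triviality then follows from the standard fact that a free action of a torus with a geometric quotient is a Zariski-locally trivial torsor (one can exhibit explicit sections on the standard affine charts, which is the cleanest route here). First I would work chart by chart on the target. The blowup $\Bl_{L_\sigma}\A^n$ is covered by the charts $W_k = \{y_k \neq 0\}$ for $1\le k\le l$; on $W_k$ the coordinates are $x_k$ together with $x_i/x_k = y_i/y_k$ for $i\neq k$ (and the remaining $x_{l+1},\ldots,x_n$). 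On the source, the corresponding open set is $V_k = \{z_k \neq 0\}\subset \A^{n+1}\smallsetminus V(z_1,\ldots,z_l)$. I would check directly from the formula for $p$ that $p(V_k)\subseteq W_k$ and compute $p$ in these coordinates: $x_k \mapsto z_0 z_k$, $x_i/x_k \mapsto z_i/z_k$ for $1\le i\le l$, $x_j\mapsto z_j$ for $j>l$. On $W_k$, a section is then given by sending $(x_k, (x_i/x_k)_i, (x_j)_j)$ to the point with $z_k = x_k$, $z_i = x_k\cdot(x_i/x_k)$ for $i\le l$, $z_j = x_j$ for $j>l$, and $z_0 = 1$; one checks this lands in $V_k$ (indeed $z_k = x_k$ which may vanish, but some $z_i$ with $i\le l$ is nonzero precisely because we are on the blowup chart — actually the cleaner normalization is $z_k=1$, $z_i = x_i/x_k$, $z_0 = x_k$, $z_j = x_j$, which manifestly lands in $V_k$). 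This gives a section over each $W_k$, hence a trivialization $p^{-1}(W_k)\cong W_k\times\CC^*$ via the $\lambda_0$-action.

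The remaining points to nail down are: (i) $p$ is $\lambda_0(\CC^*)$-invariant and the fibers are single orbits, and (ii) the action is free. Invariance is immediate from the shape of $\lambda_0(t) = (t^{-1},t,\ldots,t,1,\ldots,1)$: under this, $z_0z_i \mapsto t^{-1}z_0\cdot t z_i = z_0 z_i$ for $1\le i\le l$, the $z_j$ with $j>l$ are fixed, and $[z_1:\cdots:z_l]\mapsto[tz_1:\cdots:tz_l] = [z_1:\cdots:z_l]$. For the fibers: if $p(z) = p(z')$ then $[z_1:\cdots:z_l] = [z_1':\cdots:z_l']$ gives a scalar $t$ with $z_i' = t z_i$ for $i\le l$; matching $z_0z_i = z_0'z_i' = z_0' t z_i$ on a coordinate where $z_i\neq 0$ forces $z_0' = t^{-1}z_0$; and $z_j' = z_j$ for $j>l$. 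So $z' = \lambda_0(t)\cdot z$. Freeness is where I would be slightly careful: $\lambda_0(t)\cdot z = z$ with not all of $z_1,\ldots,z_l$ zero forces $tz_i = z_i$ for some $i$ with $z_i\neq 0$, hence $t=1$. The main (very mild) obstacle is the bookkeeping to confirm that the sections constructed on the $W_k$ genuinely have image inside $\A^{n+1}\smallsetminus V(z_1,\ldots,z_l)$ and patch correctly on overlaps $W_k\cap W_{k'}$; on the overlap the two sections differ by the transition function $(x_k/x_{k'})\in\CC^*$, which is exactly a $\lambda_0(\CC^*)$-valued cocycle, confirming we have a torsor rather than merely a surjection with $\CC^*$-orbits as fibers. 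Finally, surjectivity of $p$ follows since each $W_k$ is hit by the section, and the $W_k$ cover $\Bl_{L_\sigma}\A^n$; one should also note $p$ maps into $\Bl_{L_\sigma}X(\widetilde\Sigma)$ and not just $\Bl_{L_\sigma}\A^n$, but this is automatic because the $x_j = z_j$ for $j > l$ and the $x_iy_j=x_jy_i$ relations are inherited, and $X(\widetilde{\Sigma_\sigma})\subset\A^{n+1}\smallsetminus V(z_1,\ldots,z_l)$ was already observed above. I would close by remarking that since $\lambda_0(\CC^*)$ acts freely with Zariski-local sections, $p$ is a $\lambda_0(\CC^*)$-torsor in the fppf (indeed Zariski) topology, which is the assertion.
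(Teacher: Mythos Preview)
Your proposal is correct and follows essentially the same chart-by-chart strategy as the paper: both arguments work on the standard affine cover $\{y_k\neq 0\}$ of the blowup and verify that over each chart the map is a trivial $\CC^*$-bundle (the paper phrases this as computing the subring of invariants, you phrase it as writing down an explicit section, but these are the same computation). The only cosmetic difference is that the paper first factors $\Bl_{L_\sigma}\A^n \cong \Bl_0\A^l \times \A^{n-l}$ and reduces by base change to the map $\A^{l+1}\smallsetminus V(z_1,\ldots,z_l)\to \Bl_0\A^l$, which trims the bookkeeping but changes nothing substantive.
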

\begin{proof}
Observe that  $\Bl_{L_\sigma} \A^n = \Bl_0 \A^{l} \times \A^{n-l}$ and that the map
$p$ is obtained by base change from the map $q\colon \A^{l+1} \smallsetminus V(z_1, \ldots , z_l) \to 
\Bl_0 \A^{l}$, given by $(z_0, z_1, \ldots , z_l) \mapsto (z_0z_1, \ldots , z_0z_l) \times 
[z_1 \colon \ldots \colon z_l]$. To prove the lemma it suffices to show that $q$ is a $\CC^*$-torsor.
Again the action is free so it suffices to show that $\Bl_0 \A^l$ is the quotient.
To see this cover $\CC^{l+1} \smallsetminus V(z_1, \ldots , z_l)$ by the invariant open
affines $\Spec \CC[z_0, z_1, \ldots , z_l, 1/z_i]$ where $1 \leq i \leq l$. The subring
of invariants is $\Spec \CC[z_0z_1, \dots , z_0z_l, z_1/z_i, \ldots , z_l/z_i]$
which is the natural affine covering of $\Bl_0 \A^l$.
\end{proof}

Recall (Definition \ref{def.reich-l}) that we defined $Y'_\sigma$ to
be the Reichstein transform of $X(\widetilde{\Sigma})$ relative to the
linear subspace $L_\sigma$. We show below in Lemma \ref{lem.torsor} that $p^{-1}(Y'_\sigma) = X(\widetilde{\Sigma_\sigma})$. Theorem \ref{thm.technical} then follows: by Lemma \ref{lem.torsor} and exact sequence \eqref{seq.basicG} the
quotient stack ${\mathcal X}(\mbf{\Sigma_\sigma}):=
[X(\widetilde{\Sigma_\s})/G_{\mbf{\Sigma_\sigma}}]$
is equivalent to the quotient stack $[Y'_{\sigma}/G_{\mbf{\Sigma}}]$.

\begin{lemma} \label{lem.torsor}
$p^{-1}(Y'_\sigma) = X(\widetilde{\Sigma_\sigma})$. Hence
the map $p\vert_{X(\widetilde{\Sigma_\sigma})} \colon X(\widetilde
{\Sigma_\sigma}) \to Y'_\sigma$ is $\lambda_0(\CC^*)$-torsor.
\end{lemma}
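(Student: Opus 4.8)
The plan is to prove the set-theoretic equality $p^{-1}(Y'_\sigma) = X(\widetilde{\Sigma_\sigma})$ by working on the affine cover of $\Bl_{L_\sigma}\A^n$ and comparing explicitly with the fan description of $X(\widetilde{\Sigma_\sigma})$. First I would recall that $Y'_\sigma = Y_\sigma \smallsetminus \tilde{L}_\sigma'$, where $Y_\sigma = \Bl_{L_\sigma} X(\widetilde{\Sigma})$ and $\tilde{L}_\sigma'$ is the strict transform of $\tilde{L}_\sigma = \VV(\tilde{I}_\sigma)$ (Proposition~\ref{prop.ltilde}). Since $X(\widetilde{\Sigma}) = \A^n \smallsetminus Z_\Sigma$, we have $Y_\sigma = \Bl_{L_\sigma}\A^n \smallsetminus p(Z_\Sigma\text{-locus})$, so $Y'_\sigma$ sits inside $\Bl_{L_\sigma}\A^n$ as the complement of (the preimage of) $Z_\Sigma$ together with the strict transform of $\tilde{L}_\sigma$. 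The equality we want therefore breaks into two tasks: (a) show $p^{-1}(Y_\sigma) \cap X(\widetilde{\Sigma_\sigma}) = X(\widetilde{\Sigma_\sigma})$, i.e. the part of $p^{-1}$ removed by deleting $Z_\Sigma$ and the exceptional issues matches the irrelevant locus $Z_{\Sigma_\sigma}$ coming from the cones of $\Sigma$ not containing $\sigma$, and (b) show that removing the strict transform of $\tilde{L}_\sigma$ corresponds exactly to removing the locus cut out by $\tilde{I}_\sigma$ lifted to $\A^{n+1}$, which is the contribution of the \emph{new} cones of $\Sigma_\sigma$ to $Z_{\Sigma_\sigma}$.

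The key computational step is to pull back everything through $p$ in coordinates. On the standard affine chart $U_i$ of $\Bl_{L_\sigma}\A^n$ (where $y_i \neq 0$), the coordinates are $x^{(i)} = (x_1/x_i, \ldots, x_i, \ldots, x_l/x_i, x_{l+1}, \ldots, x_n)$, and under $p$ the function $z_0$ pulls back (up to a unit) to $x_i$ while $z_j$ pulls back to $x_j/x_i$ for $j \le l$, $j\neq i$, and $z_j$ to $x_j$ for $j > l$. The exceptional divisor $\{x_i = 0\}$ on this chart is $\{z_0 = 0\}$. I would then check: the monomial $x^{\hat\mu}$ for a cone $\mu \not\supset \sigma$ pulls back, on each chart, to (a unit times) $z^{\hat\mu}$, where we regard $\mu$ as a cone of $\Sigma_\sigma$ not containing $\rho_0$ — this handles the ``old'' generators of $B_{\Sigma_\sigma}$ and shows task (a). For task (b): by Remark~\ref{rem.maximal} and the description in Definition~\ref{def.itilde}, $\tilde I_\sigma$ is generated on $U_\mu$ ($\mu \supset \sigma$ maximal) by the monomials $x^{\hat\tau}_\mu = \prod_{\rho\in\mu(1)\smallsetminus\tau(1)} x_\rho$ with $\tau$ a face of $\mu$ not containing $\sigma$. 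The strict transform of $\VV(\tilde I_\sigma)$ is obtained by removing the appropriate power of the exceptional divisor; one computes that on each blow-up chart the strict transform is cut out by the monomials $\prod_{\rho\in\mu(1)\smallsetminus\tau(1)} z_\rho$ — and since $\tau' = \Cone(\tau,\rho_0)$ is precisely a new cone of $\Sigma_\sigma$ with $\mu(1)\smallsetminus\tau(1) = (\tau')^{\widehat{}}$-support (the rays of the new maximal cone containing $\tau'$ not in $\tau'$), this is exactly the contribution of the new cones to $Z_{\Sigma_\sigma}$. Combining, the locus removed from $\Bl_{L_\sigma}\A^n$ to form $Y'_\sigma$ pulls back under $p$ to exactly $V(z_1,\ldots,z_l) \cup (\text{lift of } Z_{\Sigma_\sigma})$, giving $p^{-1}(Y'_\sigma) = X(\widetilde{\Sigma_\sigma})$.

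I would organize the bookkeeping by fixing a maximal cone $\mu\supset\sigma$ of $\Sigma$, noting that the maximal cones of $\Sigma_\sigma$ lying over $\mu$ are exactly the $\Cone(\tau,\rho_0)$ for $\tau$ a facet-type face of $\mu$ not containing $\sigma$ (as recorded in the Remark after Definition~\ref{def.star}), and matching these new maximal cones one-to-one with the standard affine charts of the blow-up (restricted to $U_\mu$). On the chart indexed by $i\in\{1,\ldots,l\}$, the preimage under $p$ is $\Spec\CC[z_0,z_1,\ldots,z_n,1/z_i]$, which is exactly the affine open $U_{\mu_i}$ of $X(\widetilde{\Sigma_\sigma})$ corresponding to one of the new maximal cones, once one deletes the further irrelevant locus coming from the remaining cones. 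The last verification—that nothing extra is deleted and nothing is missed—follows because the vanishing loci $\VV(z_\rho : \rho\in J)$ that meet $X(\widetilde{\Sigma_\sigma})$ are exactly those where $J$ spans a cone of $\Sigma_\sigma$, matching the computation in Step~II of the proof of Proposition~\ref{prop.ltilde}.

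The main obstacle I anticipate is task (b): correctly tracking the multiplicity with which the exceptional divisor $\{z_0=0\}$ divides the pullback of each generator $x^{\hat\tau}_\mu$ of $\tilde I_\sigma$, so that the strict transform is computed without error. Concretely one must verify that if $\tau$ is a face of $\mu$ with $\tau\cap\sigma$ a proper face of $\sigma$, then $p^*(x^{\hat\tau}_\mu)$ equals $z_0^{k}$ times $\prod_{\rho\in\mu(1)\smallsetminus\tau(1)} z_\rho$ for the correct exponent $k = |\mu(1)\smallsetminus\tau(1)| \cap \{1,\ldots,l\}$ (the number of rays of $\sigma$ not in $\tau$), and that this $z_0^k$ is precisely the factor stripped off when passing to the strict transform; after that the identification of $\prod_{\rho\in\mu(1)\smallsetminus\tau(1)} z_\rho$ with $z^{\widehat{\tau'}}$ for the new cone $\tau' = \Cone(\tau,\rho_0)$ is bijective by the Remark after Definition~\ref{def.star}. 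Everything else is a routine—if notation-heavy—comparison of the affine cover of $\Bl_{L_\sigma}\A^n$ pulled back through the torsor $p$ with the affine cover of $X(\widetilde{\Sigma_\sigma})$ coming from the cones of $\Sigma_\sigma$.
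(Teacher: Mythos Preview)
Your approach is correct and will work, but it is organized differently from the paper's proof. The paper argues purely pointwise: it proves the two inclusions $p^{-1}(Y'_\sigma)\subset X(\widetilde{\Sigma_\sigma})$ and $p(X(\widetilde{\Sigma_\sigma}))\subset Y'_\sigma$ by taking an arbitrary point ${\bf y}$ (resp.\ ${\bf z}$), splitting into the two cases ``on the exceptional divisor'' and ``off the exceptional divisor,'' and in each case exhibiting a single monomial $z^{\hat{\tau'}}$ (resp.\ $x^{\hat\tau}$) that does not vanish there. Your route is instead ideal-theoretic and chart-by-chart: you pull back the monomial generators of $B_\Sigma$ and of $\tilde I_\sigma$ through $p$, strip off the exceptional factor $z_0^k$ to compute the strict transform, and match the resulting monomials with the generators of $B_{\Sigma_\sigma}$ coming from old and new cones respectively. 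The underlying combinatorics is identical---both arguments hinge on the bijection between faces $\tau$ of $\mu$ not containing $\sigma$ and new cones $\tau'=\Cone(\tau,\rho_0)$ recorded in the Remark after Definition~\ref{def.star}, together with the factorization $x^{\hat\tau}=x^{\hat\mu}x^{\hat\tau}_\mu$---but the paper's pointwise argument is shorter and avoids the bookkeeping you flag as the main obstacle (tracking the exact power of $z_0$). Your approach has the virtue of making the correspondence between the two irrelevant ideals more structurally visible, at the cost of more notation; one small imprecision to fix is that $\prod_{\rho\in\mu(1)\smallsetminus\tau(1)} z_\rho$ only agrees with $z^{\hat{\tau'}}$ up to the unit $\prod_{\rho\notin\mu(1)} z_\rho$ on the chart $U_\mu$, which you should state explicitly.
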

\begin{proof}
First we prove $p^{-1}(Y'_\sigma) \subset X(\widetilde{\Sigma_\sigma})$.
Let ${\mathbf y} = (x_1, \ldots , x_l, x_{l+1}, \ldots , x_N) \times [y_1: \ldots : y_l]$ be a point of $Y'_\sigma$. 
We distinguish two cases.

Case I: ${\bf y}$ is not in the exceptional divisor. In this case 
at least one of the $x_i$ is non-zero and
$p^{-1}({\bf y}) = \{(t,t^{-1}x_1, \ldots t^{-1}x_l,x_{l+1}, \ldots, x_N)|t\neq 0\}$. 
Let ${\bf x} := (x_1, \ldots , x_l, x_{l+1}, \ldots , x_N)$.
Since ${\bf x} \in \A^n \smallsetminus (Z_\Sigma \cup \tilde{L}_\sigma)$
there is a cone
$\mu \subset \Sigma$ such that the function $x^{\hat{\mu}}$ does not vanish at
${\mathbf x}$. If $\mu$ does not contain $\sigma$, then the argument used at the beginning of the proof of Proposition \ref{prop.ltilde} implies that ${\bf x} \notin 
\tilde{L}_\sigma$ as well. Since $\mu$ does not contain $\sigma$ it is also a cone of the fan $\Sigma_\sigma$. Moreover, $t \neq 0$, so the function
$z^{\hat{\mu}}$ is non-zero on $p^{-1}({\bf y})$. Hence
$p^{-1}({\bf y}) \subset X(\widetilde{\Sigma_\sigma})$.

On the other hand, if $\mu$ contains $\sigma$ then the assumption
that ${\bf x} \notin \tilde{L}_\sigma$ implies that there is a face
$\tau \subset \mu$ such that $x^{\hat{\tau}}_\mu$ does not vanish at ${\bf x}$.
Thus there is a cone
$\mu \supset \sigma$ and a face $\tau \subset \mu$ not containing $\mu$
such that the function $x^{\hat{\tau}}_\sigma$ is non-zero at ${\bf x}$. 
Since $x^{\hat{\tau}}=
x^{\hat{\tau}}_\sigma  x^{\hat{\sigma}}$ it follows that
the function $x^{\hat{\tau}}$ also does not vanish at $x$. Let
$\tau' = \Cone(\tau,\rho_0)$ be the corresponding cone of the
fan $\Sigma_\sigma$. Then the function $z^{\hat{\tau'}}$ is non-zero
on $p^{-1}({\mathbf y})$.

Case II: ${\bf y}$  is on the exceptional divisor; i.e. $x_1 = \ldots = x_l=0$. In this case $p^{-1}({\bf y}) = \{(0, y_1^*,\ldots y^*_{l}, x_{l+1}, \ldots , x_N)|[y_1^*: \ldots y_l^*] = [y_1 :\ldots : y_l] \in
\Pro^{l-1}\}$.
Again ${\bf x} = (0,0, \ldots 0, x_{l+1}, \ldots x_n)$ is in
$X(\widetilde{\Sigma})$ so there is a cone $\mu$, 
necessarily
containing $\sigma$ such that $x^{\hat{\mu}}$ is non-zero at ${\mathbf x}$.
Also since ${\bf y}$ is not in the proper transform of $\tilde{L}_\sigma$
we know that ${\bf x^*} = (y_1^*, \ldots y_l^*,x_{l+1}, \ldots , x_n)$ is 
not in
$\tilde{L}_\sigma$. Thus there is a face $\tau$ of $\mu$ such 
the function $x^{\hat{\tau}}_\mu$ does not vanish at ${\bf x}^*$. Since
$x^{\hat{\mu}}$ also does not vanish at ${\bf x}^*$ again we conclude
that $x^{\hat{\tau}}$ is non-zero at ${\bf x}^*$.  If
we let $\tau'= \Cone(\tau, \rho_0)$ then again the function
$z^{\hat{\tau'}}$ is non-zero on $p^{-1}({\bf y})$. 

Therefore $p^{-1}(Y'_\sigma) \subset X(\widetilde{\Sigma_\sigma})$.

The proof that $p(X(\widetilde{\Sigma_\sigma})) \subset
Y'_\sigma$ is similar. 
Let ${\mathbf z} = (z_0, z_1, \ldots , z_l, z_{l+1}, \ldots z_n)$ be point of
$X(\widetilde{\Sigma_\sigma})$.
Again we have two cases.

Case I: $z_0 \neq 0$. Thus $p({\mathbf z})$ is not contained in the exceptional divisor.
In this case it suffices to show that ${\mathbf x} = 
(z_0z_1, \ldots , z_0z_l, z_{l+1}, \ldots z_{n})$ 
is in the complement of $Z_\Sigma \cup \tilde{L}_\sigma$. 
By hypothesis there is a cone $\tau'$ of $\Sigma_\sigma$ such
that $z^{\hat{\tau'}} \neq 0$ at ${\mathbf z}$. If $\tau'$ corresponds to a cone
$\tau$ of $\Sigma$ not containing $\sigma$ then ${\mathbf x} \notin \tilde{L}_\sigma$ and the function $x^{\hat{\tau}}$ doesn't vanish at ${\mathbf x}$, so
$p({\mathbf z}) \in Y'_\sigma$.

On the other hand suppose that $\tau' = \Cone(\tau, \sigma)$. Then
the function $x^{\hat{\tau}} \neq 0$ on $p({\mathbf z})$.
For every cone $\mu$ which contains $\tau$ and $\sigma$ 
we may factor $x^{\hat{\tau}} = x^{\hat{\mu}}x^{\hat{\tau}}_\mu$.
Thus for every $\mu \supset \sigma$, $x^{\hat{\tau}}_\mu$ doesn't
vanish at ${\mathbf x}$ so $p({\mathbf z}) \notin \tilde{L}_\sigma$ as well.

Case II: $z_0 = 0$. Here $p({\mathbf z}) = (0,0,\ldots 0, z_{l+1},
\ldots , z_n) \times [z_1: \ldots : z_l]$ is contained in the exceptional divisor. By hypothesis there is a cone $\tau'$ of $\Sigma_\sigma$ such that
$z^{\hat{\tau'}} \neq 0$ at ${\bf z}$. Since $z_0 =0$ it follows that
$\rho_0$ must be a ray of $\tau'$. Hence $\tau' = \Cone(\tau, \rho_0)$.
Thus for every cone $\mu$ containing $\tau$ the functions
$x^{\hat{\mu}}$ and $x^{\hat{\tau}}_\mu$ do not vanish
at ${\mathbf x} =(z_1, \ldots , z_l, z_{l+1}, \ldots z_n) \in \A^n$.
Hence $p({\mathbf z}) = (0,0, \ldots , 0, z_{l+1}, \ldots , z_n)$ is not in the proper transform of $\tilde{L}_\sigma$. Also for
any cone $\mu$ containing $\tau$ and $\sigma$ the function
$x^{\hat{\mu}}$ doesn't vanish at $(0,0,\ldots 0, z_{l+1}, \ldots z_n)$.
Therefore, $p({\mathbf z}) \in Y'_\sigma$.

This completes the proof of Lemma \ref{lem.torsor}, and thus Theorem \ref{thm.technical}.
\end{proof}

\section{Relation with Kirwan-Reichstein partial desingularization}
 
Kirwan \cite{Kir:85} gave a method for obtaining partial
desingularizations of geometric invariant theory (GIT) quotients of
smooth projective varieties.
Let $X$ be a smooth projective variety with the linearized action of a reductive group $G$. Let $L$ be a $G$-linearized ample line bundle and let 
$X^{ss} = X^{ss}(L)$ and $X^{s} = X^s(L)$ be the sets of semi-stable and stable points.

Kirwan's procedure for partial desingularization can be described as a canonical sequence of Reichstein
transformations.  Framed in this language Kirwan's main result is the
following.
\begin{theorem} \cite{Kir:85}, \cite[Chapter 8, Section 4]{MFK:94}
  With the notation as above let $r$ be the maximum dimension of a
  reductive subgroup of $G$ that fixes a point of $X^{ss}$.

(i) If $r = 0$ then every semi-stable point stable.

Let
  $Z_r \subset X^{ss}$ be the locus of points fixed by a reductive
  subgroup of dimension $r$.

(ii) $Z_r \subset X^{ss}$ is non-singular.

Let $X_{r-1} = {\mathcal R}(X^{ss}, Z_r)$. Then the following hold:

(iii) Any reductive subgroup of $G$ that fixes a point of $X_{r-1}$ has
dimension strictly less than $r$.

(iv) The morphism ${\mathcal R}(X^{ss}, Z_r) \to X^{ss}$ is an isomorphism
over the open set $X^s\subset X^{ss}$.

(v) There exists a $G$-equivariant projective birational morphism $X' \to X$ with $X'$ smooth and such that $X_{r-1} = (X')^{ss}(L')$ for a suitable line
bundle $L'$ on $X'$. In particular there is a projective 
good quotient $X_1/G$ and projective birational morphism $X_1/G \to
X^{ss}/G$ which is an isomorphism over the open set $X^{s}/G \subset X^{ss}/G$.

Hence after iterating this process a finite number of times every
semi-stable point becomes stable and we get a $G$-equivariant
birational morphism $X_0 \to X$ and a smooth DM stack $\mc{X}'=
[X_0/G]$. The projective variety $X_0/G$ has finite quotient
singularities and the map $X_0/G \to X^{ss}/G$ is projective and
birational and an isomorphism over the quotient of the stable locus
$X^s/G \subset X^{ss}/G$.

\end{theorem}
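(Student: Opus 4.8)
The statement is Kirwan's theorem, so the plan is to recast her argument from \cite{Kir:85} (see also \cite[Chapter 8, Section 4]{MFK:94}) in the language of Reichstein transforms, assuming as usual that $X^{s} \neq \emptyset$. The basic local tool throughout is Luna's slice theorem: near a point $x_{0}$ of a closed orbit $G x_{0} \subset X^{ss}$, the pair $(X^{ss}, X)$ is \'etale-locally modeled on $(G \times_{G_{x_{0}}} W^{ss},\, G \times_{G_{x_{0}}} W)$, where $W$ is the linear $G_{x_{0}}$-representation on the normal space to the orbit, and by Matsushima's criterion $G_{x_{0}}$ is reductive because $G x_{0}$ is closed. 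For part (i): if $r = 0$ then $G_{x_{0}}$ is finite for the closed orbit $G x_{0}$ in each fibre of $q$, and the slice model --- now just a representation of a finite group --- shows at once that the orbit through a general semistable point is itself closed with finite stabilizer, i.e.\ every semistable point is stable.

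For part (ii) I would write $Z_{r} = \bigcup_{R} (X^{R} \cap X^{ss})$, the union over connected reductive subgroups $R \subseteq G$ of dimension $r$. Since $X$ is smooth and each $R$ is reductive, every fixed locus $X^{R}$ is smooth; the point is that the relevant pieces are disjoint. Fixing representatives $R_{1}, \dots, R_{k}$ of the conjugacy classes of such subgroups that actually occur as identity components of stabilizers, one shows, using the slice theorem, that $Z^{ss}_{R_{i}} := \{x \in X^{ss} : R_{i} \subseteq G_{x}\}$ is open in $X^{R_{i}}$ hence smooth, that $G Z^{ss}_{R_{i}} \cong G \times_{N(R_{i})} Z^{ss}_{R_{i}}$ is a smooth closed subvariety of $X^{ss}$, and that the $G Z^{ss}_{R_{i}}$ are pairwise disjoint --- a semistable point cannot be fixed by two non-conjugate reductive subgroups of the maximal dimension $r$, since in a slice representation these would generate a strictly larger reductive stabilizer. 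Thus $Z_{r} = \bigsqcup_{i} G Z^{ss}_{R_{i}}$ is smooth, and $Z_{r} \cap X^{s} = \emptyset$ since stable points have finite stabilizer.

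For (iv): the stable locus $X^{s}$ is saturated for $q$ (each stable orbit is closed and so equals its own fibre), so from $Z_{r} \cap X^{s} = \emptyset$ we get $\widetilde{Z_{r}} = q^{-1}(q(Z_{r}))$ disjoint from $X^{s}$ as well (cf.\ the separation property, Theorem \ref{thm.alp}(ii)); hence both the center of the blow-up and the locus removed in forming the Reichstein transform miss $X^{s}$, so $\mc{R}(X^{ss}, Z_{r}) \to X^{ss}$ is an isomorphism over $X^{s}$. For (v): let $C \subset X$ be the smooth $G$-invariant closed subvariety with $C \cap X^{ss} = Z_{r}$ produced in \cite{Kir:85}, set $X' = \Bl_{C} X$ and $L' = L^{\otimes N} \otimes \mathcal{O}_{X'}(-E)$ with $E$ the exceptional divisor and $N \gg 0$; then $L'$ is ample, and Reichstein's theorem (cf.\ \cite[Theorem 2.4]{Rei:89}, quoted above) identifies $(X')^{ss}(L')$ with $\mc{R}(X^{ss}, Z_{r}) = X_{r-1}$. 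Part (iii) --- that no reductive subgroup of dimension $\geq r$ fixes a point of $X_{r-1}$ --- is the technical heart: over a point of $Z_{r}$ the blow-up is modeled by blowing up the fixed locus $W^{R}$ inside the $G_{x_{0}}$-representation $W$, and a direct computation on this linear model shows the dimension of the reductive part of the stabilizer strictly decreases. Finally one iterates: $r$ strictly drops at each step and is bounded, so after finitely many Reichstein transforms one reaches $r = 0$; by (i) every semistable point of the resulting smooth projective $X_{0}$ (with the ample linearization produced along the way) is stable, so $\mc{X}' = [X_{0}/G]$ is a smooth separated Deligne-Mumford stack whose coarse space $X_{0}/G = X_{0}^{ss}/G$ is projective with only finite quotient singularities, and the composite $X_{0}/G \to X^{ss}/G$ is projective, birational, and --- being an isomorphism over $X^{s}/G$ at every stage --- an isomorphism over $X^{s}/G$. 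I expect part (iii), the precise control of how reductive stabilizers degenerate under the blow-up, to be the main obstacle, as it is the step requiring the delicate Luna-slice analysis that occupies most of \cite{Kir:85}.
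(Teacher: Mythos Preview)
The paper does not prove this theorem at all: it is stated with attribution to \cite{Kir:85} and \cite[Chapter 8, Section 4]{MFK:94} purely as background, and the text moves immediately to ``We now come to the main result of this paper --- an analogue of Kirwan's result for toric stacks.'' So there is no proof in the paper to compare your proposal against.

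That said, your sketch is a faithful outline of Kirwan's original argument, recast in the Reichstein-transform language the paper uses. A few minor comments: in (i) you should be more explicit that the Luna slice $G\times_{G_{x_0}}W$ with $G_{x_0}$ finite already forces \emph{every} nearby orbit to be closed with finite stabilizer (not just a general one); in (ii) the disjointness statement and the identification $G Z^{ss}_{R_i}\cong G\times_{N(R_i)}Z^{ss}_{R_i}$ are exactly what Kirwan proves but require some care (one needs that the identity component of the stabilizer is constant along $Z^{ss}_{R_i}$, which uses maximality of $r$); and in (v) you defer the construction of the global smooth center $C\subset X$ with $C\cap X^{ss}=Z_r$ to \cite{Kir:85}, which is fine for a sketch but is itself a nontrivial step. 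Your identification of (iii) as the technical heart is correct and matches where the bulk of the work in \cite{Kir:85} lies.
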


We now come to the main result of this paper - 
an analogue of Kirwan's result for
toric stacks. Since toric varieties need not be projective or quasi-projective, toric
stacks are not necessarily of the form $[X^{ss}/G]$ and so in these cases Kirwan's result does not
apply. In a subsequent paper we will consider analogues of Kirwan's
result for arbitrary quotient stacks which admit good moduli spaces.

\begin{theorem}[Partial desingularization of Artin toric stacks] \label{thm.torickirwan}
Let $\mathbf{\Sigma} = (N,\Sigma, \beta)$ be a stacky fan.  Then there is a simplicial stacky fan ${\mathbf \Sigma'} = (N,\Sigma', \beta')$ and a birational map of toric stacks
${\mathcal X}({\mathbf \Sigma'}) \to {\mathcal X}({\mathbf \Sigma)}$
which induces a proper birational morphism of toric varieties 
$X(\Sigma') \to X(\Sigma)$.

The map ${\mathcal X}({\mathbf \Sigma'}) \to {\mathcal X}({\mathbf
  \Sigma})$ is obtained by a finite sequence of stacky star subdivisions
along minimal non-simplicial cones. In addition, ${\mathcal X}({\mathbf
  \Sigma'})$ is isomorphic to the stack obtained by a finite sequence
of Reichstein transformations of the locus of points maximal
dimensional stabilizer.

If $X(\Sigma)$ is a projective toric variety, then this sequence of
Reichstein transformations is the same as the one in Kirwan's theorem applied
to a weighted projective space.
\end{theorem}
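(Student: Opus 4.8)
The plan is to iterate stacky star subdivision along minimal non-simplicial cones and to translate each step into a Reichstein transform via Theorem \ref{thm.technical}. First I would establish termination by induction on the non-simplicial index $\ns(\Sigma)$ (Definition \ref{def.nonsimplicial}). If $\ns(\Sigma)=0$ the fan is simplicial and there is nothing to do. Otherwise set $k=\ns(\Sigma)>0$ and let $\sigma_1,\dots,\sigma_m$ be the minimal non-simplicial cones of $\Sigma$; by Lemma \ref{lemma.ns3} no two of them lie in a common cone, so the stacky star subdivisions along $\sigma_1,\dots,\sigma_m$ may be carried out one after another, each along a cone that is still a minimal non-simplicial cone (again by Lemma \ref{lemma.ns3}) and each leaving the later $\sigma_j$, and every cone containing one, untouched. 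I claim the resulting fan $\Sigma''$ has $\ns(\Sigma'')<k$: by Lemma \ref{ns-drops} every newly created cone has index $<k$, while every cone of $\Sigma$ of index exactly $k$ contains, by Lemma \ref{lemma.ns2} (using that a cone of a fan contained in another cone is a face of it), one of the $\sigma_i$ as a face and is therefore subdivided. Induction on $k$ then yields a simplicial stacky fan ${\mathbf\Sigma}'=(N,\Sigma',\beta')$ reached from ${\mathbf\Sigma}$ by finitely many stacky star subdivisions along minimal non-simplicial cones.

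Next I would package the geometry. Since $\Sigma'$ is simplicial, $\mathcal X({\mathbf\Sigma}')$ is Deligne--Mumford. By Theorem \ref{thm.technical} each stacky star subdivision along $\sigma$ realizes $\mathcal X(\mathbf{\Sigma}_\sigma)\simeq[Y'_\sigma/G_{\mathbf\Sigma}]$, and the induced representable morphism $[Y'_\sigma/G_{\mathbf\Sigma}]\to\mathcal X({\mathbf\Sigma})$ is birational, being an isomorphism over the $G_{\mathbf\Sigma}$-invariant dense open $X(\widetilde\Sigma)\smallsetminus\widetilde L_\sigma$. On good moduli spaces the induced toric morphism is the star subdivision $X(\Sigma_\sigma)\to X(\Sigma)$ attached to the fan map $\Sigma_\sigma\to\Sigma$; since $\rho_0$ lies in the relative interior of $\sigma$ (here $\dim\sigma\ge 2$, as $\ns(\sigma)>0$) this is a proper birational morphism. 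Composing the finitely many steps produces the birational map $\mathcal X({\mathbf\Sigma}')\to\mathcal X({\mathbf\Sigma})$ and the proper birational morphism $X(\Sigma')\to X(\Sigma)$ asserted in the first part of the theorem.

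For the reinterpretation via maximal stabilizers I would compute, as in Lemma \ref{stabilizer-size}, that a point of $X(\widetilde\Sigma)$ whose vanishing coordinates are indexed by $J\subseteq\Sigma(1)$ has $G_{\mathbf\Sigma}$-stabilizer of rank $|J|-\dim\operatorname{span}\{v_\rho\mid\rho\in J\}$ (which equals $\ns(\sigma)$ when $J=\sigma(1)$). Such a point lies in $X(\widetilde\Sigma)$ only if $J\subseteq\mu(1)$ for some cone $\mu$, and enlarging $J$ to $\mu(1)$ raises the span dimension by at most one per added ray, so the stabilizer rank is at most $\ns(\mu)\le\ns(\Sigma)=k$; conversely every point of $L_{\sigma_i}$ attains rank $k$. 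Using Lemma \ref{lemma.ns2} one then checks that the locus of maximal stabilizer rank in $\mathcal X({\mathbf\Sigma})$ is exactly $\bigsqcup_i[L_{\sigma_i}/G_{\mathbf\Sigma}]$; the pieces are disjoint because by Lemma \ref{lemma.ns3} the $L_{\sigma_i}$ (and hence, by Theorem \ref{thm.alp}(ii), the saturations $\widetilde L_{\sigma_i}$) are pairwise disjoint. Consequently the Reichstein transform relative to this whole locus factors as the composition of the Reichstein transforms relative to the individual $[L_{\sigma_i}/G_{\mathbf\Sigma}]$, i.e. as the iterated stacky star subdivision along $\sigma_1,\dots,\sigma_m$; passing to the next stage (whose minimal non-simplicial cones are precisely those $\sigma_j$ not touched before) and iterating gives the description of $\mathcal X({\mathbf\Sigma}')$ as a finite sequence of Reichstein transformations of the maximal-stabilizer locus.

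Finally, for the projective case I would use the Cox construction together with projectivity of $X(\Sigma)$ to present $\mathcal X({\mathbf\Sigma})$ as a geometric invariant theory quotient stack: choosing an ample class on $X(\Sigma)$ as polarization and a one-parameter subgroup of $G_{\mathbf\Sigma}$ acting with positive weights on every coordinate, one obtains $X(\widetilde\Sigma)=(\A^n)^{ss}=W^{ss}$ for a weighted projective space $W=\Pro(a_1,\dots,a_n)$ under the residual torus $G_{\mathbf\Sigma}/\CC^*$, with $X(\Sigma)=W^{ss}/(G_{\mathbf\Sigma}/\CC^*)$. As recalled at the start of this section, Kirwan's partial desingularization is the canonical sequence of Reichstein transformations relative to the loci of points fixed by a reductive subgroup of maximal dimension; by the previous paragraph these loci are exactly the substacks $[L_{\sigma_i}/G_{\mathbf\Sigma}]$, so Kirwan's sequence and the one constructed here agree step by step. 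The hard part will be the identification of the maximal-stabilizer locus with $\bigsqcup_i[L_{\sigma_i}/G_{\mathbf\Sigma}]$ --- in particular showing that a point of maximal stabilizer rank has vanishing-coordinate set containing the rays of some minimal non-simplicial cone --- together with making the weighted-projective presentation precise enough to feed into Kirwan's setup, which is stated for smooth projective varieties and so requires dealing with the orbifold nature of $W$, for instance by working throughout with the quotient-stack presentation $[\A^n\smallsetminus\{0\}\,/\,\CC^*]$ of $W$.
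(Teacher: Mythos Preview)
Your proposal is correct and follows essentially the same approach as the paper: induction on $\ns(\Sigma)$, using Lemmas \ref{lemma.ns2}, \ref{lemma.ns3}, and \ref{ns-drops} to handle the minimal non-simplicial cones and their stacky star subdivisions, Theorem \ref{thm.technical} to identify each step with a Reichstein transform, Lemma \ref{stabilizer-size} to pin down the maximal-stabilizer locus, and the Cox/GIT presentation as a weighted projective space quotient for the projective comparison with Kirwan. Your argument for disjointness of the $\widetilde L_{\sigma_i}$ via Theorem \ref{thm.alp}(ii) and your explicit check that surviving old cones have index $<k$ are slightly more detailed than the paper's, but the architecture is the same.
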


\begin{proof}

We proceed by induction on the non-simplicial index $\ns(\Sigma)$ (Definition \ref{def.nonsimplicial}).   
If $\ns(\Sigma)=0$, then $\Sigma$ is simplicial, and there is nothing to prove. 

Assume $\ns(\Sigma)>0$. Let $S=S_\Sigma$ be the set of minimal
non-simplicial cones of $\Sigma$ (Definition
\ref{def.nonsimplicial}). For $\s \in \Sigma$, recall (Definitions
\ref{def.l}, \ref{def.tilde-ls}) that $L_\s=\mathbb{V}(x_\rho \mid
\rho \in \s(1)) \cap X(\widetilde{\Sigma})$ and
$\tilde{L}_{\s}=q^{-1}({q(L_\s)})=q^{-1}(V(\s))$ where
$q:{X}(\widetilde{\Sigma}) \to X(\Sigma)$ is the quotient map.

Note that for any two distinct elements $\s, \s' \in S$, we have
$V(\s) \cap V(\s')= \emptyset$: if $p \in V(\s) \cap V(\s')$ then by
the orbit-cone correspondence (Section \ref{section.toric}, equation
\ref{eq.orbit-cone-corr}) $p$ is in the orbit $O(\tau)$ for a cone
$\tau$ containing $\s$ and $\s'$, but by Lemma \ref{lemma.ns3}, no
such cone $\tau$ exists.

It follows that for any two distinct elements $\s, \s' \in S$,
$\tilde{L}_{\s}$ and $\tilde{L}_{\s'}$ are disjoint (since if $p$ were
a common point, then $q(p) \in V(\s) \cap V(\s')$). Let
${L}_S=\cup_{\s \in S}{L}_\s$, and let $\tilde{L}_S=\cup_{\s \in S}
\tilde{L}_\s$; both of these unions are in fact disjoint unions.  The following calculation shows
that $\tilde{L}_S=q^{-1}({q(L_S)})$: $${q(L_S)} = q(\coprod_{\s \in
    S}{L}_\s)=\coprod_{\s \in S}{q({L}_\s)}=\coprod_{\s \in S}
V(\s)$$
$$q^{-1}({q(L_S)})=q^{-1}(\coprod_{\s \in S} V(\s))=\coprod_{\s \in S} q^{-1}(V(\s))=\coprod_{\s \in S} \tilde{L}_\s=\tilde{L}_S.$$

Let $Y_S$ be the blowup of $\mc{X}(\Sigma)$ along ${L}_S$, and let
$Y_S'$ be the complement in $Y_S$ of the strict transform of
$\tilde{L}_S$. In other words, $Y_S'$ is the Reichstein transform of
$L_S \subseteq \mc{X}(\Sigma)$. By Lemma \ref{stabilizer-size}, the
general point of $L_S$ has $G_{\mbf{\Sigma}}$-stabilizer of rank
$\ns(\Sigma)$, and by the same lemma, $L_S$ is the closure of the locus
of maximal dimensional stabilizer.

Since the $\tilde{L}_\s$ are disjoint as $\s \in S$ varies, $Y_S'$ is
isomorphic to successively blowing up $\mc{X}(\Sigma)$ along each
$L_\s$ and removing the strict transform of $\tilde{L}_\s$, as $\s \in
S$ varies. Let $\mbf{\Sigma_S}$ be the stacky fan formed by starting
with $\mbf{\Sigma}$ and stacky star subdividing (Definition
\ref{def.star}) each cone $\s \in S$. By repeatedly applying
Theorem \ref{thm.technical}, we conclude that
$[Y_S'/G_{\mbf{\Sigma}}]$ is isomorphic to the toric stack
$\mc{X}(\mbf{\Sigma_S})$. By Lemma \ref{ns-drops}, we have
$\ns(\Sigma_S) < \ns(\Sigma)$, and the inductive step is complete.

If $X(\Sigma)$ is projective, then by \cite[Prop. 14.1.9]{CLS:10} there is a $G=G_{\mbf{\Sigma}}$-equivariant line bundle $L_{{\mathbf a}}$
on $\A^n$ (where $n=\vert \Sigma(1)\vert$) such that $X(\tilde{\Sigma}) = \A^n(L_{{\mathbf
    a}})^{ss}$. Hence the argument of \cite[Proposition 12.2]{Dol:03}
expresses $X(\Sigma)$ as a GIT quotient of a weighted projective
space.  Hence we can apply Kirwan's result. Kirwan's algorithm
proceeds by successively taking the Reichstein transforms of the locus
of maximal dimensional stabilizer, and hence our algorithm agrees with
Kirwan's in the projective toric case.
\end{proof}

\section{Divisorial Reichstein transformations and change of
  linearizations} \label{sec.divreich} Recall \cite{Tha:96, DoHu:98}
that if $X$ is projective variety with the action of a reductive group
$G$ then the cone of $G$-linearized ample divisors is divided into a
finite number of chambers such that the GIT quotient is constant on
the interior of each chamber. In many (but not all) examples if $L$ is
a line bundle corresponding to a point on a wall between two chambers
then $X^{ss}(L) \neq X^{s}(L)$, but if $L$ is deformed to a line
bundle $L'$ in the interior of a chamber then $X^{s}(L') = X^{ss}(L')
\subset X^{ss}(L)$.  From the stack point of view this means that the
non-separated quotient stack $[X^{ss}(L)/G]$ with complete good moduli
space $X^{ss}(L)/G$ contains the complete Deligne-Mumford (DM) open
substack $[X^{s}(L')/G]$. The induced map of quotients $X^{s}(L')/G
\to X^{ss}(L)/G$ is proper and birational.  Our final result shows we
can use Reichstein transformations relative to divisors to find
complete open DM substacks of Artin toric stacks with complete good
moduli space. In our subsequent paper we will further investigate the
relationship between divisorial Reichstein transformations and changes
of linearizations.

\begin{theorem} \label{thm.divsub}
Let ${\mathbf \Sigma} = (N, \Sigma, (v_1, \ldots , v_n))$ be a stacky
fan  and let ${\mathcal X}({\mathbf \Sigma})$ be the associated toric
stack.
Then there exists a sequence of divisorial toric Reichstein transforms relative to rays in $\Sigma$ such that the resulting
toric stack ${\mathcal X}({\mathbf \Sigma'})$ is Deligne-Mumford.
In particular if the toric variety $X(\Sigma)$ is complete,
then the toric stack ${\mathcal X}({\mathbf \Sigma})$ contains a
complete open Deligne-Mumford substack ${\mathcal X}({\mathbf \Sigma'})$ such that the induced map on
moduli spaces $X(\Sigma') \to X(\Sigma)$ is proper and birational.
\end{theorem}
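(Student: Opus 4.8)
The plan is to reduce the statement to a purely combinatorial fact about refining fans by star subdivisions along rays that already belong to the fan, and then translate that refinement into a sequence of divisorial Reichstein transforms by invoking Theorem~\ref{thm.technical} in the special case where the cone $\s$ is a ray.

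The combinatorial input I would establish first is: \emph{every fan $\Sigma$ admits a simplicial refinement $\Sigma'$ with $\Sigma'(1)=\Sigma(1)$, obtained by a finite sequence of star subdivisions, each one performed along a ray of the fan then under consideration.} This is essentially classical, but is easily proved directly with the non-simplicial index. One inducts on $d_0$, the least dimension of a non-simplicial cone of $\Sigma$ (there is nothing to do once $\Sigma$ is simplicial), and, for fixed $d_0$, on the number of $d_0$-dimensional non-simplicial cones. Pick such a cone $\mu$ of dimension $d_0$ and any ray $\rho\in\mu(1)$, and form the star subdivision along $\rho$. No new ray is created, and every new cone has the form $\Cone(\tau,\rho)$ with $\tau$ a face, not containing $\rho$, of some cone of $\Sigma$; since $\rho$ is not in the linear span of $\tau$ one gets $\ns(\Cone(\tau,\rho))=\ns(\tau)$. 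By minimality of $d_0$, every such $\tau$ with $\dim\Cone(\tau,\rho)\le d_0$ is simplicial, so no non-simplicial cone of dimension $\le d_0$ is produced; and each old $d_0$-dimensional non-simplicial cone containing $\rho$---in particular $\mu$ itself---is subdivided into the cones $\Cone(\text{facet},\rho)$, which are simplicial. Hence the inductive measure strictly drops and the process terminates at a simplicial fan. (Lemmas~\ref{lemma.ns2} and~\ref{lemma.ns3} are convenient here for keeping track of which faces carry the extremal index.)

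The translation is then essentially formal. A star subdivision along a ray $\rho$ of the current fan introduces no new ray, so its stacky version keeps the generators $v_1,\dots,v_n$---the auxiliary vector $v_0$ of Definition~\ref{def.star} is just the existing generator on $\rho$---and the acting group $G_{\mathbf{\Sigma}}$ is unchanged. For such a $\rho$ the subspace $L_\rho$ of Definition~\ref{def.l} is a coordinate hyperplane in the Cox space, hence a Cartier divisor, so the toric Reichstein transform relative to $\rho$ is divisorial and, being the transform of a Cartier divisor, is an open immersion (see the remarks following Definition~\ref{def.reich}). Theorem~\ref{thm.technical} with $\s=\rho$ identifies this open substack with the toric stack of the star subdivision. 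Iterating along the sequence of rays furnished by the combinatorial lemma therefore exhibits, inside $\mathcal{X}(\mathbf{\Sigma})$, an open substack $\mathcal{X}(\mathbf{\Sigma'})$ with $\Sigma'$ simplicial; since a simplicial stacky fan gives a Deligne-Mumford stack, this proves the first assertion.

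For the last statement, star subdivision preserves the support of a fan, so if $X(\Sigma)$ is complete then so is $X(\Sigma')$; the identity of $N$ is compatible with $\Sigma'$ and $\Sigma$ and induces a proper (it is a refinement) birational (an isomorphism over the torus) toric morphism $X(\Sigma')\to X(\Sigma)$, which is precisely the map of moduli spaces induced by $\mathcal{X}(\mathbf{\Sigma'})\to\mathcal{X}(\mathbf{\Sigma})$. The separated Deligne-Mumford stack $\mathcal{X}(\mathbf{\Sigma'})$ (a toric Deligne-Mumford stack in the sense of \cite{BCS:05}) has complete coarse moduli space $X(\Sigma')$ and is therefore complete, so it is the required complete open Deligne-Mumford substack of $\mathcal{X}(\mathbf{\Sigma})$. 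The one place that requires genuine care is the termination of the combinatorial reduction: one must always subdivide along a ray of a \emph{minimal-dimensional} non-simplicial cone, and then check, via the non-simplicial index, that the chosen complexity measure really decreases; granting that, the passage to Reichstein transforms and the completeness bookkeeping are routine.
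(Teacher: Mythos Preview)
Your argument is correct and follows the same two-step strategy as the paper: establish the combinatorial fact that any fan can be made simplicial by a finite sequence of star subdivisions along its own rays, and then interpret each such subdivision as a divisorial Reichstein transform. The only real difference is in the first step: the paper's proof dispatches it in one line by citing \cite[Proposition~11.1.7]{CLS:10}, whereas you supply a self-contained inductive proof using the non-simplicial-index machinery of Section~3.4. This makes your version longer but internally self-sufficient and nicely reuses tools the paper has already set up. One small wrinkle worth noting in both treatments: when $\sigma$ is a ray, Definition~\ref{def.star} as written would produce a stacky fan with $n+1$ generators on only $n$ rays, so invoking Theorem~\ref{thm.technical} literally requires a word of reinterpretation; the paper handles this implicitly by taking $\mathbf{\Sigma}'=(N,\Sigma',(v_1,\dots,v_n))$ with the original generator tuple, exactly as you do.
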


\begin{proof}
Our result follows from \cite[Proposition 11.1.7]{CLS:10}
which says that any fan can be made simplicial by a sequence of star
subdivisions relative to the rays of the fan. Let $\Sigma'$ be a fan
obtained from $\Sigma$ by subdividing relative to a sequence of 
rays $\rho_1, \ldots , \rho_l$. If ${\mathbf \Sigma}' = (N, \Sigma',
(v_1, \ldots , v_n))$ then the toric stack ${\mathcal X}({\mathbf
  \Sigma}')$
is obtained from ${\mathcal X}({\mathbf \Sigma})$ by a sequence of
divisorial Reichstein transformations.
\end{proof}

\begin{remark}
  If $X(\Sigma)$ is projective then $X(\widetilde{\Sigma}) =
  (\A^n)^{ss}(\chi)$ for some character $\chi \in \widehat{G_{\mathbf
      \Sigma}}$. If ${\mathcal X}({\mathbf \Sigma}') \subset {\mathcal
    X}({\mathbf \Sigma})$ is an open toric substack obtained by a
  divisorial Reichstein transform, then $X(\widetilde{\Sigma'}) =
  (\A^n)^{ss}(\chi')$ for some other character $\chi' \in
  \widehat{G_{\mathbf \Sigma}}$. Thus the toric variety $X(\Sigma')$
  is obtained by a change of linearization for the $G_{\mathbf \Sigma}
  = G_{\mathbf \Sigma'}$ action on $\A^n$.
\end{remark}

If $G \subset (\CC^*)^n$ and $\chi \in \widehat{G}$ is a character then
the quotient $(\A^n)^{ss}(\chi)/G$ is again a toric variety although
the stack $[(\A^n)^{ss}/G]$ need not be a toric stack in our sense of
the term. As $\chi$ varies through $\widehat{G}$ the different quotients
are related by birational transformations. As the next example shows, these transformations can also be interpreted
in terms of divisorial Reichstein transforms.
\begin{example}
The group
$G=\{(t, t^{-1}u, t, u) \vert t,u \in \CCe\} \subseteq (\CCe)^4$
acts on $\A^4=\Spec \CC[x_1, x_2, x_3, x_4]$ in the natural way.
Since $G \simeq (\CC^*)^2$ the character group $\widehat{G}$ is the lattice $\ZZ^2$. As noted in  \cite[Example 14.3.7]{CLS:10}
the GIT quotient $\A^4//_{\chi}G$ is non-empty if and only $\chi$
lies in the cone spanned by $(-1,1)$ and $(1,0)$.
This cone is divided into two chambers separated by the wall spanned by the ray through $(0,1)$.
In the interior of the left chamber, as well as on the
wall, the quotients are all isomorphic to $\Pro^2$, and in the interior of the right
chamber the quotients are all isomorphic to Hirzebruch surface  $H_1$.
 
The point $\beta=(0,1)$ lies on the wall and the GIT
quotient $\A^4 //_{(0,1)} G$, is equal to $\Proj \CC[x_1x_2, x_2x_3,
x_4] \simeq \Pro^2$. This is the good quotient of 
$$X=\A^4 \smallsetminus \left(\VV(x_1, x_3, x_4)
\bigcup \VV(x_2, x_4)\right)$$ by $G$.  
Note however, that the
quotient stack $[X/G]$ is not a toric stack in the sense of this
paper, although it is a toric stack in the sense of \cite{Sat:09}.
The reason is that $[X/G]$ is not a DM stack (since $(0,0,0,1)
\in X$ has a 1-dimensional stabilizer) but the moduli space of $[X/G]$
is a simplicial toric variety.

Consider the point $(-1,2)$ in the interior of the left
chamber.  The semi-stable locus relative to this character is $\A^4
\smallsetminus \left(\VV(x_1,x_3,x_4) \bigcup \VV(x_2)\right)$ and the GIT
quotient $\A^4//_{(-1,2)}G$ is $\Pro^2$.  The Reichstein transform of
$X$ relative to the divisor $x_1=0$ is 
$\A^4
\smallsetminus \left(\VV(x_1,x_3,x_4) \bigcup \VV(x_2)\right)$
since the saturation (relative
to the quotient map $q \colon X \to X/G$) of the divisor $x_1=0$ is
$\VV(x_1)\bigcup \VV(x_2)$. 
Thus the Reichstein transform of the
non-toric stack $[X/G]$ relative to divisor $[\VV(x_0)/G]$ is the
representable toric stack $[\left(\A^4 \smallsetminus
  (\VV(x_1,x_3,x_4) \bigcup \VV(x_2))\right)/G]$ which is
represented by the smooth toric variety $\Pro^2$.

Now take the point $(1,1)$ in the right chamber. The semi-stable locus
relative to this character is $\A^4\smallsetminus (\VV(x_1, x_3)
\bigcup \VV(x_2, x_4))$. The GIT quotient $\A^4//_{(1,1)}G$ is
$H_1$.  The Reichstein transform of $X$ relative to the divisor
$x_2=0$ is 
$$\A^4\smallsetminus \left( \VV(x_1, x_3)\bigcup \VV(x_2,x_4)\right)$$ 
since the saturation of the
divisor $x_2=0$ is $\VV(x_1, x_3)\bigcup \VV(x_2)$. Thus we see again that a
divisorial Reichstein transform of the non-toric stack $[X/G]$
produces the new GIT quotient. The quotient stack
$[\A^4((1,1))^{ss}/G]$ is again a representable toric stack which is
represented by the toric variety $H_1$.

This example will be generalized in a subsequent paper.
\end{example}

Our final example gives a complete open
DM substack $\mathcal{X}(\Sigma'')$ of a Cox stack
$\mathcal{X}(\Sigma)$ with projective moduli space $X(\Sigma)$ that
cannot be formed by a sequence of Reichstein transforms starting with
$\mathcal{X}(\Sigma)$. However $\mathcal{X}(\Sigma'')$ is the blowdown
(of a Reichstein transform) of a stack that is the result of a
sequence of Reichstein transforms starting with $\mathcal{X}(\Sigma)$.

This example shows that there are projective or quasi-projective varieties $X^{ss}(L)/G$ such
that $X^{ss}(L)$ contains open sets which have good geometric quotients
but which are not GIT quotients. These quotients do not fit into the
chamber decomposition description of \cite{Tha:96, DoHu:98}.

\begin{example} \label{ex.secretchamber} 
We begin by copying verbatim
  the setup of \cite[Example 6.1.17]{CLS:10}. The fan for $\Pro^1
  \times \Pro^1 \times\Pro^1$ has the eight orthants of $\mathbb{R}^3$
  as its maximal cones, and the ray generators are $\pm e_1,\pm e_2,
  \pm e_3$. Take the positive orthant $\mathbb{R}^3_{\geq 0}$ and
  subdivide it further by adding new ray generators $$a=(2,1,1), \
  b=(1,2,1), \ c=(1,1,2), \ d=(1,1,1).$$ We obtain a complete fan
  $\Sigma$ by filling the first orthant with the maximal
  cones $$\sigma_1=\langle e_1,e_2, a, b\rangle, \ \sigma_2=\langle
  e_2, e_3, b, c\rangle, \ \sigma_3=\langle e_1, e_3, a, c\rangle, \
  \langle a, b, d \rangle,\ \langle b,c,d \rangle, \ \langle a, c, d
  \rangle .$$ The complete toric variety $X(\Sigma)$ is a projective
  variety (a calculation shows that the divisor $D=16(D_1 + D_2 + D_3)
  + 56(D_a + D_b+D_c) + 41D_d$ on $X(\Sigma)$ is ample.)  In the
  positive orthant, the intersection of the plane $x+y+z=1$ with
  $\Sigma$ looks like: $\xymatrix{
    &&e_3 \ar@{-}[ddddll]\ar@{-}[d]\ar@{-}[ddddrr]\\
    &&c\ar@{-}[ddl]\ar@{-}[d]\ar@{-}[ddr] \\
    &&d \ar@{-}[dl]\ar@{-}[dr]\\
    &a \ar@{-}[rr]\ar@{-}[dl] &&b \ar@{-}[dr]\\
    e_1 \ar@{-}[rrrr] &&&&e_2\\
  }$

  If we perform a star subdivision of $\Sigma$ with respect to the ray
  through $a$, followed by a star subdivision by the ray through $b$,
  we get a fan $\Sigma'$ formed from by $\Sigma$ by dividing each of
  the maximal cones $\sigma_1, \sigma_2, \sigma_3 \in \Sigma$ into two
  maximal cones separated by the facets $\langle a, e_2\rangle,\
  \langle b, e_3\rangle,\ \langle a, e_3\rangle$ respectively. The
  associated Cox stack $\mathcal{X}(\Sigma')$ is a complete open DM
  substack of the Cox stack $\mathcal{X}(\Sigma')$, and
  $\mathcal{X}(\Sigma')$ is isomorphic to a sequence of divisorial
  Reichstein transforms starting with
  $\mathcal{X}(\Sigma)$. Furthermore, we have a projective birational
  morphism $X(\Sigma') \to X(\Sigma)$, since a star subdivision of a
  fan gives rise to a projective morphism on the level of toric
  varieties (\cite[Theorem 11.1.6]{CLS:10}).

In the positive orthant, the intersection of the plane $x+y+z=1$ with $\Sigma'$ looks like:
$\xymatrix{
&&e_3 \ar@{-}[ddddll]\ar@{-}[d]\ar@{-}[ddddrr]\ar@{-}[dddl]\ar@{-}[dddr]\\
&&c\ar@{-}[ddl]\ar@{-}[d]\ar@{-}[ddr] \\
&&d \ar@{-}[dl]\ar@{-}[dr]\\
&a \ar@{-}[rr]\ar@{-}[drrr]\ar@{-}[dl] &&b \ar@{-}[dr]\\
e_1 \ar@{-}[rrrr] &&&&e_2\\
}$

However, not all complete open DM substacks of $\mathcal{X}(\Sigma)$
can be reached by Reichstein transforms of $\mathcal{X}(\Sigma)$. Form
the fan $\Sigma''$ from $\Sigma'$ flipping the facet subdividing
$\sigma_3$ from $\langle a, e_3\rangle$ to $\langle c, e_1\rangle$. 

In the positive
orthant, the intersection of the plane $x+y+z=1$ with $\Sigma''$ looks
like: $\xymatrix{
  &&e_3 \ar@{-}[ddddll]\ar@{-}[d]\ar@{-}[ddddrr]\ar@{-}[dddr]\\
  &&c\ar@{-}[ddl]\ar@{-}[d]\ar@{-}[ddr]\ar@{-}[dddll] \\
  &&d \ar@{-}[dl]\ar@{-}[dr]\\
  &a \ar@{-}[rr]\ar@{-}[drrr]\ar@{-}[dl] &&b \ar@{-}[dr]\\
  e_1 \ar@{-}[rrrr] &&&&e_2\\
}$

Since the toric variety $X(\Sigma'')$ is not projective
\cite[Example 6.1.17]{CLS:10}  $X(\Sigma'')$ cannot
be obtained from $X(\Sigma)$ by star subdivisions.
Hence the Cox stack $\mathcal{X}(\Sigma'')$ is a
complete open Deligne-Mumford substack of $\mathcal{X}(\Sigma)$ which 
cannot be reached by Reichstein transforms of $\mathcal{X}(\Sigma)$.  
\end{example}

\def\cprime{$'$}
\def\cprime{$'$}

\end{document}